\newcommand{\C}{\mathbb{C}}
\newcommand{\Q}{\mathbb{Q}}
\newcommand{\Z}{\mathbb{Z}}
\newcommand{\R}{\mathbb{R}}
\newcommand{\op}{\operatorname}
\newtheorem{theorem}{Theorem}[section]
\newtheorem{remark}[theorem]{Remark}
\newtheorem{conjecture}[theorem]{Conjecture}
\newtheorem{corollary}[theorem]{Corollary}
\newtheorem{proposition}[theorem]{Proposition}
\newtheorem{lemma}[theorem]{Lemma}
\newtheorem{definition}[theorem]{Definition}
\newtheorem{definition/lemma}[theorem]{Definition/Lemma}
\title{Weight polytopes and saturation of Demazure characters}
\author{Marc Besson}
\address{BICMR, Peking University, Beijing, P.R. China, 100871}
\email{marc@bicmr.pku.edu.cn}
\author{Sam Jeralds}
\address{University of Queensland, St. Lucia, QLD 4067}
\email{s.jeralds@uq.edu.au}
\author{Joshua Kiers}
\address{Marian University, 3200 Cold Spring Rd, Indianapolis, IN 46222}
\email{jkiers@marian.edu}
\begin{document}
\maketitle
\begin{abstract}
For $G$ a reductive group and $T\subset B$ a maximal torus and Borel subgroup, Demazure modules are certain $B$-submodules, indexed by elements of the Weyl group, of the finite irreducible representations of $G$. In order to describe the $T$-weight spaces that appear in a Demazure module, we study the convex hull of these weights -- the Demazure polytope. We characterize these polytopes both by vertices and by inequalities, and we use these results to prove that Demazure characters are saturated, in the case that $G$ is simple of classical Lie type. Specializing to $G=GL_n$, we recover results of Fink, M\'esz\'aros, and St. Dizier, and separately Fan and Guo, on key polynomials, originally conjectured by Monical, Tokcan, and Yong. 
\end{abstract}

\section{Introduction}

Let $G$ be a complex, reductive algebraic group, with fixed maximal torus and Borel subgroup $T$ and $B$, respectively. The finite-dimensional irreducible representations of $G$ are parametrized by the dominant integral weights $\lambda$. Each such representation $V_\lambda$ is also a representation of $B$, indecomposable but not irreducible. A special class of $B$-submodules, first introduced by Demazure \cite{Dem1, Dem2}, are indexed by Weyl group elements $w\in W$ as follows. First, let $v_\lambda$ denote any nonzero highest-weight vector in $V_\lambda$. Then define the \emph{Demazure module}
$$
V_\lambda^w:=\text{the smallest $B$-submodule of $V_\lambda$ containing $wv_\lambda$};
$$
we give an explicit algebraic construction of these modules in Section \ref{DemMod}. When $w=w_0$, the longest element in $W$, one recovers $V_\lambda^{w_0} = V_\lambda$ (as $B$-modules). 

A natural question is how the $B$-module $V_\lambda^w$ restricts as a $T$-module. Specifically, for any character $\mu:T\to \C^*$, define 
$$
V_\lambda^w(\mu):=\{v\in V_\lambda^w : tv = \mu(t)v~\forall t\in T\};
$$
then the questions at hand are: 
\begin{enumerate}[label=(\alph*)]
\item for which characters $\mu$ is $V_\lambda^w(\mu)\ne 0$? 
\item if nontrivial, what is the dimension of $V_\lambda^w(\mu)$?
\end{enumerate}
The formal $T$-character of $V_\lambda^w$ records exactly these dimensions via
$$
\op{char}(V_\lambda^w) := \sum_{\mu} c_{\lambda,w}^\mu e^\mu,
$$
where $c_{\lambda,w}^\mu:=\dim V_{\lambda}^w(\mu)$. The Demazure character formula \cite{Anderson} provides a mechanism for calculating this formal character. We recall this precisely in Section \ref{DemMod}. 

The main drawback to using this character formula for answering question (b) or even question (a) is the presence of both positive and negative contributions, which makes it in some sense inefficient. One would prefer a manifestly nonnegative or \emph{combinatorial} formula for $c_{\lambda,w}^\mu$. This has been carried out extensively by numerous authors for 
$G=GL_n$ using the language and techniques of algebraic combinatorics -- for example, in the study of key polynomials (which serve as the $GL_n$ Demazure characters), key tableaux, and Demazure atoms. We refer to \cite{HY,Mas} and the references within for results in these directions. Recent developments include the relationship between question (a) and the determination of which Schubert varieties are ``Levi-spherical'' \cite{GHY}. 

In general type, the work of Littelmann \cite{Lit1, Lit2} and Kashiwara \cite{Kas} on crystal bases, Demazure crystals, and the path model leads to combinatorial constructions which answer questions (a) and (b) for any $G$ and fixed $\lambda$ and $w \in W$. However, these often require specialized computations and are less effective at producing global statements for generic $G$, $\lambda$, and $w$.  

When $w = w_0$, there is a simple answer to question (a), for arbitrary $G$ and $\lambda$. In this case $V_\lambda^{w_0} = V_\lambda$, and a classical result (see, for example, \cite{Hal}*{Theorem 7.41}) says that $V_\lambda(\mu) \neq 0$ if and only if $\lambda - \mu \in Q$ and 
$$
\mu \in \op{conv} (\op{wt}(V_\lambda)) = \op{conv} (\{ v \lambda : v \in W\}),
$$
where $\op{conv}$ denotes the rational convex hull inside $X^*(T)\otimes \Q$, where $X^*(T)$ denotes the lattice of algebraic characters of $T$,
$Q$ denotes the root lattice of $G$ with respect to $T$ (that is, the integral span of the root system $\Phi \subset X^*(T)$), and $\op{wt}(V_\lambda)$ is the set of nontrivial weights of $V_\lambda$ (see Section \ref{Notation} for full notational conventions). 

One of our main goals is to generalize this result to arbitrary $w \in W$ as an answer to question (a). Our main theorem is precisely of this form when $G$ is simple and of classical type (that is, of type $A_r$, $B_r$, $C_r$, or $D_r$) or exceptional type $F_4$ or $G_2$. Before we state our results, we introduce our main object of study: the \emph{weight polytope} $P_\lambda^w$ of a Demazure module $V_\lambda^w$, defined as 
\[P_{\lambda}^w:= \op{conv}(\{\mu \in X^*(T): V_{\lambda}^w(\mu)\neq 0 \}).\]
As a preliminary result (Theorem \ref{convexequal}), we describe $P_\lambda^w$ by its vertices, 
$$
P_\lambda^w = \op{conv}\{v\lambda :v \in W, \  v \leq w\},
$$
generalizing the case for $w=w_0$. We also note that when $G=GL_n$ and $w$ is arbitrary, this vertex description follows from work of Fan and Guo on Schubitopes \cite{FG}, answering a conjecture of Monical, Tokcan, and Yong \cite{MTY}*{Conjecture 3.13} on key polynomials. With this in hand, we can now state the primary result of this paper (c.f. Theorems \ref{A-result}, \ref{BCD-result}, and Section \ref{exceptionals}). 

\begin{theorem}\label{lattice}
Suppose that $G$ is of type $A_r$, $B_r$, $C_r$, $D_r$, $F_4$ or $G_2$. Then $V_\lambda^w(\mu)\ne0$ if and only if $\lambda-\mu\in Q$ and $\mu\in P_\lambda^w$. 
\end{theorem}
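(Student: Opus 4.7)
The ``only if'' direction is immediate: $V_\lambda^w(\mu)\ne 0$ forces $\mu$ to be a $T$-weight of $V_\lambda$, so $\lambda-\mu\in Q$, and $\mu\in P_\lambda^w$ by the definition of the weight polytope. The content is the converse.

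For this, my plan is to induct on $\ell(w)$. The case $w=e$ is trivial. For the inductive step write $w=s_iw'$ with $\ell(s_iw')=\ell(w')+1$; then $V_\lambda^w=U(\mathfrak{p}_{\alpha_i})\cdot V_\lambda^{w'}$ is stable under the minimal parabolic $P_{\alpha_i}$, and in particular under the $SL_2\subset P_{\alpha_i}$ associated to $\alpha_i$. Fix $\mu\in P_\lambda^w\cap(\lambda+Q)$. If $\mu\in P_\lambda^{w'}$, induction gives $V_\lambda^{w'}(\mu)\ne 0$, whence $V_\lambda^w(\mu)\ne 0$ since $V_\lambda^{w'}\subseteq V_\lambda^w$. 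Otherwise I aim to produce a lift $\mu'\in P_\lambda^{w'}\cap(\lambda+Q)$ with $\langle\mu',\alpha_i^\vee\rangle\ge 0$ and $\mu=\mu'-k\alpha_i$ for some $0\le k\le\langle\mu',\alpha_i^\vee\rangle$. Granting this, the string property for finite-dimensional $\mathfrak{sl}_2$-modules forces every weight space on the interval from $\mu'$ down to $s_i\mu'$ to be nonzero, and in particular $V_\lambda^w(\mu)\ne 0$.

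The existence of such a lift is the crux. On vertices it is clean: by the subword property, $\{v\le s_iw'\}=\{v\le w'\}\sqcup\{s_iu:u\le w',\ s_iu>u\}$, and for $u\le w'$ with $s_iu>u$ we have $s_iu\lambda=u\lambda-\langle\lambda,u^{-1}\alpha_i^\vee\rangle\alpha_i$ with $\langle\lambda,u^{-1}\alpha_i^\vee\rangle\ge 0$ because $u^{-1}\alpha_i$ is a positive root. Thus the new vertices of $P_\lambda^w$ are obtained from those of $P_\lambda^{w'}$ by sliding downward in the $-\alpha_i$ direction. I would upgrade this to all of $P_\lambda^w$: every point of $P_\lambda^w\setminus P_\lambda^{w'}$ lies on an $\alpha_i$-segment whose upper endpoint is in $P_\lambda^{w'}$ and has nonnegative $\alpha_i^\vee$-pairing. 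The lattice condition then propagates for free, because $\alpha_i\in Q$ forces integer translates in the $\alpha_i$ direction to preserve membership in $\lambda+Q$.

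The main obstacle is precisely this polytope lemma identifying $P_\lambda^w$ as the union of $\alpha_i$-strings emanating from $P_\lambda^{w'}$. I expect it to require the explicit facet description of $P_\lambda^w$ rather than to follow formally from the vertex description alone, since what we need is control over interior points of $P_\lambda^w$ relative to $P_\lambda^{w'}$. This is where I expect the type restriction in Theorem \ref{lattice} to enter: the classical types $A,B,C,D$ and small exceptionals $F_4,G_2$ admit tractable inequality descriptions of the Demazure polytope (by direct root-system analysis, or for $B,C,D$ by folding from simply-laced covers), while the more intricate combinatorics of $E_6,E_7,E_8$ plausibly accounts for their exclusion.
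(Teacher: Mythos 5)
The skeleton you propose — inducting on $\ell(w)$, writing $w=s_iw'$ with $\ell(w)=\ell(w')+1$, using $s_i$-stability and the $\mathfrak{sl}_2$-string property to slide $\mu$ up to a point of $P_\lambda^{w'}$ — is exactly the paper's Proposition~\ref{indsketch}, and your description of the easy direction is also fine. But your claim that ``the lattice condition then propagates for free, because $\alpha_i\in Q$ forces integer translates in the $\alpha_i$ direction to preserve membership in $\lambda+Q$'' is where the argument breaks. What you actually need is a \emph{lattice} point $\mu'_{\mathrm{int}}$ on the segment $(\mu+\Q\alpha_i)\cap P_\lambda^{w'}$, so that the inductive hypothesis applies to it. The fact that integer translates of $\mu$ stay in $\lambda+Q$ is irrelevant unless the segment actually contains such an integer translate, and in general it need not: the endpoints $\mu',\mu''$ of that segment are only a priori rational, and the segment could lie strictly between two consecutive lattice points on the $\alpha_i$-line, or even degenerate to a single non-lattice point. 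This is precisely the content that the paper must work for. You've identified ``the polytope lemma'' as the obstacle, but that lemma (every $\mu\in P_\lambda^w$ lies on an $\alpha_i$-segment meeting $P_\lambda^{w'}$) is already easy from $s_i$-stability and Corollary~\ref{maxs-are-equal}; the genuine obstacle, which you elide, is the integrality of a point on that segment.

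Your intuition that the type restriction enters here is correct, but the mechanism is different from what you guess: the inequality description of $P_\lambda^w$ (Theorem~\ref{necsufineqs}) is uniform across all types, so ``tractable inequality descriptions'' is not the distinguishing feature. What matters is the size of the coefficients $\langle v^{-1}\alpha_i, x_j\rangle$ appearing when you compute where the segment endpoint sits. In type $A_r$ these lie in $\{-1,0,1\}$, which forces the endpoints $\mu',\mu''$ to be honest lattice points (Proposition~\ref{TypeAInt-2}) and the induction closes cleanly. In types $B_r,C_r,D_r$ the coefficients lie in $\{-2,\dots,2\}$, so the endpoints are only half-integral (Lemma~\ref{typebcdhalfint}); when $\mu'\ne\mu''$ one can still find a lattice point on the segment, but when $\mu'=\mu''$ is a half-odd-integer offset there is \emph{no} lattice point on the segment at all, and the paper must abandon the string argument entirely in that case: it shows $\mu$ lies on a proper face, identifies that face with a Demazure polytope for a lower-rank Levi (Corollary~\ref{facetDemazurestructure1}, Proposition~\ref{facetDemazurestructure2}), and runs a second, nested induction on rank. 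For $F_4$ and $G_2$ the coefficient bounds are $2$ and $3$ and the above dichotomy fails, so the paper instead verifies the statement computationally by finding the Hilbert basis of the semigroup $\mathcal{S}_w$ for each of the finitely many $w$. Your proposal as written contains none of these ingredients and would stall as soon as the segment fails to contain a lattice point, which already happens in type $B_2$.
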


\begin{remark}
	We conjecture that Theorem \ref{lattice} holds in the exceptional types $E_6$, $E_7$ and $E_8$ as well. See Section \ref{exceptionals} for discussion on approaches to the exceptional types.
\end{remark}

In type $A_r$, Theorem \ref{lattice} was conjectured by Monical, Tokcan, and Yong \cite{MTY}*{Conjecture 3.10}, phrased in terms of the Newton polytopes of key polynomials. Fink, M{\'e}sz{\'a}ros, and St. Dizier proved this result in \cite{FMSD}; i.e., they showed that key polynomials have \emph{saturated Newton polytopes} (see \cite{MTY}*{Definition 1.1}).
Since $P_\lambda^w$ is the Newton polytope of the Demazure character $\op{char}(V_\lambda^w)$ in that setting, our Theorem \ref{lattice} recovers this result for $G=GL_n$. The techniques in \cite{FMSD} come from matroid theory and seem distinct from ours. 
In contrast, the present approach 
is uniform in scope and situated naturally in the original geometric and representation-theoretic context of Demazure's work. 
Nevertheless, in analogy with the more combinatorial approach from $GL_n$, we make the following definition for Demazure characters in arbitrary Lie type.

\begin{definition}
We say that a Demazure character $\op{char}(V_\lambda^w)$ is \emph{saturated} if, for any $\mu$ satisfying $\lambda-\mu \in Q$ and $\mu \in P_\lambda^w$, $e^\mu$ appears with nonzero coefficient in $\op{char}(V_\lambda^w)$.
\end{definition}

\noindent
The usage of the term ``saturated" in this 
context is justified by Lemma \ref{borel-weil}. With this language, Theorem \ref{lattice} can be restated as in the following corollary. 

\begin{corollary} \label{restated} 
Suppose that $G$ is of type $A_r$, $B_r$, $C_r$, $D_r$, $F_4$ or $G_2$. Then all Demazure characters $\op{char}(V_\lambda^w)$ are saturated. 
\end{corollary}

To establish Theorem \ref{lattice}, we first initiate an in-depth study of the Demazure weight polytopes $P_\lambda^w$. In particular, in Section \ref{sectionconvexitysetup} we describe $P_\lambda^w$ as the convex hull of a simple set of vertices. We employ techniques of Geometric Invariant Theory (GIT) in Section \ref{GITsection} to derive a sufficient set of inequalities that determine when $\mu \in P_\lambda^w$. These inequalities are strengthened in Section \ref{ineq-comb} to a smaller set of necessary and sufficient inequalities by recasting the geometric results into combinatorial language. This is done via the \emph{Demazure product} on $W$; we recall the definition of this operation and derive the properties we need in Section \ref{ineq-comb}. In Section \ref{faces}, we study the faces of the polytope $P_\lambda^w$. Here, we find (Corollary \ref{facetDemazurestructure1}) that the faces of the polytope are themselves Demazure weight polytopes associated to Levi subgroups of $G$. It would be fruitful to compare these results with the results in \cite{TW} on faces and inequalities of Bruhat interval polytopes. Note that the collection of Bruhat interval polytopes and the collection of type $A$ Demazure polytopes are distinct but overlapping; see Remark \ref{BIP}. 

With all of this, we can begin constructing the proof of Theorem \ref{lattice} and the saturation of Demazure characters. In Section \ref{IntegralPoints} we prove a key result (Proposition \ref{indsketch}) which will be used in an inductive approach to the theorem. Finally, we derive the saturation of Demazure characters for type $A_r$ in Section \ref{sectiontypeA} and the remaining classical types in Section \ref{sectiontypesbcd}. In Section \ref{exceptionals}, we report on our proof of saturation for types $F_4$ and $G_2$ using computer calculations. There we rephrase Theorem \ref{lattice} in terms of the convex cone defined by the inequalities of Section \ref{ineq-comb} and examine the Hilbert basis of the underlying semigroup of interest. We make a connection between the saturation of Demazure characters in any simple type and the smallest possible Hilbert basis (Proposition \ref{PropP}), and conclude with a conjecture concerning the remaining exceptional types. 

\subsection{Connections with cycles in affine Grassmannians}
This work takes place entirely within the geometric framework of line bundles on Schubert varieties, the Demazure character formula, and the associated combinatorics. However, the original impetus for the study of this problem was related to the geometry of cycles in the affine Grassmanian, Mirkovi\'{c}-Vilonen cycles and Mirkovi\'{c}-Vilonen polytopes. We briefly mention the connections to these topics in this subsection, see \cite{MV} and \cite{Kamn} for notation and background. 

Recall that $\lambda \in X^*(T)$ for $T \subset G$; we can identify $\lambda$ as a coweight $\lambda \in X_*(T^{\vee})$ for $T^{\vee} \subset G^{\vee}$, the Langlands dual group over $\mathbb{C}$. Let $\mathrm{Gr}_{G^{\vee}}$ denote the affine Grassmannian for $G^{\vee}$, $T_{\mu}$ the (opposite) semi-infinite cell  $N^{-}_{\mathcal{K}} \cdot L_\mu$ associated to $\mu \in X^*(T)=X_*(T^{\vee})$, and $\mathrm{Gr}_{G^{\vee}}^{\lambda}$, $\overline{\mathrm{Gr}}_{G^{\vee}}^{\lambda}$ the affine Schubert cell and affine Schubert varieties associated with $\lambda \in X_*(T^{\vee})$.

In their proof of the geometric Satake isomorphism \cite{MV}, Mirkovi\'{c} and Vilonen introduced certain cycles, now known as Mirkovi\'{c}-Vilonen cycles (or MV cycles for short), as the irreducible components of $\overline{\mathrm{Gr}}_{G^{\vee}}^{\lambda} \cap T_{\mu}$, as long as this intersection is nonempty. In \cite{Kamn}, (see also  \cite{JAnderson}), Kamnitzer assigned to each such irreducible component (MV cycle) a pseudo-Weyl polytope (MV polytope) and performed a careful study of the associated combinatorics of these polytopes and their relationship to crystal bases.

The collection of MV polytopes (which are the moment map images of MV cycles) is a broad class of lattice polytopes. The easiest example is the (unique) irreducible component $C$ of $T_{w_0\lambda} \cap \overline{\mathrm{Gr}}_{G^{\vee}}^{\lambda}$; the associated MV polytope is exactly the Weyl polytope $W_{\lambda}=P_{\lambda}^{w_0}$. Subsequently, the work of Naito and Sagaki \cite{Naito}*{Theorem 4.1.5} showed that in fact arbitrary Demazure weight polytopes $P_\lambda^w$ arise as ``extremal" MV polytopes. 

\begin{corollary}
	The Demazure polytopes $P_{\lambda}^w$ admit the following descriptions. They are (1) convex hulls of the extremal weights of the Demazure module $V_{\lambda}^w$ (Theorem \ref{convexequal}) and (2) the extremal MV polytopes of weight $w\lambda$.  
	\end{corollary}


We connect the present work to \cite{Naito} by giving descriptions of the $P_{\lambda}^w$ as pseudo-Weyl polytopes in Section \ref{pseudo}; for more on Demazure polytopes as MV polytopes and cycles in the affine Grassmannian, we refer the reader to \cite{Dykes}.

Schwer  \cite{Schwer}*{Theorem 11.7} provides an interpretation on the geometric side, showing that the dimension of each weight space of a Demazure module (for $B)$ is equal to the number of irreducible components of ``maximal" dimension of $IL_{w\lambda} \cap T_{\mu}$ in $\overline{\mathrm{Gr}}_{G^{\vee}}$, where $I$ is the Iwahori subgroup associated to $B^{\vee}$ (see also Ion \cite{Ion}, Theorem 1 for a related statement).
As Theorem \ref{lattice} exactly parametrizes the nontrivial weight spaces of $V_\lambda^w$ via lattice points in $P_\lambda^w$ (apart from simple factors of type $E$), we arrive at the following connection. 

 
 \begin{corollary}
	The integral lattice points $\nu \in P_{\lambda}^w$ admit the following descriptions. They (1) parametrize nontrivial weight spaces $V_{\lambda}^{w}(\nu) \neq 0$ in the Demazure module $V_{\lambda}^{w}$ and (2) they correspond to points $L_{\nu} \in \mathrm{Gr}_{G^{\vee}}$ such that the intersection $IL_{w \lambda} \cap T_{\nu}$ has a component of ``maximal'' dimension. 
\end{corollary}


While the above geometric incarnations via generalized MV cycles are perfectly appropriate, it is understandable to seek an even more straightforward geometric interpretation. For example, for a Demazure polytope $P_{\lambda}^w$, do the integral lattice points inside the polytope correspond to points $L_{\mu}$ which lie in the closure of $IL_{w \lambda} \cap T_{\lambda}$? This is clearly true in the two extreme cases ($w=e$ and $w=w_0$), and is not too difficult to see for $l(w)=1.$ For other $w$ this interpretation has not yet been established.

\subsection{Acknowledgements}

The authors are very grateful to D. Anderson and 
A. Yong for their feedback on an earlier version of the manuscript. The authors also wish to thank J. Kamnitzer for questions and comments on the first draft.
The first author wishes to thank J. Hong for discussions on this problem during the course of their studies of affine Schubert varieties \cite{BH}. Finally, we thank the anonymous referee for many helpful suggestions, comments, and corrections. 

\section{Notation} \label{Notation}

We fix $G$ a reductive linear algebraic group over $\mathbb{C}$. We choose a maximal torus and Borel subgroup $T \subset B \subset G$, with corresponding Lie algebras $\mathfrak{h}$, $\mathfrak{b}$ and $\mathfrak{g}$.  We denote by $X^*(T)$ the lattice of weights of $T$, and by $X_*(T)$ the lattice of coweights. Their natural pairing is denoted by $\langle \, , \rangle$, e.g. $\langle \lambda, x \rangle$ for $\lambda \in X^*(T)$ and $x \in X_*(T)$. 

We let $\Phi$ denote the set of  roots of $G$ with respect to $T$, and denote by $\Phi^+$ the set of positive roots of $G$ with respect to $B$.  We let $\Phi^{\vee}$ denote the set of coroots, so $(\Phi, X^*(T), \Phi^{\vee}, X_*(T))$ is a root datum for $G$. We write $X^*(T)^+$ for the set of dominant weights and $W=N_G(T)/T$ for the Weyl group of $G$. We denote by $\Delta=\{\alpha_1, \dots \alpha_r\}$ (resp., $\{\alpha^{\vee}_1, \dots \alpha^{\vee}_r\} $) the set of simple roots in  $\Phi$ (resp., simple coroots in $\Phi^{\vee}$). The fundamental coweights $x_i\in \op{span}_\Q(\{\alpha_1^\vee,\hdots,\alpha_r^\vee\})$ are uniquely defined by the requirement that $\langle \alpha_i, x_j \rangle = \delta_{ij}$. 

For any subset $S\subset\Delta$ there is a standard parabolic subgroup $P_S$ of $G$ generated by $B$ and the negative root subgroups $U_{-\alpha_i}$ for $\alpha_i\in S$. The Weyl group $W_P$ of $P$ is by definition the subgroup of $W$ generated by the simple reflections $s_i$ for $\alpha_i\in S$. Each coset in $W/W_P$ has a unique minimal-length representative. The set of all such representatives is denoted by $W^P$. If $S = \{1,\hdots, r\}\setminus \{i\}$, then we usually use $P_i$ for the maximal parabolic $P_S$.  

If $\lambda \in X^*(T)^+$, we write $V_{\lambda}$ for the irreducible representation of $G$ of highest weight $\lambda$. We write $V_{\lambda}^w$ for the Demazure module associated to $w$ and $\lambda$. If $\mu \in X^*(T)$ and $V$ is a $T$-module, we write $V(\mu)$ for the subspace of weight $\mu$.

Let $\{e_{\alpha_i}, h_{\alpha_i}, f_{\alpha_i}\}$ ($1 \leq i \leq r$) be a Chevalley basis for $\mathfrak{g}$. We write $\mathfrak{sl}_{2, \alpha_i}$ for the image of the embedding $\mathfrak{sl}_2 \rightarrow \mathfrak{g}$ associated to $\{e_{\alpha_i}, h_{\alpha_i}, f_{\alpha_i}\}$.
We may also consider the group counterpart of this, as in \cite{Springer}: we write $G_{\alpha_i}$ for the group generated by $U_{\alpha_i}$ and $U_{-\alpha_i}$. Then the ``root subgroup'' $G_{\alpha_i}$ is isomorphic to $SL_{2}(\mathbb{C})$ or $PGL_2(\mathbb{C})$ and $\op{Lie}(G_{\alpha_i})=\mathfrak{sl}_{2,\alpha_i}$.

For many of our polytope calculations we prefer to work rationally, so we write $X^*(T)_\Q$ for $X^*(T)\otimes \Q$. 
For an $n-$dimensional polytope $P$, we call a face of dimension $n-1$ a facet.

\section{Recollections on Demazure modules} \label{DemMod}

In this section, briefly we review the fundamental objects of interest for this paper; namely, the Demazure modules and the Demazure character formula. The study of these modules and their characters has a long history, going back to Demazure \cite{Dem1, Dem2}, with wide-spread applications and perspectives coming from geometry, representation theory, and combinatorics. In this paper, we focus on the classical representation-theoretic aspects of the subject, taking \cite{Anderson} and \cite{KumarBook}*{Ch. 8} as our primary sources. 

Let $\lambda \in X^\ast(T)^+$ be an integral dominant weight, and $V_\lambda$ the associated irreducible highest-weight representation of the Lie algebra $\mathfrak{g}$. For any $w \in W$, there is the one-dimensional extremal weight space $V_\lambda (w\lambda)$; fix a nonzero vector $v_{w\lambda} \in V_\lambda (w\lambda)$. Then the \emph{Demazure module} $V_\lambda^w$ is defined by 
$$
V_\lambda^w := U(\mathfrak{b}).v_{w\lambda},
$$
where $U(\mathfrak{b})$ is the universal enveloping algebra of $\mathfrak{b}$. While this gives an algebraic construction of $V_\lambda^w$, we also briefly sketch a geometric construction, which was the original interest in these modules and will play a major role in the geometric arguments of Section \ref{GITsection}. 

Let $X=G/B$, the flag variety associated to $G$ and the fixed Borel subgroup $B$, and $X_w  = \overline{BwB/B} \subset X$ the Schubert variety associated to $w \in W$. For an integral dominant weight $\lambda \in X^*(T)^+$, let $L(\lambda)$ denote the associated line bundle $G\times_B \C_{-\lambda}$ on $X$ and $L_w(\lambda)$ the pull-back of $L(\lambda)$ to $X_w$. Then, the Demazure module $V_\lambda^w$ can be constructed geometrically via 
$$
V_\lambda^w \cong H^0(X_w, L_w(\lambda))^\ast.
$$
From this description, one can derive the following standard results; we defer to \cite{Anderson},\cite{KumarBook}*{Ch. 8} for full details. 

\begin{proposition} \label{DemProps}
\begin{enumerate} 
\item For any $v \leq w$ in the Bruhat order, the canonical restriction map $H^0(X_w, L_w(\lambda)) \to H^0(X_v, L_v(\lambda))$ is surjective, and induces an inclusion $V_\lambda^v \hookrightarrow V_\lambda^w$. 
\item Let $w=s_{i_1}\dots s_{i_k}$ be a reduced expression. Let $G_{\alpha_{i_1}} \subset G$ be the root subgroup associated to $\alpha_{i_1}$. Then $X_w$ is $G_{\alpha_{i_1}}$-stable, so both $H^0(X_w, L_w(\lambda))$ and $V_\lambda^w$ are $G_{\alpha_{i_1}}$-modules. 
\end{enumerate}
\end{proposition}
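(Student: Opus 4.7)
The plan is to treat the two parts separately, in both cases leveraging the geometric identification $V_\lambda^w \cong H^0(X_w, L_w(\lambda))^\ast$ recalled just before the proposition.

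For part (1), I would start from the observation that $v \leq w$ in the Bruhat order is equivalent to the closed inclusion $X_v \hookrightarrow X_w$ of Schubert varieties, so the canonical restriction map on sections is simply the pullback along this closed embedding. To prove surjectivity, I would use the short exact sequence
\[
0 \to \mathcal{I}_{X_v / X_w} \otimes L_w(\lambda) \to L_w(\lambda) \to L_v(\lambda) \to 0
\]
on $X_w$, together with the vanishing of $H^1(X_w, \mathcal{I}_{X_v/X_w}\otimes L_w(\lambda))$. The required vanishing follows from the standard cohomology theory of line bundles on Schubert varieties, established via Bott--Samelson resolutions or Frobenius splitting (see \cite{KumarBook}*{Ch. 8}), combined with the normality of $X_v$ and $X_w$. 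The surjection on global sections is automatically $B$-equivariant, so dualizing yields a $B$-equivariant injection $V_\lambda^v \hookrightarrow V_\lambda^w$; one then checks compatibility with the $U(\mathfrak{b})$-action, which is immediate since both sides arise from the ambient $G$-action on $V_\lambda$. Alternatively, one can reduce to the case $\ell(w) = \ell(v)+1$ by induction and then argue on a single step of a Bott--Samelson tower, but the sheaf-theoretic formulation above is cleaner.

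For part (2), the key combinatorial input is that whenever $w = s_{i_1}\cdots s_{i_k}$ is a reduced expression, one has $s_{i_1} w < w$ in the Bruhat order. A standard result on Schubert varieties then gives that $X_w$ is stable under the minimal parabolic $P_{\alpha_{i_1}} = B \cup B s_{i_1} B$; concretely, $B s_{i_1} B \cdot X_w \subseteq X_w$ because every point of the left-hand side lies in $\overline{B w' B/B}$ for some $w' \leq w$. Since $G_{\alpha_{i_1}} \subset P_{\alpha_{i_1}}$, we conclude that $X_w$ is $G_{\alpha_{i_1}}$-stable. The line bundle $L(\lambda)$ on $X$ is $G$-equivariant by construction, so its restriction $L_w(\lambda)$ to $X_w$ is $G_{\alpha_{i_1}}$-equivariant. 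Therefore $H^0(X_w, L_w(\lambda))$, and its dual $V_\lambda^w$, acquire algebraic $G_{\alpha_{i_1}}$-module structures compatible with their existing $B$-actions.

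The only genuinely nontrivial input is the cohomology vanishing used in part (1); but since this is a classical result on Schubert varieties, we may invoke it directly from \cite{KumarBook}*{Ch. 8}. The rest of the argument amounts to tracking equivariance through pullback and dualization, which I expect to be straightforward.
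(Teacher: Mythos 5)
The paper does not actually prove Proposition \ref{DemProps}: it states that these are standard results derived from the geometric description $V_\lambda^w \cong H^0(X_w,L_w(\lambda))^\ast$ and defers entirely to \cite{Anderson} and \cite{KumarBook}*{Ch.\ 8}. Your proof sketch supplies the argument that those references contain, and it is essentially correct. Two small remarks. For part (1), the ideal-sheaf approach works, but the required $H^1$-vanishing is not just the usual higher cohomology vanishing for $L_w(\lambda)$; it is the stronger statement involving the ideal sheaf of a union of smaller Schubert varieties, which comes from compatible Frobenius splitting (you do gesture at this, so the citation is appropriate, but one should be aware it is a strictly stronger input). A slightly cleaner route avoiding ideal sheaves altogether: the restriction $H^0(G/B, L(\lambda)) \to H^0(X_v, L_v(\lambda))$ is surjective for \emph{every} Schubert variety $X_v$ (this is the standard Demazure--Ramanathan surjectivity), and it factors through $H^0(X_w, L_w(\lambda))$, so the second map in the factorization is automatically surjective. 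For part (2), the assertion that $P_{\alpha_{i_1}}\cdot X_w \subseteq X_w$ whenever $s_{i_1}w < w$ is exactly the content of the Bruhat-order lifting property (e.g.\ \cite{BGG}*{Lemma 2.5} in the paper's own bibliography), and your phrase ``every point of the left-hand side lies in $\overline{Bw'B/B}$ for some $w'\le w$'' is the conclusion of that lemma rather than a self-evident fact, so it should be cited; once that is in place, the $G$-equivariance of $L(\lambda)$ restricting to a $G_{\alpha_{i_1}}$-equivariant structure on $L_w(\lambda)$ is immediate, as you say.
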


To compute the character of $V_\lambda^w$ (as a $T$-module), we recall the \emph{Demazure operators} $D_i$, indexed by the simple roots $\alpha_i$.

\begin{definition} For arbitrary $\lambda \in X^\ast(T)$ and simple root $\alpha_i$, define
	
	  \[
	D_i(e^{\lambda}) = \frac{1-e^{-\alpha_i}s_i}{1-e^{-\alpha_i}}e^\lambda =  \begin{cases}
	e^{\lambda}+e^{\lambda-\alpha_i} \dots + e^{s_{i}\lambda}, & \text{for } \langle \lambda, \alpha_i^{\vee} \rangle \geq 0, \\
	0, & \text{for } \langle \lambda, \alpha_i^{\vee} \rangle =-1,\\
	-(e^{\lambda+ \alpha_i}+ \dots + e^{s_{i}\lambda-\alpha_i}), & \text{for } \langle \lambda, \alpha_i^{\vee} \rangle <-1.
	\end{cases} 
	\]

We extend by linearity to obtain operators on $\mathbb{Z}[X^*(T)]$.
\end{definition}
\noindent The following easy lemma is an immediate consequence of the definition of $D_i$ and is a first indication of the connection between the Demazure operators and characters. 

\begin{lemma} \label{virtchar}
	Let $S= \sum m_\lambda e^{\lambda} \in \mathbb{Z}[X^*(T)]$. Then $D_i(S)$ is the character of a virtual representation of $\mathfrak{sl}_{2, \alpha_{i}}$.
\end{lemma}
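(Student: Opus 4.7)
The plan is to reduce to basis elements by linearity, and then directly identify each $D_i(e^\lambda)$ with (plus or minus) the $T$-character of an explicit irreducible $\mathfrak{sl}_{2,\alpha_i}$-module. Since both the operator $D_i$ and the character map from the virtual representation ring of $\mathfrak{sl}_{2,\alpha_i}$ into $\mathbb{Z}[X^*(T)]$ are $\mathbb{Z}$-linear, and since virtual characters form a subgroup of $\mathbb{Z}[X^*(T)]$, it suffices to prove the lemma for $S = e^\lambda$ for every $\lambda \in X^*(T)$.

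I would then split according to the value of $n := \langle \lambda, \alpha_i^\vee\rangle$, mirroring the three cases in the definition of $D_i$. In the case $n \geq 0$, the formula gives $D_i(e^\lambda) = e^\lambda + e^{\lambda - \alpha_i} + \cdots + e^{s_i \lambda}$, and I would identify this with the $T$-character of the $(n+1)$-dimensional irreducible $\mathfrak{sl}_{2,\alpha_i}$-module generated by a highest-weight vector of $T$-weight $\lambda$: successive application of $f_{\alpha_i}$ subtracts $\alpha_i$ from the $T$-weight, and after $n$ steps one arrives at $\lambda - n\alpha_i = s_i\lambda$, giving exactly the displayed string. The case $n = -1$ is immediate: $D_i(e^\lambda) = 0$ is the character of the zero representation.

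For the remaining case $n \leq -2$, I would write $D_i(e^\lambda) = -\bigl(e^{\lambda + \alpha_i} + e^{\lambda + 2\alpha_i} + \cdots + e^{s_i\lambda - \alpha_i}\bigr)$ and observe that $\langle \lambda + \alpha_i, \alpha_i^\vee\rangle = n + 2 \geq 0$; applying the analysis of the first case to $\lambda + \alpha_i$ identifies the expression in parentheses with the $T$-character of the $(-n-1)$-dimensional irreducible $\mathfrak{sl}_{2,\alpha_i}$-module with highest weight $\lambda + \alpha_i$ (whose lowest weight is $s_i(\lambda + \alpha_i) = s_i\lambda - \alpha_i$, matching the final term). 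Hence $D_i(e^\lambda)$ is the negative of an honest $\mathfrak{sl}_{2,\alpha_i}$-character, so a virtual character.

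There is no substantive obstacle: the argument is an unpacking of the definitions, with the only bookkeeping being the identification of the endpoints of the string in the third case via $s_i\lambda = \lambda - n\alpha_i$. Combining the three cases with the initial linearity reduction yields the lemma.
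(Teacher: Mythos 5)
Your overall strategy---reduce by linearity to $S = e^\lambda$, then argue by cases on $n := \langle \lambda, \alpha_i^\vee\rangle$, matching each case to an explicit irreducible $\mathfrak{sl}_{2,\alpha_i}$-character---is the natural one, and it fills in the details the paper declares an ``immediate consequence of the definition'' (the paper supplies no written proof). The cases $n \geq 0$ and $n = -1$ are handled correctly.

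The third case, however, contains an error. You assert that $\langle \lambda + \alpha_i, \alpha_i^\vee\rangle = n + 2 \geq 0$, but this is false whenever $n < -2$ (for instance $n = -3$ gives $n + 2 = -1$). In that regime $\lambda + \alpha_i$ is actually the \emph{anti-dominant} endpoint of the $\alpha_i$-string $e^{\lambda+\alpha_i} + e^{\lambda+2\alpha_i} + \cdots + e^{s_i\lambda - \alpha_i}$; the dominant endpoint is the other end, $s_i\lambda - \alpha_i$, which satisfies $\langle s_i\lambda - \alpha_i, \alpha_i^\vee\rangle = -n - 2 \geq 0$. Consequently the parenthesized sum is the $T$-character of the $(-n-1)$-dimensional irreducible $\mathfrak{sl}_{2,\alpha_i}$-module of \emph{highest} $T$-weight $s_i\lambda - \alpha_i$ (and lowest $T$-weight $\lambda + \alpha_i$), not the module of highest weight $\lambda + \alpha_i$. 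Equivalently, the first-case analysis should be applied to $s_i\lambda - \alpha_i$, not to $\lambda + \alpha_i$; if one tries to run it on $\lambda + \alpha_i$ when $n = -3$, one lands in the $\langle \cdot, \alpha_i^\vee\rangle = -1$ regime and the output would be $0$, not the two-term string $-(e^{\lambda+\alpha_i}+e^{\lambda+2\alpha_i})$ actually produced by $D_i$. With the endpoints swapped, your argument is complete and the conclusion ($D_i(e^\lambda)$ is minus an honest $\mathfrak{sl}_{2,\alpha_i}$-character) stands.
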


\noindent Working from the definition, it is easy to verify that for any $S \in \mathbb{Z}[X^*(T)]$, $D_i \circ D_i (S)= D_i (S)$. Less obvious, but well-known, is that the Demazure operators satisfy the braid relations for the associated Weyl group $W$. Thus, for any reduced word $w=s_{i_1} s_{i_2} \dots s_{i_k}$, there is a well-defined operator $D_w$, via 
$$
D_w:= D_{i_1} D_{i_2} \cdots D_{i_k}.
$$
With this in hand, the \emph{Demazure character formula} is the following connection between $D_w$ and $V_\lambda^w$; see, for example, \cite{Anderson}. 

\begin{theorem}\label{dem-char}
	Let $\lambda \in X^*(T)^+$ and $w=s_{i_1} \dots s_{i_k}$ be a reduced word. Then \[
	\op{char} V_\lambda^w =D_w (e^\lambda)= D_{i_1} \dots D_{i_k}(e^{\lambda})\]
\end{theorem}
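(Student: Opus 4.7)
The plan is to induct on $\ell(w)$. The base case $w = e$ is immediate: $V_\lambda^e = \C v_\lambda$, so $\op{char}(V_\lambda^e) = e^\lambda = D_e(e^\lambda)$.

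For the inductive step, write $w = s_{i_1} w'$ with $w' = s_{i_2} \cdots s_{i_k}$, so $\ell(w') = k-1$ and $s_{i_1} w' > w'$ in Bruhat order. By the inductive hypothesis, $\op{char}(V_\lambda^{w'}) = D_{w'}(e^\lambda)$, and it suffices to prove the single-step recursion
$$
\op{char}(V_\lambda^w) = D_{i_1}(\op{char}(V_\lambda^{w'})).
$$
The key structural fact I would establish first is that $V_\lambda^w = U(\mathfrak{b} + \mathfrak{sl}_{2, \alpha_{i_1}}) \cdot V_\lambda^{w'}$; that is, $V_\lambda^w$ is obtained from $V_\lambda^{w'}$ by enlarging with the $\mathfrak{sl}_{2,\alpha_{i_1}}$-action. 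Indeed, $V_\lambda^{w'} \subset V_\lambda^w$ as $B$-modules by Proposition \ref{DemProps}(1), and $V_\lambda^w$ is a $G_{\alpha_{i_1}}$-module by Proposition \ref{DemProps}(2). Because our expression is reduced and $\lambda$ is dominant, one checks that $\langle w'\lambda, \alpha_{i_1}^\vee \rangle \geq 0$, so $v_{w'\lambda}$ is an $\mathfrak{sl}_{2, \alpha_{i_1}}$-highest weight vector. Consequently, $f_{\alpha_{i_1}}^{\langle w'\lambda, \alpha_{i_1}^\vee\rangle}$ applied to $v_{w'\lambda}$ is a nonzero multiple of $v_{w\lambda}$, which places $v_{w\lambda}$ in $U(\mathfrak{sl}_{2,\alpha_{i_1}}) \cdot V_\lambda^{w'}$ and gives the claim.

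The character recursion can then be derived either geometrically or representation-theoretically. Geometrically, the multiplication map $\pi: P_{i_1} \times^B X_{w'} \to X_w$ is birational (since $s_{i_1} w' > w'$) with $R\pi_* \mathcal{O} = \mathcal{O}_{X_w}$. Combining the projection formula with Borel--Weil for $P_{i_1}/B \cong \P^1$, one obtains
$$
H^0(X_w, L_w(\lambda)) \cong H^0(\P^1, \mathcal{V}),
$$
where $\mathcal{V}$ is the $P_{i_1}$-equivariant vector bundle on $P_{i_1}/B$ induced from the $B$-module $H^0(X_{w'}, L_{w'}(\lambda))$, and higher cohomology vanishes. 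A direct weight-space computation shows that the $T$-character of $H^0(\P^1, \mathcal{V})$ is $D_{i_1}$ applied to the fiber character, and dualizing gives the recursion. Representation-theoretically, one decomposes $V_\lambda^w$ $T$-equivariantly into $\mathfrak{sl}_{2,\alpha_{i_1}}$-irreducibles $M_j$; the $B$-stable subspace $V_\lambda^{w'}$, being $e_{\alpha_{i_1}}$-stable, meets each $M_j$ in its top portion (i.e., the $e_{\alpha_{i_1}}$-stable subspace, consisting of the highest weights of the string down to some cutoff). Summing the elementary identity $D_{i_1}(e^\mu + e^{\mu - \alpha_{i_1}} + \cdots + e^{\mu - j\alpha_{i_1}}) = e^\mu + \cdots + e^{s_{i_1}\mu}$ over all strings, which is a direct calculation from the piecewise definition of $D_{i_1}$ (and is consistent with Lemma \ref{virtchar}), yields the recursion.

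The main obstacle is producing a $T$-equivariant $\mathfrak{sl}_{2,\alpha_{i_1}}$-decomposition of $V_\lambda^w$ in which $V_\lambda^{w'}$ appears as a union of such string-tops, or in the geometric route verifying the higher-cohomology vanishing for the sheaves involved. The former requires a careful analysis of how $B$-stability constrains $\mathfrak{sl}_{2,\alpha_{i_1}}$-strings; the latter relies on classical properties of Schubert varieties (normality, rational singularities, or Frobenius splitting). Either route delivers the recursion and completes the induction.
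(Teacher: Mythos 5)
The paper itself does not prove Theorem \ref{dem-char}; it cites Andersen \cite{Anderson}, so I am comparing your proposal against the standard literature proof rather than against an in-paper argument.

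Your geometric route is essentially the argument in the cited source and is correctly outlined. The map $P_{i_1}\times^B X_{w'}\to X_w$ is indeed birational when $s_{i_1}w'>w'$, and the two genuinely hard inputs are $R\pi_*\mathcal{O}_{P_{i_1}\times^B X_{w'}}=\mathcal{O}_{X_w}$ and the vanishing of $H^1$ for the induced bundle on $\mathbb{P}^1$; both rest on normality/rational singularities (or Frobenius splitting) of Schubert varieties. One caution on the wording: the ``direct weight-space computation'' only identifies $D_{i_1}(\operatorname{ch}M)$ with the Euler characteristic $\operatorname{ch}H^0(\mathbb{P}^1,\mathcal V)-\operatorname{ch}H^1(\mathbb{P}^1,\mathcal V)$, so the identification with $\operatorname{ch}H^0$ alone is contingent on the $H^1$ vanishing you flag at the end. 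With that understood, this route is sound.

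Your representation-theoretic route, however, contains a concrete error that cannot be patched without reintroducing the whole difficulty. The ``elementary identity'' $D_{i_1}\bigl(e^{\mu}+e^{\mu-\alpha_{i_1}}+\cdots+e^{\mu-j\alpha_{i_1}}\bigr)=e^{\mu}+\cdots+e^{s_{i_1}\mu}$ is false for $0<j<\langle\mu,\alpha_{i_1}^\vee\rangle$. For instance, with $\langle\mu,\alpha_{i_1}^\vee\rangle=2$ and $j=1$, the definition gives $D_{i_1}(e^\mu+e^{\mu-\alpha_{i_1}})=e^\mu+2e^{\mu-\alpha_{i_1}}+e^{\mu-2\alpha_{i_1}}$, not the multiplicity-one string. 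The identity holds only when $j=0$ (a single top vector) or $j=\langle\mu,\alpha_{i_1}^\vee\rangle$ (the full string, by idempotency of $D_{i_1}$). Consequently, for the recursion $\operatorname{char}V_\lambda^{w}=D_{i_1}\operatorname{char}V_\lambda^{w'}$ to drop out of your decomposition, you need $V_\lambda^{w'}$ to meet each $\mathfrak{sl}_{2,\alpha_{i_1}}$-constituent $M_j$ of $V_\lambda^w$ in exactly its one-dimensional highest-weight line, not in an unspecified ``top portion down to some cutoff.'' The $e_{\alpha_{i_1}}$-stability of $V_\lambda^{w'}$ only forces the intersection with each string to be a top segment; it does not force that segment to have length one. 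That one-dimensionality is precisely the hard statement at the heart of every proof of this theorem -- it is what Demazure's original argument assumed without justification (the gap was later identified by Kac) and what normality of Schubert varieties was ultimately brought in to establish. As written, your second route restates the theorem in disguise rather than providing an independent path to it.
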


Finally, using Proposition \ref{DemProps}, we get the following strengthening of Lemma \ref{virtchar} which will be used throughout what follows. 

\begin{corollary}  \label{sl2-stable} 
For $\lambda \in X^*(T)^+$, and for $s_{i_1} \dots s_{i_k}$ a reduced word for $w \in W$, $\op{char}(V_{\lambda}^w)$ is an honest (not virtual) character of an $\mathfrak{sl}_{2,\alpha_{i_1}}$-module.
\end{corollary}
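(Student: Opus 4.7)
The plan is that this corollary is essentially an immediate packaging of Proposition \ref{DemProps}(2) in the language of characters. Since $s_{i_1}s_{i_2}\cdots s_{i_k}$ is a reduced expression for $w$, Proposition \ref{DemProps}(2) says that $V_\lambda^w$ is not merely a $B$-module but an honest (finite-dimensional) $G_{\alpha_{i_1}}$-module, and hence an honest $\mathfrak{sl}_{2,\alpha_{i_1}}$-module. Because $V_\lambda^w$ is also a $T$-module and $T$ normalizes $G_{\alpha_{i_1}}$, the two structures combine into a representation of the reductive Levi subgroup $L_{\alpha_{i_1}} := T\cdot G_{\alpha_{i_1}}$ of the minimal parabolic containing $B$.

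I would then invoke complete reducibility of finite-dimensional representations of the reductive group $L_{\alpha_{i_1}}$ (equivalently, Weyl's theorem for $\mathfrak{sl}_{2,\alpha_{i_1}}$ together with the semisimplicity of the $T$-action) to write $V_\lambda^w = \bigoplus_j M_j$ as a direct sum of irreducible $L_{\alpha_{i_1}}$-submodules. Each $M_j$ is, in particular, an irreducible $\mathfrak{sl}_{2,\alpha_{i_1}}$-module whose $T$-weights are well-defined elements of $X^*(T)$, so each $\op{char}(M_j)\in\Z[X^*(T)]$ is the character of an honest $\mathfrak{sl}_{2,\alpha_{i_1}}$-module. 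Adding these up gives $\op{char}(V_\lambda^w) = \sum_j \op{char}(M_j)$ as a nonnegative integer combination of such characters. Combined with Theorem \ref{dem-char}, this strengthens Lemma \ref{virtchar}: the potentially virtual character $D_{i_1}D_{i_2}\cdots D_{i_k}(e^\lambda)$ is honest precisely because the starting data $e^\lambda$ with $\lambda$ dominant forces each intermediate Demazure character to come from a genuine $G_{\alpha_{i_j}}$-representation. There is no real obstacle here; the content is entirely in Proposition \ref{DemProps}(2), which supplies the geometric reason that $V_\lambda^w$ genuinely carries the relevant $SL_2$-action.
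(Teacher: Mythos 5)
Your argument is exactly the one the paper intends: Proposition \ref{DemProps}(2) makes $V_\lambda^w$ an honest $G_{\alpha_{i_1}}$-module (compatibly with the $T$-action), and complete reducibility then gives $\op{char}(V_\lambda^w)$ as a genuine, non-virtual sum of $\mathfrak{sl}_{2,\alpha_{i_1}}$-characters. The paper states the corollary as an immediate consequence of Proposition \ref{DemProps} without a written proof, and your write-up fills in precisely those details.
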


\section{Convex Hulls}\label{sectionconvexitysetup}
In this section we establish some basic results on the convex hulls of the weights appearing nontrivially in Demazure characters. Fix a dominant weight $\lambda$ and a $w\in W$. 
	Let $\op{wt}(V_{\lambda}^w)$ denote the set of weights $\mu$ such that $V_{\lambda}^w(\mu) \neq 0$. 
We consider the convex hulls $P_{\lambda}^w := \op{conv}(\op{wt}(V_{\lambda}^w)) \subset X^*(T)_\Q$ and $P_{w}^\le:= \op{conv}(\{u\lambda \, : \,  u \leq w\}) \subset X^*(T)_\Q$. Note that, by Proposition \ref{DemProps}(1), we get an immediate containment $P_{w}^\le \subseteq P_{\lambda}^w$. Our aim is to show that in fact $P_\lambda^w = P_{w}^\le$. 

Following the notation of \cite{BGG}, for $w\in W$ and $s_\gamma$ a root reflection, we use $s_\gamma w \to w$ to mean that $s_\gamma w \le w$ and $\ell(s_\gamma w) =  \ell(w)-1$. 
We will need the following standard result, which is a corollary of \cite{BGG}*{Lemma 2.5}:

\begin{lemma}\label{pqrs}
Suppose that $s_i w \to w$. Then $s_{i}\{u \, : \,  u \leq w\}=\{u \, : \,  u \leq w\}$. 
\end{lemma}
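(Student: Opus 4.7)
The plan is to deduce this directly from the lifting property of the Bruhat order, which is precisely the content of \cite{BGG}*{Lemma 2.5}. Recall that the lifting property says: whenever $s_i w \to w$ and $u \leq w$, then regardless of whether $s_i u > u$ or $s_i u < u$, one has $s_i u \leq w$ (and in the case $s_i u < u$, one additionally has $s_i u \leq s_i w$).

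First I would observe that the assumption $s_i w \to w$ is exactly the hypothesis of the lifting property. Applying it to an arbitrary $u \leq w$, I conclude that $s_i u \leq w$ as well. This proves the containment
\[
s_i \{u : u \leq w\} \subseteq \{u : u \leq w\}.
\]
The reverse inclusion then comes for free from the fact that $s_i$ is an involution: if $v \leq w$, then applying the same argument to $u = s_i v$ shows that $v = s_i u$ lies in $s_i \{u : u \leq w\}$. Equivalently, $s_i$ acts as a bijection on $W$ and restricts to a self-map of the finite set $\{u : u \leq w\}$, so it must be a bijection of that set.

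There is no real obstacle here; the entire content is packaged into the lifting property, and the lemma is a one-line consequence once one has it. The only thing to be careful about is making sure the hypothesis $s_i w \to w$ (length decreasing by exactly one under left multiplication by $s_i$) matches the hypothesis of the version of the lifting property being invoked; if the cited BGG lemma is phrased for right multiplication, one either passes to the right-handed version by an inversion argument or cites the standard two-sided form of the lifting property directly.
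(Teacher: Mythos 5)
Your proof is correct and matches the paper's approach: the paper simply states this lemma as a corollary of \cite{BGG}*{Lemma 2.5} (the lifting property), and your argument is exactly that deduction spelled out, with the containment followed by the involution/bijection observation.
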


Let $s_{i_1} \dots s_{i_k}$ be a reduced word for $w$, which we fix for the remainder of this section. 

\begin{lemma} 
  The $s_{i_1}$-action on $X^*(T)_\Q$ induces an order-two automorphism on both $P_{\lambda}^w$ and $P_{w}^\le$. 
\end{lemma}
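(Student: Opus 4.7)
The plan is to handle the two polytopes separately, with each case reducing to a result already recorded in the excerpt. Since $s_{i_1}^2 = e$ in $W$, its induced linear action on $X^*(T)_\Q$ automatically has order (dividing) two, so it suffices to show that this action carries each polytope into itself.

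First I would treat $P_w^{\le}$. Since $s_{i_1}\cdots s_{i_k}$ is a reduced expression for $w$, left-multiplication by $s_{i_1}$ shortens $w$, i.e.\ $s_{i_1} w \to w$ in the sense of \cite{BGG}. Applying Lemma \ref{pqrs} directly gives $s_{i_1}\{u : u \leq w\} = \{u : u \leq w\}$, so left-multiplication by $s_{i_1}$ permutes this set of Weyl group elements. Applying $\lambda$ then shows that $s_{i_1}$ permutes the vertex set $\{u\lambda : u \leq w\}$, and hence fixes its convex hull $P_w^\le$ setwise. This handles the first polytope.

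Next I would handle $P_\lambda^w$. Here the relevant tool is Corollary \ref{sl2-stable}: $\op{char}(V_\lambda^w)$ is a nonnegative integer combination of characters of finite-dimensional $\mathfrak{sl}_{2,\alpha_{i_1}}$-modules. Every finite-dimensional $\mathfrak{sl}_{2,\alpha_{i_1}}$-module has $T$-character supported on $\alpha_{i_1}$-strings $e^\mu + e^{\mu - \alpha_{i_1}} + \cdots + e^{s_{i_1}\mu}$, which are manifestly $s_{i_1}$-invariant. Therefore $\op{char}(V_\lambda^w)$ itself is $s_{i_1}$-invariant as an element of $\Z[X^*(T)]$, so $s_{i_1}$ permutes the set $\op{wt}(V_\lambda^w)$ (even preserving multiplicities). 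Passing to the convex hull, $s_{i_1}$ preserves $P_\lambda^w$ setwise.

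Combining these two observations with $s_{i_1}^2 = e$ yields the claimed order-two automorphisms. I do not anticipate any genuine obstacle here: the work has already been done in Lemma \ref{pqrs} and Corollary \ref{sl2-stable}, and the lemma is really a formal bookkeeping step that sets up the eventual induction (presumably on $\ell(w)$) to compare $P_\lambda^w$ with $P_w^\le$. The only point deserving care is ensuring that $s_{i_1} w \to w$ holds for the fixed reduced expression starting with $s_{i_1}$, which is immediate from the length hypothesis on reduced words.
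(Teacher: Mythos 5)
Your proof is correct and follows the same route as the paper: invoke Corollary~\ref{sl2-stable} to see that $\op{wt}(V_\lambda^w)$ is $s_{i_1}$-stable, invoke Lemma~\ref{pqrs} to see that $\{u\lambda : u\le w\}$ is $s_{i_1}$-stable, and then pass to convex hulls. You simply spell out the $\alpha_{i_1}$-string argument underlying the $s_{i_1}$-invariance of $\mathfrak{sl}_{2,\alpha_{i_1}}$-characters, which the paper leaves implicit.
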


\begin{proof}
	If a finite set $P \subset V$ is stable under a linear map $f:V \rightarrow V$, then so is $\op{conv}(P)$. Since $s_{i_1}\cdot \op{wt}(V_{\lambda}^w)=\op{wt}(V_{\lambda}^w)$ by Corollary \ref{sl2-stable}, $P_\lambda^w$ is the convex hull of an $s_{i_1}$-stable finite set. Likewise, since $s_{i_1}\{u\lambda \, : \,  u \leq w\}=\{u\lambda \, : \,  u \leq w\}$ by Lemma \ref{pqrs}, $P_{w}^\le$ is also the convex hull of an $s_{i_1}$-stable finite set. 
\end{proof}

Now consider an arbitrary (not necessarily integral) weight $\mu \in X^*(T)_\Q$ and the line $\mu+\mathbb{Q}\alpha_{i_1}$. Let $I=\{k: \mu+ k\alpha_{i_1}\in P_{w}^\le\}\subseteq \Q$ be the interval parametrizing the intersection $(\mu+\mathbb{Q}\alpha_{i_1})\cap P_{w}^\le$; say $I=[k'',k']$ and set $\mu' = \mu+k' \alpha_{i_1}$ and $\mu'' = \mu+k'' \alpha_{i_1}$. Thus $(\mu+\mathbb{Q}\alpha_{i_1})\cap P_{w}^\le$ is the line segment between $\mu'$ and $\mu''$. See Figure \ref{hey1}.
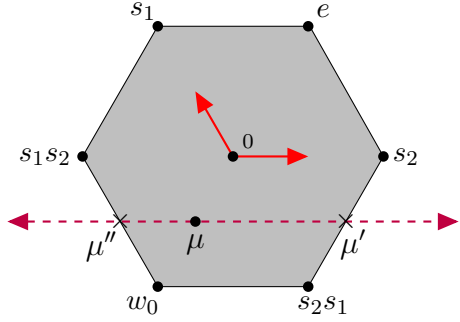
\begin{figure}
\begin{center}
\begin{tikzpicture}[>=triangle 45]

\draw[fill=lightgray] (2.0,0) -- (1.02,1.73205) -- (-1.0, 1.73205) -- (-2.0,0) -- (-1.0,-1.73205) -- (1.0,-1.73205) -- (2.0,0);
\draw[thick,red,->] (0,0) -- (1,0);
\draw[thick,red,->] (0,0) -- (-0.5,0.866025);

\draw[thick,dashed,purple,<->] (-3,-0.866025) -- (3,-0.866025);
\fill (-0.5,-0.866025) circle [radius=0.07];
\fill (0,0) circle [radius=0.07];

\fill (1,1.73205) circle [radius=0.07];
\fill (-1,1.73205) circle [radius=0.07];
\fill (1,-1.73205) circle [radius=0.07];
\fill (-1,-1.73205) circle [radius=0.07];
\fill (2,0) circle [radius=0.07];
\fill (-2,0) circle [radius=0.07];
\node at (-1.5,-0.866025) {$\times$};
\node at (1.5,-0.866025) {$\times$};
\node at (1.2,1.93205) {\small $e$};
\node at (-1.2,1.93205) {\small $s_1$};
\node at (1.2,-1.98205) {\small $s_2s_1$};
\node at (-1.2,-1.98205) {\small $w_0$};
\node at (2.3,0) {\small $s_2$};
\node at (-2.5,0) {\small $s_1s_2$};

\node at (0.2,0.2) {\tiny $0$};
\node at (-0.5,-1.166025) {$\mu$};

\node at (1.6,-1.166025) {$\mu'$};
\node at (-1.75,-1.196025) {$\mu''$};

\end{tikzpicture}
\caption{\label{hey1} \small In type $A_2$, the Weyl polytope $P_\lambda^{s_1s_2s_1}$. Here $\lambda = 2\omega_1+2\omega_2$. The red arrows depict simple roots. }
\end{center}
\end{figure}

\begin{corollary}\label{min-0-max}
	Assume that $(\mu+\mathbb{Q}\alpha_{i_1} )\cap P_{w}^\le$ is nonempty. Then $s_{i_1} \mu'=\mu''$. So if $\mu'=\mu''$ then $s_{i_1}\mu'=\mu'$.
	If $\mu \in P_{w}^\le$, then $k'' \leq 0 \leq k'$.
\end{corollary}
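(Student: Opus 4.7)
My plan is to combine the preceding lemma, which establishes that $P_w^\le$ is stable under $s_{i_1}$, with the elementary observation that the line $\mu+\Q\alpha_{i_1}$ is itself $s_{i_1}$-stable, since $s_{i_1}\mu-\mu = -\langle\mu,\alpha_{i_1}^\vee\rangle\,\alpha_{i_1}\in \Q\alpha_{i_1}$. Consequently the intersection $(\mu+\Q\alpha_{i_1})\cap P_w^\le$ is also $s_{i_1}$-stable.

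I would then make the action of $s_{i_1}$ on this line completely concrete by parametrizing points as $\mu+k\alpha_{i_1}$. Setting $c:=\langle\mu,\alpha_{i_1}^\vee\rangle$, a direct computation gives
\[
s_{i_1}(\mu+k\alpha_{i_1}) \;=\; s_{i_1}\mu - k\alpha_{i_1} \;=\; \mu-(k+c)\alpha_{i_1},
\]
so in the parameter $k$ the transformation is the affine involution $k\mapsto -c-k$, i.e., a reflection about $k=-c/2$. Under the standing hypothesis that the intersection is nonempty, it is a closed rational interval $[k'',k']$, and a reflection that preserves such a segment must interchange its endpoints (collapsing to the identity case when $k''=k'$). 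In either case $s_{i_1}\mu'=\mu''$, proving the first assertion. The second assertion is then immediate by specialization: if $\mu'=\mu''$ then $s_{i_1}\mu'=\mu''=\mu'$.

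For the final claim, if $\mu\in P_w^\le$ then $\mu$ itself lies in $(\mu+\Q\alpha_{i_1})\cap P_w^\le$ and corresponds to the parameter $k=0$; hence $0\in I=[k'',k']$, giving $k''\le 0\le k'$. I do not anticipate a real obstacle here: the entire corollary reduces to the fact that the simple reflection $s_{i_1}$ restricts to a nontrivial affine involution on any line parallel to $\alpha_{i_1}$, and such an involution must swap the endpoints of any invariant closed segment.
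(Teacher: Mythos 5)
Your proof is correct and takes the same approach as the paper: observe that both the polytope and the line through $\mu$ in the $\alpha_{i_1}$ direction are $s_{i_1}$-stable, hence so is their intersection, and an involution preserving a closed segment swaps its endpoints; the final claim follows because $\mu$ corresponds to $k=0$. The extra explicit parametrization $k\mapsto -c-k$ is a harmless elaboration that the paper leaves implicit.
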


\begin{proof}
Since $P_\lambda^w$ and $\mu + \Q \alpha_{i_1}$ are both $s_{i_1}$-stable, so is their intersection. Therefore $s_{i_1}$ interchanges the endpoints.  
Moreover, if $\mu\in P_\lambda^w$, then $\mu$ is contained in this interval; hence $0\in I$. 
\end{proof}

\begin{remark}
	The previous corollary also applies to $P_{\lambda}^w$ since it only relies on $s_{i_1}P_{w}^\le=P_{w}^\le$, which is also true for $P_{\lambda}^w$.
\end{remark}

We also point out that if $\mu'\in P_{w}^\le$ is maximal in the interval $(\mu'+\Q\alpha_{i_1})\cap P_{w}^\le$, then $\mu'$ is a convex combination of a (usually strict) subset of the vertices of $P_{w}^\le$. In fact, this result is independent of the fact that $s_{i_1}w \to w$. 

\begin{lemma}\label{maximalpoint}
Suppose $\mu' \in P_{w}^\le$ is maximal with respect to the $\alpha_{i_1}$ direction: $\mu'+k\alpha_{i_1}\in P_{w}^\le \implies k\le 0$. Then 
$$
\mu' = \sum_{\tiny \begin{array}{c}u\le w\\ u^{-1}\alpha_{i_1}\succ 0\end{array}} b_u u\lambda,
$$
where each $b_u\in \Q_{\ge 0}$ and $\sum b_u = 1$. 
\end{lemma}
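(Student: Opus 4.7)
The plan is to start from an arbitrary convex expression $\mu'=\sum_{u\le w}a_u\,u\lambda$, with $a_u\ge 0$ and $\sum a_u=1$, and partition the indexing set as $\{u\le w\}=V\sqcup V^c$ with $V=\{u\le w:u^{-1}\alpha_{i_1}\succ 0\}$ and $V^c=\{u\le w:u^{-1}\alpha_{i_1}\prec 0\}$. The goal is to rewrite $\mu'$ as a convex combination supported only on $V$, paying only an $\alpha_{i_1}$-multiple in the process, and then use the maximality of $\mu'$ in the $\alpha_{i_1}$-direction to force that multiple to vanish.

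The combinatorial engine is the standard fact that $u^{-1}\alpha_{i_1}\prec 0$ iff $\ell(s_{i_1}u)=\ell(u)-1$. So for $u\in V^c$ one gets $s_{i_1}u<u\le w$, and moreover $(s_{i_1}u)^{-1}\alpha_{i_1}=-u^{-1}\alpha_{i_1}\succ 0$, showing $s_{i_1}u\in V$. In particular $s_{i_1}$ gives an injection $V^c\hookrightarrow V$. Moreover,
\[
(s_{i_1}u)\lambda=u\lambda-\langle u\lambda,\alpha_{i_1}^\vee\rangle\alpha_{i_1}=u\lambda+c_u\alpha_{i_1},
\]
with $c_u:=-\langle\lambda,u^{-1}\alpha_{i_1}^\vee\rangle\ge 0$, since $u^{-1}\alpha_{i_1}^\vee$ is a negative coroot and $\lambda$ is dominant.

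Substituting $u\lambda=(s_{i_1}u)\lambda-c_u\alpha_{i_1}$ for every $u\in V^c$ rearranges the sum to
\[
\mu'=\sum_{u\in V}b_u\,u\lambda-C\alpha_{i_1},\qquad b_u:=a_u+a_{s_{i_1}u},\qquad C:=\sum_{u\in V^c}a_u c_u\ \ge\ 0,
\]
with the convention $a_{s_{i_1}u}=0$ if $s_{i_1}u\not\le w$. Clearly each $b_u\ge 0$. Using that $s_{i_1}$ is a bijection from $V^c$ onto $\{v\in V:s_{i_1}v\le w\}$, a direct accounting gives $\sum_{u\in V}b_u=\sum_{u\in V}a_u+\sum_{u'\in V^c}a_{u'}=\sum_{u\le w}a_u=1$.

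To finish, rewrite the displayed identity as $\sum_{u\in V}b_u\,u\lambda=\mu'+C\alpha_{i_1}$. The left-hand side is a convex combination of vertices of $P_w^\le$, hence lies in $P_w^\le$. By maximality of $\mu'$ in the $\alpha_{i_1}$-direction we must have $C\le 0$; combined with $C\ge 0$ this forces $C=0$, whence $\mu'=\sum_{u\in V}b_u\,u\lambda$, as desired. The only nontrivial step is the bookkeeping verification that $\sum b_u=1$, which is where the involution $s_{i_1}$ matching vertices from $V^c$ to vertices from $V$ gets used; everything else is routine.
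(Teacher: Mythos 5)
Your proof is correct and uses essentially the same idea as the paper's: reflect each vertex $u\lambda$ with $u^{-1}\alpha_{i_1}\prec 0$ to $(s_{i_1}u)\lambda$, paying a nonnegative multiple of $\alpha_{i_1}$, and invoke maximality to force the total $\alpha_{i_1}$-shift to vanish. Your global bookkeeping with the aggregate coefficient $C$ is a slightly cleaner organization than the paper's pair-by-pair substitution, since it sidesteps the case analysis the paper needs to handle when $\langle\lambda,v^{-1}\alpha_{i_1}^\vee\rangle=0$.
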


\begin{proof}
Start by writing $\mu' = \sum_{u\le w} b_u u\lambda$, where each $b_u\ge 0$ and $\sum b_u = 1$. Let $u\le w$ be arbitrary such that $u^{-1}\alpha_{i_1}\prec 0$. We argue that $b_u$ is either equal to $0$ or can be set to $0$ by adjusting $b_{s_{i_1}u}$. 

Set $v=s_{i_1}u$. By hypothesis on $u$, $v \xrightarrow{\alpha_{i_1}} u$; hence $v\le w$. Examining the contributions of $u$ and $v$ to the convex combination, we find 
\begin{align*}
\mu' &= b_{v} v \lambda + b_{u} \left(s_{i_1} v\lambda\right)+\sum_{\tiny \begin{array}{c}x\le w\\ x\not\in \{u,v\}\end{array}} b_x x\lambda \\
&= b_{v} v \lambda + b_{u} \left( v\lambda - \langle \lambda, v^{-1}\alpha_{i_1}^\vee \rangle \alpha_{i_1}\right)+\sum_{\tiny \begin{array}{c}x\le w\\ x\not\in \{u,v\}\end{array}} b_x x\lambda \\
&=(b_v+b_u)v\lambda - (b_u \langle \lambda, v^{-1}\alpha_{i_1}^\vee \rangle) \alpha_{i_1}+\sum_{\tiny \begin{array}{c}x\le w\\ x\not\in \{u,v\}\end{array}} b_x x\lambda .
\end{align*}
The last line shows that $\mu' + (b_u \langle \lambda, v^{-1}\alpha_{i_1}^\vee \rangle) \alpha_{i_1}$ is once again a convex combination of vertices of $P_{w}^\le$, hence is contained in $P_{w}^\le$. By maximality of $\mu'$, this forces $b_u \langle \lambda, v^{-1}\alpha_{i_1}^\vee \rangle\le 0$. 

On the other hand, $b_u\ge 0$ and $v^{-1}\alpha_{i_1} \succ 0$, so $b_u\langle \lambda, v^{-1}\alpha_{i_1}^\vee\rangle \ge 0$. Therefore $b_u \langle \lambda, v^{-1}\alpha_{i_1}^\vee \rangle$ must equal $0$, so either 
\begin{enumerate}
\item $b_u=0$, in which case no further action is needed, or
\item  $\langle \lambda, v^{-1}\alpha_{i_1}\rangle = 0$, in which case $v\lambda = s_{i_1}v\lambda = u\lambda$. So $b_v v\lambda + b_u u\lambda = (b_v + b_u) v \lambda + 0 u \lambda$, and we can therefore assume $b_u = 0$.  \qedhere
\end{enumerate}
\end{proof}

Now we reinstate the assumption that $s_{i_1}w\to w$, which was not required for the proof of Lemma \ref{maximalpoint}. 
This provides a natural-seeming corollary: the maximum $\mu'$ does not change from $P_{s_{i_1}w}^\le $ to $P_w^\le$. See Figure \ref{hey2}. 

\begin{corollary}\label{maxs-are-equal}
Keeping the hypotheses of the previous lemma, assume also that $s_{i_1}w\to w$.  Then $\mu'$ belongs to $P_{s_{i_1}w}^\le$, and thus 
$$
\op{max}((\mu'+\Q\alpha_{i_1})\cap P_{w}^\le) = \mu' = \op{max}((\mu'+\Q\alpha_{i_1})\cap P_{s_{i_1}w}^\le). 
$$
\end{corollary}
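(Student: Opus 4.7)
The plan is to combine the previous lemma's convex decomposition of $\mu'$ with the lifting property of the Bruhat order. By the previous lemma, we may write
\[
\mu' \;=\; \sum_{\substack{u \le w \\ u^{-1}\alpha_{i_1}\succ 0}} b_u\, u\lambda,
\]
with $b_u\ge 0$ and $\sum b_u=1$. To show $\mu'\in P_{s_{i_1}w}^{\le}$, it suffices to prove that every index $u$ appearing in this sum already satisfies $u\le s_{i_1}w$.

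This is precisely where I would invoke the lifting property of the Bruhat order: if $s$ is a simple reflection with $sw < w$, then for any $u\in W$ with $su>u$, one has $u\le w \iff u\le sw$. In our setting, the hypothesis $s_{i_1}w\to w$ gives $s_{i_1}w<w$, and the condition $u^{-1}\alpha_{i_1}\succ 0$ is equivalent to $s_{i_1}u>u$. So the lifting property applies directly and yields $u\le s_{i_1}w$ for every $u$ that contributes to the decomposition of $\mu'$. Hence $\mu'$ is a convex combination of vertices $\{u\lambda : u\le s_{i_1}w\}$, i.e.\ $\mu'\in P_{s_{i_1}w}^{\le}$.

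For the final chain of equalities, observe that $\{u : u\le s_{i_1}w\}\subseteq\{u : u\le w\}$ forces the containment $P_{s_{i_1}w}^{\le}\subseteq P_w^{\le}$, and hence
\[
(\mu'+\Q\alpha_{i_1})\cap P_{s_{i_1}w}^{\le} \;\subseteq\; (\mu'+\Q\alpha_{i_1})\cap P_w^{\le}.
\]
Thus the maximum on the left is at most the maximum on the right, which is $\mu'$ by hypothesis; but $\mu'$ itself lies in the smaller set by what we just showed, so both maxima coincide with $\mu'$.

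The only nontrivial step is the appeal to the lifting property; I would simply cite it from a standard reference (e.g.\ Humphreys' \emph{Reflection Groups and Coxeter Groups}, or Bj\"orner--Brenti's \emph{Combinatorics of Coxeter Groups}). No additional input beyond the previous lemma is required.
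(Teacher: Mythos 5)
Your proof is correct and follows essentially the same route as the paper: both arguments take the convex decomposition of $\mu'$ from the previous lemma and show that every contributing $u$ satisfies $u \le s_{i_1}w$ via a Bruhat-order lifting argument (the paper cites \cite{BGG}*{Lemma 2.5}, you cite the equivalent lifting property from a Coxeter-group reference). Your explicit justification of the final chain of equalities via the containment $P_{s_{i_1}w}^\le \subseteq P_w^\le$ is a welcome bit of bookkeeping that the paper leaves implicit.
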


\begin{proof}
Suppose $u\le w$ and $u^{-1}\alpha_{i_1}\succ 0$. Then $u\to s_{i_1}u$, so in particular $u\ne w$. By \cite{BGG}*{Lemma 2.5}, 
$u\le s_{i_1}w $ or $s_{i_1}u< s_{i_1}w$. Either way, $u\le s_{i_1}w$. So $\mu'$ is in fact a convex combination of vertices of $P_{s_{i_1}w}^\le$. 
\end{proof}

\begin{figure}
\begin{center}
\begin{tikzpicture}[>=triangle 45]

\draw[thick,dashed] (2.02,0) -- (1.02,1.74205) -- (-1.02, 1.74205) -- (-2.02,0) -- (-1.02,-1.74205) -- (1.02,-1.74205) -- (2.02,0);
\draw[fill=lightgray] (2,0) -- (1,1.73205) -- (-1, 1.73205)  -- (1,-1.73205) -- (2,0);

\draw[thick,dashed,purple,-] (-1.5,-0.866025) -- (1.5,-0.866025);
\fill (-0.5,-0.866025) circle [radius=0.07];

\fill (1,1.73205) circle [radius=0.07];
\fill (-1,1.73205) circle [radius=0.07];
\fill (1,-1.73205) circle [radius=0.07];
\fill (-1,-1.73205) circle [radius=0.07];
\fill (2,0) circle [radius=0.07];
\fill (-2,0) circle [radius=0.07];

\node at (0.5,-0.866025) {$\times$};
\node at (1.5,-0.866025) {$\times$};
\node at (-1.5,-0.866025) {$\times$};
\node at (1.2,1.93205) {\small $e$};
\node at (-1.2,1.93205) {\small $s_1$};
\node at (1.2,-1.98205) {\small $s_2s_1$};
\node at (-1.2,-1.98205) {\small $w_0$};
\node at (2.3,0) {\small $s_2$};
\node at (-2.5,0) {\small $s_1s_2$};

\node at (-0.5,-1.166025) {$\mu$};
\node at (1.8,-0.866025) {$\mu'$};
\node at (0.9,-0.566025) {\tiny $\mu''(v)$};
\node at (-1.1,-0.566025) {\tiny $\mu''(w)$};
\end{tikzpicture}
\caption{\label{hey2} \small In type $A_2$, two polytopes $P_\lambda^{v}$ (shaded) and $P_\lambda^{w}$ (dashed outline), where $w = s_1s_2s_1$ and $v = s_2s_1 = s_1w$. The two $\mu''$s differ but $\mu'$ is common to both polytopes. }
\end{center}
\end{figure}
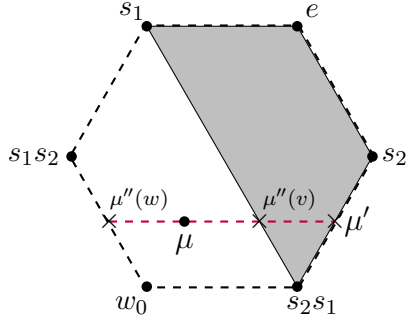

We now show that the polytopes $P_\lambda^w$ and $P_{w}^\le$ coincide. 

\begin{theorem}\label{convexequal}
	We have an equality of polytopes $P_{\lambda}^w=P_{w}^\le$.
\end{theorem}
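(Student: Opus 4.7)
The plan is to prove the two inclusions separately. For the easy direction $P_w^\le \subseteq P_\lambda^w$, I would appeal directly to Proposition \ref{DemProps}(1): for each $u \le w$ the embedding $V_\lambda^u \hookrightarrow V_\lambda^w$ shows that $u\lambda \in \op{wt}(V_\lambda^u) \subseteq \op{wt}(V_\lambda^w)$, and taking convex hulls yields $P_w^\le \subseteq P_\lambda^w$.

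For the reverse inclusion $P_\lambda^w \subseteq P_w^\le$ I would argue by induction on $\ell(w)$; the base case $w = e$ is trivial since $V_\lambda^e = \C v_\lambda$ gives $P_\lambda^e = \{\lambda\} = P_e^\le$. For the inductive step, use the fixed reduced word $w = s_{i_1}\cdots s_{i_k}$ and set $v := s_{i_2}\cdots s_{i_k}$, so that $\ell(v) = \ell(w) - 1$ and $s_{i_1}w \to w$; Theorem \ref{dem-char} then gives
\[
\op{char}(V_\lambda^w) \;=\; D_{i_1}\op{char}(V_\lambda^v) \;=\; \sum_{\nu \in \op{wt}(V_\lambda^v)} m_\nu\, D_{i_1}(e^\nu),
\]
where $m_\nu = \dim V_\lambda^v(\nu)$.

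The key observation powering the induction is the following support property of the Demazure operator: for any weight $\nu$, every exponent appearing with nonzero coefficient in $D_{i_1}(e^\nu)$ lies on the closed segment $[\nu, s_{i_1}\nu] \subset X^*(T)_\Q$. This is an immediate case-by-case check against the three branches of the definition of $D_{i_1}$ (in the last branch, the supported exponents even lie in the sub-segment $[\nu + \alpha_{i_1},\, s_{i_1}\nu - \alpha_{i_1}]$). Given this, if $\mu \in \op{wt}(V_\lambda^w)$ then the nonzero coefficient of $e^\mu$ in the displayed sum forces at least one summand $D_{i_1}(e^\nu)$ to contribute nontrivially to $e^\mu$, and the support property places $\mu \in [\nu, s_{i_1}\nu]$ for some $\nu \in \op{wt}(V_\lambda^v)$. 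Consequently
\[
\op{wt}(V_\lambda^w) \;\subseteq\; \bigcup_{\nu \in \op{wt}(V_\lambda^v)} [\nu, s_{i_1}\nu] \;\subseteq\; \op{conv}\bigl(P_\lambda^v \cup s_{i_1}P_\lambda^v\bigr).
\]
Applying the inductive hypothesis $P_\lambda^v = P_v^\le$ together with the implication $u \le v \Rightarrow u \le w$ yields $P_\lambda^v \subseteq P_w^\le$. Since $s_{i_1}w \to w$, the order-two automorphism lemma established earlier in this section (via Lemma \ref{pqrs}) gives $s_{i_1}P_w^\le = P_w^\le$, whence $s_{i_1}P_\lambda^v \subseteq P_w^\le$. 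The convexity of $P_w^\le$ then forces $P_\lambda^w \subseteq P_w^\le$, completing the induction.

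I do not foresee a serious obstacle: the sole technical ingredient is the support property of $D_{i_1}$, which reduces to a direct inspection of the three branches; the remainder is a single-step reduction along the leftmost simple reflection of a reduced word, matched against the $s_{i_1}$-stability of $P_w^\le$ already in hand. It is worth noting that this argument does not invoke Corollary \ref{sl2-stable} or the finer facet-direction results (Corollaries \ref{min-0-max} and \ref{maxs-are-equal}); those presumably support the more refined polytopal statements in subsequent sections.
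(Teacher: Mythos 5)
Your proof is correct, and it takes a genuinely different (and arguably cleaner) route than the paper's. The paper's inductive step works through the maximal intersection point $\mu'$ of the $\alpha_{i_1}$-line with $P_{\lambda}^{s_{i_2}\cdots s_{i_n}}$, invoking Corollaries \ref{min-0-max} and \ref{maxs-are-equal} to transport $\mu'$ to $P_{w}^\le$ and to bound $\mu$ by $s_{i_1}\mu'$ and $\mu'$. Your argument instead isolates a direct support property of the Demazure operator --- that $D_{i_1}(e^\nu)$ is supported on the closed segment $[\nu,\, s_{i_1}\nu]$, verified by inspection of the three defining cases --- and this single observation replaces the facet-tracking machinery entirely. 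The containment chain
\[
\op{wt}(V_\lambda^w)\;\subseteq\;\bigcup_{\nu\in\op{wt}(V_\lambda^v)}[\nu,s_{i_1}\nu]\;\subseteq\;\op{conv}\bigl(P_\lambda^v\cup s_{i_1}P_\lambda^v\bigr)\;\subseteq\;P_w^\le
\]
then closes via the inductive hypothesis, the inclusion $P_v^\le\subseteq P_w^\le$, the $s_{i_1}$-stability of $P_w^\le$ (Lemma \ref{pqrs}), and convexity. The ``forces at least one summand to contribute nontrivially'' step is sound: a nonzero coefficient in a sum requires a nonzero contribution from some summand, regardless of possible cancellations. What your route buys is a self-contained inductive step with one elementary lemma; what the paper's route buys is that the corollaries it develops (notably \ref{maxs-are-equal}, which feeds into Lemma \ref{nls}) are reused later in the saturation arguments of Sections \ref{IntegralPoints}--\ref{sectiontypesbcd}, whereas your support lemma would be new overhead if added to the paper.
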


\begin{proof} 

As before, we have the containment $P_{w}^\le \subseteq P_{\lambda}^w$ by Proposition \ref{DemProps}(1). We now show the reverse inclusion; for this, it suffices to show that $\mu\in \op{wt}(V_\lambda^w)\implies \mu \in P_{w}^\le$. We use induction on $\ell(w)$. 
	
	We have $P_{w}^\le=P_{\lambda}^w$ for all words $w$ of length 0 and length 1.
	
	Assume it is true for all words of length $\leq n-1$, and 
	let $w$ be of length $n$. Assume that $\mu \in \op{wt}(V_{\lambda}^w)$. Write $w=s_{i_1} \cdots s_{i_n}$. If $\mu \in \op{wt}(V_{\lambda}^{s_{i_2} \cdots s_{i_n}})$, by the inductive assumption, then $\mu \in P_{s_{i_2} \cdots s_{i_n}}^\le \subset P_{w}^\le$. Otherwise, we define $\mu'=\mu+k'\alpha_{i_1}$ to be the maximal intersection with  $P_\lambda^{s_{i_2}\cdots s_{i_n}}$ as above (such an intersection exists directly by $D_{i_1} \op{char} V_\lambda^{s_{i_2}\cdots s_{i_n}} = \op{char}V_\lambda^w$).

	By inductive hypothesis, $P_\lambda^{s_{i_2}\cdots s_{i_n}} = P_{s_{i_2}\cdots s_{i_n}}^\le$, so $\mu'$ is also $\op{max}((\mu+\Q \alpha_{i_1}) \cap P_{s_{i_2} \cdots s_{i_n}}^\le)$.  By Corollary \ref{maxs-are-equal}, $\mu'$ is also $\op{max}((\mu+\Q \alpha_{i_1}) \cap P_{w}^\le)$ (see Figure \ref{hey2}). Then Corollary \ref{min-0-max} implies that $s_{i_1}\mu' \le \mu \le \mu'$. Since $\mu$ is on the line segment joining $\mu'$ and $s_{i_1}\mu'$, $\mu \in P_{w}^\le$. 
\end{proof}

Now that we have established that there is no difference between the convex hull of the weights of a Demazure module and the convex hull of the extremal weights, we will drop the notation $P_{w}^\le$ and always use $P_{\lambda}^w$. In what follows, we will use Geometric Invariant Theory to deduce sufficient inequalities for $P_\lambda^w$. Moreover, some straightforward combinatorics yields necessary conditions on $P_{w}^\le$. 
Thus $P_\lambda^w = P_{w}^\le$ produces necessary and sufficient conditions determining the Demazure polytope $P^{w}_{\lambda}$. 

\begin{remark}
As an immediate corollary of the agreement of $P_\lambda^w$ and $P_{w}^\le$, we obtain that the vertices of $P_\lambda^w$ are precisely $\{u\lambda : u\le w\}$. This was conjectured in type $A_r$ by Monical, Tokcan, and Yong \cite{MTY}*{Conjecture 3.13}, and first proven (in type $A_r$) by Fan and Guo \cite{FG}.
\end{remark}

\section{Geometric setup for inequalities} 
\label{GITsection}

In this section, we follow an established tradition (see \cite{Kly,BK,RessI} or the survey \cite{Kumar}) of using Geometric Invariant Theory to impose constraints on representation-theoretic nonvanishing problems. After we review the relevant results from GIT, we indicate how to apply them in the setting of Schubert varieties equipped with the natural $T$-action. The result is a series of linear inequalities (depending on $w$) on weights $\lambda,\mu$ that, when satisfied, suffice to ensure that $\mu\in P_\lambda^w$. 

Suppose $X$ is a projective, normal $G$-variety, where $G$ is a reductive group, and suppose $\mathbb{L}$ is a $G$-linearized line bundle on $X$. Geometric Invariant Theory gives criteria for deciding when $H^0(X,\mathbb{L}^{\otimes n})^G\ne 0$ for some $n\ge 1$; these \emph{Hilbert-Mumford} criteria involve one-parameter subgroups (OPS) $\delta:\C^* \to G$, which are a priori infinite in number. Much work has been done, in various settings,  to reduce to a finite set of criteria. 

\subsection{Certain results from GIT}

Only the case $G=T$ is relevant for us, so assume now that $X$ has a $T$-action and $\mathbb{L}$ is $T$-linearized. We will use foundational results of Kempf \cite{Kempf} and Hesselink \cite{H}, which were further developed by Ressayre \cite{RessI}. For ease of exposition, we'll generally follow the notation of \cite{RessI}*{\S 2}. 

\begin{definition}
A point $x\in X$ is called \emph{semistable} with respect to $\mathbb{L}$ if for some $n>0$, there is a section $\sigma\in H^0(X,\mathbb{L}^{\otimes n})^T$ such that $\sigma(x)\ne0$. If $x$ is not semistable, it is called \emph{unstable}. The set of all semistable points of $X$ is denoted $X^{ss}$. 
\end{definition}

\begin{definition}
Let $x\in X$ and $\delta\in X_*(T)$ a OPS. We define an integer $\mu^{\mathbb{L}}(x,\delta)$ as in \cite{GIT}. First, let $x_0$ be the limit point $x_0=\lim_{t\to 0}\delta(t).x$, which exists since $X$ is projective. Via $\delta$, there is an induced $\C^*$-action on the fibre $\mathbb{L}_{x_0}$, which is necessarily given by some integer $r$ such that 
$$
\delta(t).z = t^r z
$$
for $t\in \C^*$, $z\in \mathbb{L}_{x_0}$. Set 
$$
\mu^{\mathbb{L}}(x,\delta):=-r.
$$
\end{definition}

(This definition of $\mu^{\mathbb{L}}(x,\delta)$ differs from Mumford's convention in \cite{GIT, BK} by $-1$, but agrees with \cite{RessI}.) 
The Hilbert-Mumford criterion of \cite{GIT} states that 
$$
x\text{ is semistable w.r.t. }\mathbb{L}\iff \mu^{\mathbb{L}}(x,\delta)\le0 \text{ for every $\delta\in X_*(T)$}.
$$

Let $||\cdot||$ be a 
Euclidean norm on $X_*(T)\otimes \Q$.
For $x\in X$, set 
$$
M^{\mathbb{L}}(x) = \sup_{\delta\in X_*(T),\delta\ne0} \left\{\frac{\mu^{\mathbb{L}}(x,\delta)}{||\delta||}\right\}.
$$
Kempf showed that this supremum is always uniquely attained (with an analogous statement for general $G$). 

\begin{theorem}[\cite{Kempf}]\label{kempf-thm}
Fix $x\in X$. There exists a unique (up to rescaling) $\delta\in X_*(T)$ such that $M^{\mathbb{L}}(x) = \frac{\mu^{\mathbb{L}}(x,\delta)}{||\delta||}$. 
\end{theorem}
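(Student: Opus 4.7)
My plan is to reinterpret $\mu^{\mathbb{L}}(x,\cdot)$ as the support function of a polytope of $T$-weights, and then reduce Kempf's optimization to a classical nearest-point problem in convex geometry.

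First, I would fix $n\gg 0$ so that $\mathbb{L}^{\otimes n}$ embeds $X$ into $\P(V)$ where $V=H^0(X,\mathbb{L}^{\otimes n})^*$, write $x=[v]$, and decompose $v = \sum_{\chi}v_\chi$ into its $T$-weight components with finite support $\Lambda(x)\subset X^*(T)$. Mumford's formula (see, e.g., \cite{GIT}*{Ch.~2}) then gives, modulo the sign change between conventions discussed above,
\[
\mu^{\mathbb{L}}(x,\delta) \;=\; \tfrac{1}{n}\min_{\chi\in\Lambda(x)}\langle\chi,\delta\rangle \;=\; \min_{p\in P}\langle p,\delta\rangle,
\]
where $P := \tfrac{1}{n}\op{conv}(\Lambda(x))\subset X^*(T)_\Q$ is a rational polytope. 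In particular, $\mu^{\mathbb{L}}(x,\cdot)$ is concave, piecewise linear, positively $1$-homogeneous, and equals the support functional of $-P$ up to a sign -- in any case, it is an elementary piecewise-linear function of $\delta$ encoded by the polytope $P$.

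Next, I would use the $W$-invariant Euclidean inner product on $X_*(T)\otimes \Q$ together with its dual to identify $X_*(T)\otimes\Q$ and $X^*(T)\otimes\Q$ isometrically. If $0\in P$, then by Hilbert-Mumford $x$ is semistable and $M^{\mathbb{L}}(x)\le 0$, and the uniqueness claim is degenerate. Assuming instead $0\notin P$, let $p^*\in P$ denote the unique point of minimal norm; uniqueness is immediate from strict convexity of $\|\cdot\|$ restricted to the convex set $P$. Set $\delta^* := p^*$ under the identification. The first-order optimality condition for the nearest point yields $\langle p-p^*,p^*\rangle\ge 0$ for every $p\in P$, so $\langle p,\delta^*\rangle\ge \|p^*\|^2$ with equality at $p=p^*$; hence $\mu^{\mathbb{L}}(x,\delta^*) = \|p^*\|^2$ and $\mu^{\mathbb{L}}(x,\delta^*)/\|\delta^*\| = \|p^*\|$. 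For any other nonzero $\delta'$,
\[
\mu^{\mathbb{L}}(x,\delta') \;=\; \min_{p\in P}\langle p,\delta'\rangle \;\le\; \langle p^*,\delta'\rangle \;\le\; \|p^*\|\,\|\delta'\|
\]
by Cauchy-Schwarz, so $\mu^{\mathbb{L}}(x,\delta')/\|\delta'\|\le \|p^*\|$. Equality in Cauchy-Schwarz forces $\delta'$ to be a positive scalar multiple of $\delta^*$, establishing both existence and uniqueness up to rescaling.

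The main technical obstacle I anticipate is the first step: establishing Mumford's formula cleanly in the paper's setting -- verifying independence of the chosen power $\mathbb{L}^{\otimes n}$, reconciling the paper's sign convention for $\mu^{\mathbb{L}}$ with Mumford's (which introduces the factor of $-1$ flagged in the text), and handling the case where $\mathbb{L}$ is not itself very ample. One must also check that the rational point $\delta^*$ can be rescaled to an integral cocharacter, which follows from rationality of the vertices of $P$. Once the identification of $\mu^{\mathbb{L}}(x,\cdot)$ as a support function is in place, the remainder is standard Euclidean convex geometry, and uniqueness is essentially just the equality case in Cauchy-Schwarz.
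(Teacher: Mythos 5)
The paper does not prove this theorem; it simply cites Kempf \cite{Kempf} (whose original result is the far harder statement for arbitrary reductive $G$, of which the abelian case $G=T$ used here is the elementary core). So there is no proof in the paper for your argument to be compared against, but your blind proof of the torus case is correct and is essentially the standard one. A few remarks. Your sign bookkeeping comes out right for the paper's convention: with $\mu^{\mathbb{L}}(x,\delta) = -r$ one indeed gets $\mu^{\mathcal{O}(1)}([v],\delta) = \min_{\chi:v_\chi\ne 0}\langle\chi,\delta\rangle$ on projective space, and semistability $\iff 0\in P$ falls out of the separating-hyperplane argument, so the formula $\mu^{\mathbb{L}}(x,\delta)=\min_{p\in P}\langle p,\delta\rangle$ needs no further correction. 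The variational inequality $\langle p-p^*,p^*\rangle\ge 0$ for the nearest point, the evaluation $\mu^{\mathbb{L}}(x,\delta^*)/\|\delta^*\|=\|p^*\|$, and the Cauchy--Schwarz bound with its equality case together give both existence and uniqueness cleanly; strict positivity of $\|p^*\|$ when $0\notin P$ is what rules out the negative-multiple branch of the Cauchy--Schwarz equality. Rationality of $\delta^*$ holds as you say: $p^*$ is the orthogonal projection of $0$ onto the affine span of some face of a rational polytope, so it is rational whenever the $W$-invariant form is chosen rational, and can then be cleared to an integral cocharacter. The one place to tighten the exposition is the ampleness assumption: the relevant $\mathbb{L}=\C_\mu\otimes L_w(\lambda)$ is in general only globally generated, not ample, so $\mathbb{L}^{\otimes n}$ yields a morphism $\phi:X_w\to\P(V)$ rather than an embedding; but $\mu^{\mathbb{L}}(x,\delta)=\tfrac1n\mu^{\mathcal{O}(1)}(\phi(x),\delta)$ still holds, and the rest of the argument is unaffected. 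You anticipate this in the final paragraph, but it deserves to be promoted from a flagged obstacle to a completed step.
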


Following \cite{RessI}*{Section 2.3.3}, we say a primitive OPS $\delta\in X_*(T)$ is \emph{adapted to $x$ and $\mathbb{L}$} if $M^{\mathbb{L}}(x) = \frac{\mu^{\mathbb{L}}(x,\delta)}{||\delta||}$. By Theorem \ref{kempf-thm}, we get a set map $X\to X_*(T)$ sending $x$ to the unique primitive $\delta$ adapted to $x$. One can ask about the fibres of this set map. 

\begin{definition}[\cite{RessI}*{Section 2.5}]
Let $d\in \R_{>0}$ and $\tau\in X_*(T)$. Set 
$$
S_{d,\tau}^{\mathbb{L}} = \{x\in X: M^{\mathbb{L}}(x) = d, \text{ and $\tau$ is adapted to $x$ and $\mathbb{L}$}\}.
$$
\end{definition}

Note that each $S_{d,\tau}^{\mathbb{L}}$, if nonempty, consists of unstable points of $X$. Clearly there is a disjoint union 
\begin{align}\label{strata}
X = X^{ss}\sqcup \bigsqcup_{d,\tau} S_{d,\tau}^{\mathbb{L}}.
\end{align}

Hesselink proved that (\ref{strata}) is a finite stratification by locally-closed $T$-invariant subsets \cite{H} (actually he showed a similar statement for general $G$).
As a corollary: 

\begin{corollary}\label{open-dense-stratum}
If $X$ is irreducible and has no semistable points, then there is a unique open stratum $S_{d,\tau}^{\mathbb{L}}$. 
\end{corollary}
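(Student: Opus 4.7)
The plan is to use the two hypotheses — irreducibility of $X$ and finiteness of the stratification in (\ref{strata}) — to extract both existence and uniqueness of an open stratum, with no heavy GIT input beyond what has already been cited from Hesselink.

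For existence, since $X^{ss}=\emptyset$ the space $X$ decomposes as the finite disjoint union $\bigsqcup_{d,\tau}S_{d,\tau}^{\mathbb{L}}$. Passing to closures gives
$$X \;=\; \bigcup_{d,\tau}\overline{S_{d,\tau}^{\mathbb{L}}},$$
a finite union of closed subsets, so irreducibility forces one of these closures — say $\overline{S_{d_0,\tau_0}^{\mathbb{L}}}$ — to equal $X$. Writing the locally closed stratum $S_{d_0,\tau_0}^{\mathbb{L}}$ as $U\cap C$ with $U$ open and $C$ closed, density forces $C=X$, and hence $S_{d_0,\tau_0}^{\mathbb{L}}=U$ is open in $X$. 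This is the standard observation that a dense locally closed subset of an irreducible space is itself open.

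For uniqueness, suppose $S_{d_1,\tau_1}^{\mathbb{L}}$ and $S_{d_2,\tau_2}^{\mathbb{L}}$ are two distinct open strata. Both are nonempty open subsets of the irreducible space $X$, so their intersection is nonempty; but distinct strata in (\ref{strata}) are by construction pairwise disjoint, a contradiction. I expect no substantive obstacle in this argument — the corollary is a purely topological consequence of irreducibility together with the finite locally closed stratification supplied by Hesselink's theorem. The only point that deserves to be invoked explicitly is the ``dense locally closed implies open on an irreducible space'' principle used in the existence step.
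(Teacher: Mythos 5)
Your proof is correct, and since the paper states this corollary without proof (it is offered as an immediate consequence of Hesselink's finite locally closed stratification), your argument is exactly the natural one being implicitly invoked. Both the existence step (a finite union of closures covers the irreducible $X$, so one closure is all of $X$, and a dense locally closed subset of an irreducible space is open) and the uniqueness step (two nonempty open subsets of an irreducible space meet, contradicting disjointness of the strata) are sound and complete.
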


\subsection{Line bundles on $X_w$}

Now let $X = X_w$ with its left $T$-action. For any two weights $\mu,\lambda\in X^*(T)$, there is a $T$-linearized line bundle $\C_{\mu} \otimes L(\lambda)$ on $G/B$ with total space
$$
G\times_B \C_{-\lambda}
$$
and $T$-action given by 
$$
t.[g,z] := [tg,\mu(t)z].
$$
By the Borel-Weil theorem for a dominant weight $\lambda \in X^*(T)^+$, $H^0(G/B,\C_{\mu}\otimes L(\lambda))$ is identified with the $T$-representation $\C_{\mu}\otimes V_\lambda^*$, i.e., with $(\C_{-\mu} \otimes V_\lambda)^*$. Therefore 
$$
H^0(G/B,\C_{\mu}\otimes L(\lambda))^T\ne0
$$
if and only if $V_\lambda$ possesses a nontrivial $T$-weight space with weight $\mu$. Likewise, restricting to $X_w$, 
$$
H^0(X_w,\C_{\mu}\otimes L_w(\lambda))^T\ne0
$$
if and only if $V_\lambda^w(\mu)\ne 0$.  More to the point, 
\begin{lemma}\label{borel-weil}
There exists an integer $n\ge1$ such that 
$$H^0(X_w, \left(\C_{\mu}\otimes L_w(\lambda)\right)^{\otimes n})^T \ne 0$$
if and only if $\mu\in P_\lambda^w$. 
\end{lemma}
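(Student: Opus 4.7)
The plan is to translate the sheaf-theoretic statement into a question about $T$-weight spaces of Demazure modules and then to construct explicit $T$-invariant sections as products of sections dual to the extremal weight vectors $v_{u\lambda}$. Using the identification $V_{n\lambda}^w \cong H^0(X_w,L_w(n\lambda))^\ast$ recalled in Section~\ref{DemMod}, dualizing and tensoring by $\C_{n\mu}$ yields the $T$-equivariant equivalence
\[
H^0(X_w,(\C_{\mu}\otimes L_w(\lambda))^{\otimes n})^T \ne 0 \iff V_{n\lambda}^w(n\mu)\ne 0.
\]
Moreover, the vertex description of Theorem~\ref{convexequal} is homogeneous in $\lambda$, so $P_{n\lambda}^w = nP_\lambda^w$.

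The forward direction is then immediate: if $V_{n\lambda}^w(n\mu)\ne0$ for some $n\ge 1$, then $n\mu \in P_{n\lambda}^w = nP_\lambda^w$, and hence $\mu \in P_\lambda^w$. For the converse, given $\mu \in P_\lambda^w$, use Theorem~\ref{convexequal} to write $\mu = \sum_{u \le w} a_u (u\lambda)$ with $a_u \in \Q_{\ge 0}$ and $\sum_u a_u = 1$, and choose $n \ge 1$ so that $b_u := n a_u \in \Z_{\ge 0}$ for every $u \le w$. For each such $u$, the inclusion $V_\lambda^u \hookrightarrow V_\lambda^w$ from Proposition~\ref{DemProps}(1) shows that the one-dimensional extremal weight space $V_\lambda^w(u\lambda) = \C v_{u\lambda}$ is nonzero; let $\sigma_u \in H^0(X_w,L_w(\lambda))$ denote the linear functional on $V_\lambda^w$ dual to $v_{u\lambda}$ in a chosen weight basis. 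Then $\sigma_u$ is a nonzero section of $T$-weight $-u\lambda$.

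Form the product $\sigma := \prod_{u \le w} \sigma_u^{b_u} \in H^0(X_w,L_w(n\lambda))$, which is a section of $T$-weight $-\sum_u b_u (u\lambda) = -n\mu$. Since $X_w$ is irreducible and each $\sigma_u$ is a nonzero section of a line bundle, the zero locus of $\sigma_u$ is a proper closed subvariety of $X_w$; the union over the finitely many $u$ with $b_u>0$ remains proper closed, so $\sigma \ne 0$. Tensoring $\sigma$ with a nonzero element of $\C_{n\mu}$ yields the desired nonzero $T$-invariant section of $(\C_{\mu}\otimes L_w(\lambda))^{\otimes n}$. The only substantive step is the nonvanishing of $\sigma$, which reduces to the irreducibility of $X_w$; this route sidesteps the heavier Atiyah--Guillemin--Sternberg/Mumford--Ness convexity theorem for Hamiltonian torus actions, which would give an alternative but less elementary proof.
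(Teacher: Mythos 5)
Your proof is correct and takes essentially the same route as the paper: the forward direction via the homogeneity $P_{n\lambda}^w = nP_\lambda^w$ from Theorem~\ref{convexequal}, and the converse by clearing denominators in a convex combination and tensoring the (unique up to scalar) sections dual to the extremal weight vectors $v_{u\lambda}$ for $u\le w$. The only cosmetic difference is that the paper keeps each $\sigma_v$ already $T$-invariant as a section of $\C_{v\lambda}\otimes L_w(\lambda)$ rather than tensoring with $\C_{n\mu}$ at the end, and it leaves the nonvanishing of the product (which you correctly reduce to the integrality of $X_w$) implicit.
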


\begin{proof}
($\Rightarrow$)
For such an integer $n$, $n\mu$ is a nontrivial weight in $V_{n\lambda}^w$. Thus $n\mu$ belongs to $P_{n\lambda}^w$. Since $P_{n\lambda}^w = \op{conv}\{nv\lambda:v\le w\}$, $n\mu$ is a convex combination of $\{nv\lambda:v\le w\}$. Thus $\mu$ is a convex combination of $\{v\lambda: v\le w\}$ and $\mu\in P_\lambda^w$.

($\Leftarrow$)
Start with $\mu = \sum_{v\le w}b_v v\lambda$, where $b_v\in \Q^{\ge0}$ and $\sum b_v = 1$. Finding a common denominator, 
$n\mu = \sum_{v\le w} c_v v\lambda$, where $c_v\in \Z^{\ge 0}$ and $\sum c_v =n$. For each $v\le w$, since $\dim V^w_\lambda(v\lambda) = 1$, fix a nonzero section $\sigma_v\in H^0(X_w, \C_{v\lambda}\otimes L_w(\lambda))^T$. The tensor product of these sections produces a nonzero global section 
$$
\bigotimes_{v\le w} \sigma_v^{\otimes c_v} \in H^0(X_w, \C_{n\mu} \otimes L_w(n\lambda))^T,
$$
as desired. 
\end{proof}

\begin{remark} Lemma \ref{borel-weil} is a consequence of more general results due to Brion and Procesi \cite{BP}; we include a proof specific to our setting for the convenience of the reader.
\end{remark}

\subsection{The open stratum in $X_w$}

Any $\delta\in X_*(T)$ gives rise to a parabolic subgroup of $G$, 
$$
P(\delta) = \{g\in G: \lim_{t\to 0} \delta(t)g\delta(t)^{-1} \text{ exists in $G$}\},
$$
as well as an associated Levi subgroup,
$$
M(\delta) = \{g\in G: \lim_{t\to 0} \delta(t)g\delta(t)^{-1} = g\} = \{g\in G:\forall t,  \delta(t)g\delta(t)^{-1} = g \}.
$$

\begin{lemma}\label{veryeasy}
Suppose $Y$ is any irreducible (not necessarily closed) subvariety of $X_w$. Then there exists a $q\in W$ such that 
$$
Y\cap P(\delta)qB/B\cap X_w
$$
is dense in $Y$. 
\end{lemma}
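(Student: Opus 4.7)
The plan is to exhibit a finite decomposition of $X_w$ into locally closed pieces indexed by (a subset of) $W$, pull it back to $Y$, and then invoke irreducibility to locate a dense stratum. The key geometric input is the existence of a Bruhat-like decomposition of $G/B$ under the left action of $P(\delta)$.

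First I would set up the orbit stratification: since $P(\delta)$ contains $T$, I can choose a Borel $B^*\subset P(\delta)$, and then the classical Bruhat decomposition gives
$$
G/B = \bigsqcup_{v\in W} B^* vB/B
$$
as a finite disjoint union of locally closed cells. Each $P(\delta)$-orbit on $G/B$ is a union of such $B^*$-cells, so there are only finitely many $P(\delta)$-orbits and each is locally closed in $G/B$. Each such orbit contains a $T$-fixed point $qB/B$ for some $q\in W$, so after picking one representative $q$ from each orbit I obtain
$$
G/B \;=\; \bigsqcup_{q\in \mathcal{S}} P(\delta)\,qB/B
$$
for a finite $\mathcal{S}\subset W$, where each piece is locally closed.

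Next I would intersect this decomposition with the closed subvariety $X_w$ and then with $Y$ to obtain
$$
Y \;=\; \bigsqcup_{q\in \mathcal{S}} Y_q, \qquad Y_q \;:=\; Y\cap P(\delta)\,qB/B \cap X_w,
$$
a finite disjoint union in which each $Y_q$ is locally closed in $Y$. Letting $Z_q$ denote the closure of $Y_q$ inside $Y$, this yields $Y = \bigcup_{q\in\mathcal{S}} Z_q$, expressing the irreducible space $Y$ as a finite union of closed subsets. Irreducibility forces $Z_q = Y$ for at least one $q$, and for that $q$ the set $Y_q = Y\cap P(\delta)qB/B\cap X_w$ is dense in $Y$, as required.

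The only substantive step is the first one, namely verifying that the left $P(\delta)$-orbits on $G/B$ give a finite stratification by locally closed subvarieties with $T$-fixed point representatives $qB/B$; once that decomposition is in hand the rest is a routine application of the fact that an irreducible variety cannot be written as a finite union of proper closed subsets. I do not anticipate any serious obstacle beyond being careful that $P(\delta)$ need not contain our fixed Borel $B$, which is why passing to a Borel $B^*\subset P(\delta)$ is the cleanest way to guarantee the finiteness and local closedness of the orbits.
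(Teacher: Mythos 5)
Your proof is correct and takes essentially the same approach as the paper: both rest on the finite decomposition of $G/B$ into $P(\delta)$-orbits $P(\delta)qB/B$ (indexed by coset representatives $q$ for $W_{P(\delta)}\backslash W$), then intersect with $Y$ and invoke the fact that an irreducible variety cannot be a finite union of proper closed subsets. You spend more effort justifying the existence and finiteness of that orbit decomposition via a Borel $B^*\subset P(\delta)$ and its Bruhat cells, while the paper treats this as a standard fact; note also that local closedness of the strata plays no role — the irreducibility argument only needs a finite cover by arbitrary subsets.
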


\begin{proof}
For each $q\in W_\delta\setminus W$, set $Y_q:=Y\cap P(\delta)qB/B\cap X_w$ and let $\overline{Y_q}$ denote the closure of $Y_q$ (in $Y$). Clearly $Y = \bigcup Y_q$; hence also 
$
Y = \bigcup \overline{Y_q}.
$
By irreducibility, there is some $q$ such that $Y = \overline{Y_q}$. 
\end{proof}

\begin{lemma}\label{Schubert-translate}
Suppose $P$ is a standard parabolic subgroup of $G$, and let $v\in W$. Set $P':=vPv^{-1}$ and let $q$ and $w$ be Weyl group elements such that $P'qB/B\cap X_w$ is dense in $X_w$. Then there exists $t\in W$ such that $v^{-1}X_w\subseteq X_t$; moreover $t = v^{-1}rq$ for some $r\in vW_{P}v^{-1}$. 
\end{lemma}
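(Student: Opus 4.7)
The plan is to push the density condition on $P'qB/B\cap X_w$ through the action of $v^{-1}$ on $G/B$, and then recognize the resulting irreducible $B$-invariant closure as a Schubert variety via the parabolic Bruhat decomposition.

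First, since $X_w$ is closed and irreducible and $P'qB/B\cap X_w$ is dense in $X_w$, one has $X_w\subseteq \overline{P'qB/B}$. Applying the homeomorphism of $G/B$ given by left multiplication by $v^{-1}$ and using $v^{-1}P'=v^{-1}(vPv^{-1})=Pv^{-1}$, we obtain
\[
v^{-1}X_w \;\subseteq\; v^{-1}\overline{P'qB/B} \;=\; \overline{Pv^{-1}qB/B}.
\]

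Next I would identify this closure as a Schubert variety. The set $Pv^{-1}qB/B$ is a single $P$-orbit on $G/B$, hence locally closed and irreducible, and it is $B$-stable since $B\subseteq P$. Its closure is therefore an irreducible closed $B$-stable subvariety of $G/B$. The standard consequence of the Bruhat decomposition $G/B=\bigsqcup_{x\in W}BxB/B$ is that any closed $B$-stable subvariety is a union of Schubert varieties (indexed by a Bruhat order ideal), and irreducibility forces this to be a single Schubert variety $X_t$. Hence $v^{-1}X_w\subseteq X_t$.

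To extract the claimed form of $t$, I would compare two open dense subsets of $X_t$: the top cell $BtB/B$ and the orbit $Pv^{-1}qB/B$ itself (open in its closure because $P$-orbits are locally closed). Their intersection inside $X_t$ is nonempty, so $BtB\cap Pv^{-1}qB\ne\emptyset$ in $G$. By the parabolic Bruhat decomposition $G=\bigsqcup_{W_P\backslash W}PxB$, the double cosets $PtB$ and $Pv^{-1}qB$ must coincide, giving $t\in W_P\cdot v^{-1}q$. Writing $t=yv^{-1}q$ with $y\in W_P$ and setting $r:=vyv^{-1}\in vW_Pv^{-1}$ yields $t=v^{-1}rq$, as required.

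The main obstacle, though classical, is step two: the identification of the irreducible $B$-stable closure of a $P$-orbit with a single Schubert variety, which rests on the standard fact that irreducible closed $B$-stable subvarieties of $G/B$ are exactly the Schubert varieties. Modulo this, the argument reduces to tracking the $P\backslash G/B$-class of $v^{-1}q$ under the parabolic Bruhat decomposition, and conjugation by $v$ converts the element of $W_P$ so obtained into the required element of $vW_Pv^{-1}$.
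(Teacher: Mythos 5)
Your proof is correct, but it takes a genuinely different route from the paper. The paper works ``from the inside out'': it first decomposes $P'qB/B = \bigcup_{r\in vW_Pv^{-1}} vBv^{-1}rqB/B$ into $B$-orbit pieces (translating the usual $BwP=\bigcup_{\hat r\in W_P}Bw\hat r B$), then uses irreducibility of $X_w$ to single out one $r$ with $vBv^{-1}rqB/B\cap X_w$ dense, and concludes directly that $X_w\subseteq \overline{vBv^{-1}rqB/B}=vX_{v^{-1}rq}$. This yields the containment and the form of $t$ in one stroke, with no appeal to the classification of $B$-stable closed subvarieties. You instead work ``from the outside in'': you immediately pass to $v^{-1}\overline{P'qB/B}=\overline{Pv^{-1}qB/B}$, invoke the structural fact that an irreducible closed $B$-stable subvariety of $G/B$ is a Schubert variety $X_t$, and then recover the constraint on $t$ a posteriori by intersecting the two dense opens $BtB/B$ and $Pv^{-1}qB/B$ inside $X_t$ and invoking the parabolic Bruhat decomposition $G=\bigsqcup_{W_P\backslash W}PxB$. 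Your version has the mild advantage of identifying the smallest such $X_t$ (namely $\overline{Pv^{-1}qB/B}$ itself, so $v^{-1}X_w$ sits in the specific Schubert variety indexed by the maximal Bruhat representative of $W_P\cdot v^{-1}q$), whereas the paper's $t$ depends on the choice of $r$ and need not be this maximal one; the paper's version has the advantage of staying entirely within explicit $B$-orbit computations and not needing the classification of $B$-invariant irreducible closed subsets. Both proofs are sound.
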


\begin{proof}
Recall that 
$$
B w P = \bigcup_{v\in W_P} BwvB
$$
for any $w\in W$. So 
$$
B q^{-1} v P = \bigcup_{\hat r \in W_P} B q^{-1} v \hat r^{-1}B = \bigcup_{r \in vW_Pv^{-1}} B q^{-1}r^{-1}v B,
$$
where we reindex according to $r = v \hat r v^{-1}$ for the last equality. Therefore 
$$
vPv^{-1}qB/B = \bigcup_{r\in vW_Pv^{-1}} vBv^{-1}rqB/B.
$$

Since $vPv^{-1}qB/B\cap X_w$ is dense inside of $X_w$, 
there must exist some $r\in vW_Pv^{-1}$ such that $vBv^{-1}rqB/B\cap X_w$ is dense in $X_w$. Now observe that 
$$
X_w = \overline{vBv^{-1}rqB/B\cap X_w} \subseteq \overline{vBv^{-1}rqB/B} = vX_{v^{-1}rq}.
$$
With $t = v^{-1}rq$, the result follows. 
\end{proof}

Now we are ready for the main result of this section. 

\begin{theorem}
Suppose $\mu \in X^*(T)$ and $\lambda \in X^*(T)^+$ are such that $\mathbb{L}:=\C_\mu\otimes L_w(\lambda)$ has no semistable points in $X_w$ with respect to $T$. That is, 
$$
H^0(X_w,\mathbb{L}^{\otimes n})^T = 0
$$
for all $n\ge 1$. Then there is a fundamental coweight $x_i$ and a pair of Weyl group elements $u,v\in W$ such that 
\begin{enumerate}
\item $\langle \mu,vx_i\rangle  < \langle \lambda, ux_i\rangle$
\item $v^{-1}X_w\subseteq X_{u^{-1}}$.
\end{enumerate}
\end{theorem}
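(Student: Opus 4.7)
The plan is to apply the Hesselink stratification to $X_w$, extract the adapted one-parameter subgroup (OPS) of the unique unstable open stratum, and then translate its numerical data into the stated inequality via Lemma \ref{Schubert-translate}. Since $X_w$ is irreducible and $\mathbb{L}$ has no semistable points, Corollary \ref{open-dense-stratum} produces a unique open dense stratum $S_{d,\tau}^{\mathbb{L}}\subset X_w$ whose adapted indivisible OPS is some $\tau \in X_*(T)$. Write $\tau = v\tau'$ with $\tau'$ dominant and expand $\tau' = \sum_j c_j x_j$ with $c_j \in \Z_{\ge 0}$; the Weyl element $v$ will be the one appearing in the theorem, and $i$ will be chosen from $\{j:c_j>0\}$.

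First I would produce condition (2). Hesselink's theorem asserts that the stratum $S_{d,\tau}^{\mathbb{L}}$ is $P(\tau)$-invariant, so Lemma \ref{veryeasy} applied to $Y = S_{d,\tau}^{\mathbb{L}}$ supplies $q \in W$ with $P(\tau)\cdot qB/B \cap X_w$ dense in $X_w$. For any $i$ with $c_i > 0$, the inclusion of standard parabolics $P(\tau') \subseteq P_i$ holds, so $P(\tau) \subseteq vP_iv^{-1}$ and consequently $vP_iv^{-1}\cdot qB/B\cap X_w$ is also dense in $X_w$. Lemma \ref{Schubert-translate} then yields $t = v^{-1}rq$ with $r \in vW_{P_i}v^{-1}$ and $v^{-1}X_w \subseteq X_t$; set $u := t^{-1} = q^{-1}r^{-1}v$, so that $v^{-1}X_w \subseteq X_{u^{-1}}$.

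Next I would compute $\mu^{\mathbb{L}}$ at a generic point of the stratum. For any $p \in P(\tau)$, since $qB/B$ is $T$-fixed,
$$
\lim_{t\to 0} \tau(t)\cdot pqB/B = \bigl(\lim_{t\to 0}\tau(t) p \tau(t)^{-1}\bigr)\cdot qB/B = p_0\cdot qB/B, \qquad p_0 \in M(\tau).
$$
Because the $\tau$-weight on the fibre of a $\tau$-linearized line bundle is locally constant on connected components of the $\tau$-fixed locus, and $p_0 qB/B$ and $qB/B$ lie in the same $M(\tau)$-orbit inside this fixed locus, these weights coincide. At the $T$-fixed point $qB/B$ the fibre of $\mathbb{L}$ is a one-dimensional $T$-representation of weight $\mu - q\lambda$, giving $\mu^{\mathbb{L}}(pqB/B,\tau) = \langle q\lambda - \mu, \tau\rangle$. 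Since $pqB/B$ lies in the open unstable stratum, this quantity is strictly positive, and expanding $\tau = \sum_j c_j v x_j$ yields $\sum_j c_j \langle q\lambda - \mu, vx_j\rangle > 0$; so I may select an index $i$ with $c_i > 0$ and $\langle q\lambda - \mu, vx_i\rangle > 0$, and feed this $i$ into the construction above.

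Finally I would derive condition (1). Writing $r = v\tilde r v^{-1}$ with $\tilde r \in W_{P_i}$, and using that $W_{P_i}$ fixes the fundamental coweight $x_i$, I compute $r^{-1}vx_i = vx_i$, so
$$
\langle \lambda, ux_i\rangle = \langle \lambda, q^{-1}r^{-1}vx_i\rangle = \langle q\lambda, vx_i\rangle.
$$
Therefore (1) is exactly the inequality $\langle q\lambda - \mu, vx_i\rangle > 0$ established above. The main technical subtlety I anticipate is the justification that the $\tau$-weight on the fibre of $\mathbb{L}$ is constant across the $M(\tau)$-orbit of $qB/B$ within the $\tau$-fixed locus; once this is in hand, the remainder reduces to routine bookkeeping with Bruhat cells, maximal parabolics, and Weyl-equivariance of the pairing.
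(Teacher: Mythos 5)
Your proof is correct and follows essentially the same route as the paper's: open unstable stratum via Corollary \ref{open-dense-stratum}, density via Lemma \ref{veryeasy}, computation of $\mu^{\mathbb{L}}$ at a generic point of $P(\tau)qB/B\cap X_w$, and translation via Lemma \ref{Schubert-translate}; your reorganization of choosing the index $i$ \emph{before} applying Lemma \ref{Schubert-translate} with $P=P_i$ in place of $P(\tau')$ is a harmless variation, since $P(\tau')\subseteq P_i$ and $W_{P_i}$ fixes $x_i$. One caution: your assertion that Hesselink's theorem makes $S_{d,\tau}^{\mathbb{L}}$ stable under $P(\tau)$ is not justified in this setting — $P(\tau)$ is a parabolic subgroup of $G$, which does not act on $X_w$ (only $T$ does), so the $P$-invariance in Hesselink's stratification theorem (which concerns the group actually acting) does not apply — but the claim is unused, since Lemma \ref{veryeasy} needs only that $S_{d,\tau}^{\mathbb{L}}$ be irreducible, which holds because it is open and dense in the irreducible $X_w$.
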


\begin{proof}
By Corollary \ref{open-dense-stratum}, we can find some $\delta\in X_*(T)$ and $d\in \R_{>0}$ such that $S_{d,\delta}^{\mathbb{L}}$ is open inside $X_w$. In particular, there is a dense subset $U\subset X_w$ such that 
$$
x\in U\implies \mu^{\mathbb{L}}(x,\delta)>0.
$$
By Lemma \ref{veryeasy}, let $q\in W$ be such that $U\cap P(\delta)qB/B\cap X_w$ is dense in $U$. Suppose $x\in \Omega_q$. By definition, $-\mu^{\mathbb{L}}(x,\delta)$ is the exponent of the $\delta$-induced $\C^*$-action on the fibre of $\mathbb{L}$ over $y:=\lim_{t\to 0} \delta(t)x$. Since $y$ belongs to $M(\delta)qB/B$, we readily calculate that 
$$
\mu^{\mathbb{L}}(x,\delta) = \langle \lambda,q^{-1}\delta\rangle-\langle \mu,\delta\rangle.
$$
Therefore $\langle \mu,\delta\rangle< \langle \lambda,q^{-1}\delta\rangle$. 
Now, we may find a $v\in W$ and $\delta_0:T\to \C^*$ such that $\delta = v \delta_0$ and $\delta_0$ is a dominant OPS with respect to $B$. Since $U\cap P(\delta)B/B\cap X_w$ is dense in $U$, $P(\delta)B/B\cap X_w$ is dense in $X_w$. By Lemma \ref{Schubert-translate}, we get
$$
v^{-1}X_w\subseteq X_t,
$$
where $t = v^{-1}rq$ for some $r\in vW_{P(\delta_0)}v^{-1}$ (i.e., $r$ fixes $\delta$). Take $u:=t^{-1}$; then (2) is satisfied. 

Moreover, $q^{-1}\delta = q^{-1}r^{-1}\delta = q^{-1}r^{-1}v \delta_0 = u \delta_0$. Therefore 
$$
\langle \mu,v \delta_0\rangle< \langle \lambda,u \delta_0\rangle.
$$
In particular, since $\delta_0$ is a nonnegative linear combination of fundamental coweights $x_i$, there exists some $x_i$ such that 
$$
\langle \mu,vx_i\rangle < \langle \lambda, ux_i\rangle.
$$
So the triple $(x_i, u,v)$ satisfy (1) as well. 
\end{proof}

Combining the theorem with Lemma \ref{borel-weil}, we obtain the following. 

\begin{corollary}\label{outside}
Suppose $\mu\not\in P_\lambda^w$. Then there exist a fundamental coweight $x_i$ and a pair of Weyl group elements $u,v\in W$ such that 
\begin{enumerate}
\item $\langle \mu,vx_i\rangle < \langle \lambda,ux_i\rangle$
\item $v^{-1}X_w\subseteq X_{u^{-1}}$.
\end{enumerate}
\end{corollary}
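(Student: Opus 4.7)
The plan is to derive the corollary as a direct contrapositive of the preceding theorem, with Lemma \ref{borel-weil} providing the bridge from the polytope statement to the GIT-theoretic hypothesis.

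First I would reduce to the case $\mu \in X^*(T)$. If $\mu \in X^*(T)_\Q \setminus P_\lambda^w$, pick $N\in \Z_{>0}$ with $N\mu \in X^*(T)$. Since $P_{N\lambda}^w = N \cdot P_\lambda^w$ (a consequence of Theorem \ref{convexequal}, as $N$-scaling the vertex set $\{v\lambda:v\le w\}$ yields the vertex set of $P_{N\lambda}^w$), we still have $N\mu \notin P_{N\lambda}^w$. The conclusion (1)--(2) for the pair $(N\lambda, N\mu)$ yields (1)--(2) for $(\lambda,\mu)$ after dividing the inequality by $N$, since (2) makes no reference to $\lambda$ or $\mu$. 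So we may assume $\mu \in X^*(T)$ from the outset.

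Next I would invoke Lemma \ref{borel-weil}: the hypothesis $\mu\notin P_\lambda^w$ says that for \emph{every} integer $n\ge 1$,
\[
H^0(X_w, (\C_\mu \otimes L_w(\lambda))^{\otimes n})^T = 0.
\]
In the language of the previous subsection, this is exactly the assertion that the $T$-linearized line bundle $\mathbb{L} := \C_\mu \otimes L_w(\lambda)$ admits no semistable points on $X_w$ with respect to $T$. We are therefore in the exact hypothesis of the preceding theorem, which produces a fundamental coweight $x_i$ and Weyl group elements $u,v$ satisfying conclusions (1) and (2). This is precisely what the corollary asserts.

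I do not expect a serious obstacle here: the genuine work (finding the open Hesselink stratum, analyzing its adapted OPS via Kempf's theorem, and tracking the Schubert-variety translation through Lemma \ref{Schubert-translate}) has already been absorbed into the preceding theorem. The only minor care needed is the passage between rational weights and integral weights, handled by clearing denominators as above, together with the observation that conclusion (2) is insensitive to this scaling.
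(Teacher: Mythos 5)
Your proof is correct and follows essentially the same route as the paper: Lemma \ref{borel-weil} translates $\mu\notin P_\lambda^w$ into the absence of semistable points for $\C_\mu\otimes L_w(\lambda)$ on $X_w$, and then the preceding theorem gives the triple $(x_i,u,v)$. The preliminary reduction from rational to integral $\mu$ via clearing denominators (using Theorem \ref{convexequal} to see $P_{N\lambda}^w = N\cdot P_\lambda^w$) is a small clarification the paper leaves implicit, and it is handled correctly.
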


\section{Inequalities defining $P_\lambda^w$}\label{ineq-comb}

Corollary \ref{outside} gives sufficient conditions for $\mu\in P_\lambda^w$. 
In this section, we will give a combinatorial reinterpretation of criterion (2) above, as well as exhibit a smaller set of inequalities of the form $\langle \mu,vx_i\rangle\le \langle \lambda,ux_i\rangle$ that are necessary and sufficient for $\mu\in P_\lambda^w$. 

First, we find it helpful to recast the geometric criterion $v^{-1}X_w\subseteq X_{u^{-1}}$ above in terms of a combinatorial operation: namely, the Demazure product on $W$. After we have recalled some basic properties of the Demazure product, we can show that the necessity of the inequalities of (1) above follows simply from the Bruhat order (Lemma \ref{bound}). 

\begin{definition}
We say that a pair of Weyl group elements $u,v\in W$ is 
``$w$-trapping''
if $v^{-1}X_w\subseteq X_{u^{-1}}$. 
\end{definition}

\subsection{Properties of the Demazure product}

Given a (not necessarily reduced) word in simple reflections $(s_{i_1}, \hdots, s_{i_k})$ one can define their Demazure product inductively by setting
$$
s_{i_1}* \hdots * s_{i_k} := s_{i_1} *( s_{i_2}* \hdots * s_{i_k})
$$
and using the rule 
$$
s_i*w := \max\{w,s_iw\}.
$$

One can check that the alternate rule $w*s_i:=\max\{w,ws_i\}$ could have been used instead, with no change. Akin to the Demazure operators $D_i$, $*$ satisfies the braid relations and $s_i*s_i = s_i$ for every $i$. 

Then one defines the product $v*w$ of arbitrary Weyl group elements by first finding (any) reduced words $(s_{i_1},\hdots,s_{i_k})$ and $(s_{j_1},\hdots,s_{j_\ell})$ for $v$ and $w$ and setting 
$$
v*w:= s_{i_1}*\hdots*s_{i_k}*s_{j_1}*\hdots*s_{j_\ell}.
$$
This clearly makes $*$ into an associative product on $W$ with identity $e$. Note also that $w_0*v = v*w_0 = w_0$ for any $v$. 

\begin{proposition}
The Demazure product $s_{i_1}*\hdots*s_{i_k}$ is equal to the maximum product $s_{i_{j_1}}\cdots s_{i_{j_\ell}}$ over all subwords. 
\end{proposition}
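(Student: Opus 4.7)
I propose to prove the proposition by induction on the length $k$ of the word, using the recursive definition of $\ast$ together with two standard properties of the Bruhat order. Throughout, ``maximum'' refers to the Bruhat order.

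\textbf{Plan.} The base cases $k=0,1$ are immediate. For the inductive step, set $w' := s_{i_2}\ast \cdots \ast s_{i_k}$; by induction $w'$ is the Bruhat-maximum of the set $W'$ of subword products of $(s_{i_2},\ldots,s_{i_k})$, and moreover $w'$ itself equals some such subword product. The subwords of $(s_{i_1},\ldots,s_{i_k})$ split as those not containing position $1$ (whose products form $W'$) and those containing position $1$ (whose products form $s_{i_1}W'$). By the recursive definition,
\[
s_{i_1}\ast\cdots\ast s_{i_k} \;=\; s_{i_1}\ast w' \;=\; \max\{w',\, s_{i_1}w'\},
\]
so it suffices to show (a) every element of $W'\cup s_{i_1}W'$ is $\le \max\{w', s_{i_1}w'\}$, and (b) $\max\{w',s_{i_1}w'\}$ itself is realized by some subword product.

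\textbf{Upper bound (step a).} Elements of $W'$ are $\le w'$ by the inductive hypothesis. For elements of $s_{i_1}W'$, fix $u\le w'$; we must show $s_{i_1}u\le \max\{w',s_{i_1}w'\}$. We invoke the following standard consequences of the lifting/subword property of Bruhat order: given a simple reflection $s$ and $u\le w'$,
\begin{itemize}
\item if $sw'>w'$, then $su\le sw'$;
\item if $sw'<w'$, then $su\le w'$.
\end{itemize}
Applied with $s=s_{i_1}$, these immediately give $s_{i_1}u\le \max\{w',s_{i_1}w'\}$ in both cases. (If $s_{i_1}w'=w'$, the inequality is again clear since then $u\le w'$ forces $s_{i_1}u\le w'$ by a direct symmetric argument, or one simply omits this degenerate case by convention.)

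\textbf{Realization (step b).} By the inductive hypothesis, $w'$ is a product of a subword of $(s_{i_2},\ldots,s_{i_k})$; this same subword produces $w'$ as a subword product of the full word. If $s_{i_1}w'>w'$, prepending position $1$ to that subword produces $s_{i_1}w'$ as a subword product. If $s_{i_1}w'\le w'$, then $\max\{w',s_{i_1}w'\}=w'$, already realized. Combining (a) and (b) yields the claim.

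\textbf{Main obstacle.} The only nontrivial input is the pair of Bruhat-order facts used in step (a); both are standard (see, e.g., Bj\"orner--Brenti) and follow from the lifting property, so no real obstruction appears. The cleanest packaging is probably to state them as a short intermediate lemma before launching the induction, so that the inductive argument itself reduces to the one-line computation $s_{i_1}\ast w' = \max\{w',s_{i_1}w'\}$ combined with these facts.
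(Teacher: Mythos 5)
Your proof is correct and follows essentially the same route as the paper's: induction on the word length, reducing to $s_{i_1}*w'$ and then bounding all subword products (both those omitting and those containing the first letter) using the lifting property of Bruhat order, which is precisely what the paper invokes via \cite{BGG}*{Lemma 2.5(a)}. The only cosmetic differences are that the paper tracks subwords by index subsets $J\subseteq[2,k]$ and takes $k=0,1,2$ as base cases, while you separate the argument into an explicit upper-bound step and a realization step; the substance is identical.
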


\begin{proof}
If $k=0,1,2$ this is trivial. 

So assume $k\ge 3$. By induction we have that $w=s_{i_2}*\hdots *s_{i_k}$ is equal to $s_{i_{j_1}}\cdots s_{i_{j_\ell}}=:s_J$ for some subset $J = \{j_1,\hdots,j_\ell\}\subseteq [2,k]$. Moreover $w\ge s_{J'}$ for any subset $J'\subseteq [2,k]$. 

If $s_{i_1}w\ge w$, then $s_{i_1}*w = s_{i_1}w = s_{1\cup J}$. We of course have $s_{i_1}w \ge s_{J'}$ for all $J'\subseteq [2,k]$. But by \cite{BGG}*{Lemma 2.5(a)}, we also have $s_{1\cup J'}\le \max\{s_{i_1}w,w\}=s_{i_1}w$ for each $J'\subseteq [2,k]$. 

Otherwise, $s_{i_1}*w = w = s_{J}$. Still $w\ge s_{J'}$ for every $J'\subseteq[2,k]$. And once again by \cite{BGG}*{Lemma 2.5(a)}, each $s_{1\cup J'}\le \max\{s_{i_1}w,w\} = w$ for $J'\subseteq [2,k]$. 
\end{proof}

By the subword property of the Bruhat order, we get the following characterization. 

\begin{corollary}\label{biproduct}
The Demazure product $v*w$ is equal to $\op{max}\{xq: x\le v, q\le w\}$. 
\end{corollary}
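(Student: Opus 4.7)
The plan is to deduce this corollary directly from the preceding proposition, which characterizes the Demazure product of a word in simple reflections as the Bruhat maximum over all subword products. To that end, I would fix reduced expressions $v = s_{i_1}\cdots s_{i_k}$ and $w = s_{j_1}\cdots s_{j_\ell}$, and use associativity of $*$ together with the definition of the Demazure product on arbitrary Weyl group elements to recast $v*w$ as the Demazure product of the concatenated word $(s_{i_1},\ldots,s_{i_k},s_{j_1},\ldots,s_{j_\ell})$. The previous proposition then gives that $v*w$ equals the Bruhat maximum of all subword products obtained from this concatenated word.

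Next I would set up a bijection-of-sets argument between subword products of the concatenation and the set $\{xq : x\le v,\ q\le w\}$. Any subword of the concatenation splits uniquely into a subword of the first $k$ letters (whose product is some $x \in W$) followed by a subword of the last $\ell$ letters (whose product is some $q \in W$). By the subword property of the Bruhat order applied to the chosen reduced expressions, $x \le v$ and $q \le w$. Conversely, given any $x \le v$ and $q \le w$, the subword property supplies reduced subwords of the two reduced expressions realizing $x$ and $q$; concatenating these reduced subwords produces a subword of the full concatenation whose product is exactly $xq$.

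Combining these two observations with the preceding proposition yields $v*w = \max\{xq : x\le v,\ q\le w\}$ in the Bruhat order; in particular this maximum is well-defined since it is witnessed by the subword expression for $v*w$ given by the proposition, while every other $xq$ in the set is a subword product of the concatenation and is hence $\le v*w$.

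The only subtlety worth flagging is that a subword product of a reduced expression need not itself be reduced, so one has to verify that every (possibly non-reduced) subword product of the first $k$ letters still represents an element $x \le v$. This is handled by extracting a reduced subword of the given subword and reapplying the subword property, so no serious obstacle arises and the corollary reduces to bookkeeping on top of the preceding proposition.
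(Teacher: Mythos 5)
Your proof is correct and follows the same route the paper intends: the paper simply says ``by the subword property of the Bruhat order,'' and your argument is exactly the fleshed-out version of that one-liner — apply the preceding proposition to the concatenation of reduced expressions for $v$ and $w$, then identify subword products of the concatenation with products $xq$, $x\le v$, $q\le w$, via the subword property (together with the deletion condition to pass from an arbitrary subword to a reduced one). The point you flag at the end — that a subword of a reduced word need not itself be reduced, so one must extract a reduced subword before invoking the subword criterion — is precisely the right detail to worry about, and you handle it correctly.
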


In fact, this maximum can always be obtained as follows. 

\begin{proposition}\label{toom}
$$
v*w = \max\{xw : x\le v\}.
$$
\end{proposition}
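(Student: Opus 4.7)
The inequality $v*w \geq \max\{xw : x \leq v\}$ is immediate from Corollary \ref{biproduct}: taking $q=w$, we have $xw \leq v*w$ for every $x \leq v$. The substantive content of the proposition is therefore that this upper bound is actually attained, i.e., that $v*w$ itself lies in $\{xw : x \leq v\}$.

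My plan is to prove by induction on $\ell(v)$ the stronger statement that $v*w = v'w$ for some $v' \leq v$. The base case $v = e$ is trivial. For the inductive step, I would write $v = s_i v_1$ as a reduced expression, so that $v_1 < v$, and apply the inductive hypothesis to get $v_1 * w = v_1' w$ for some $v_1' \leq v_1$. By associativity of $*$,
\[
v * w \;=\; s_i * v_1 * w \;=\; s_i * (v_1' w) \;\in\; \{v_1' w,\; s_i v_1' w\}.
\]
If $v*w = v_1' w$ we are done, since $v_1' \leq v_1 \leq v$. Otherwise $v*w = s_i v_1' w$, and it remains to verify that $s_i v_1' \leq v$.

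This last verification is the only real obstacle, and I would resolve it with the lifting property of the Bruhat order. If $s_i v_1' < v_1'$, then by transitivity $s_i v_1' < v_1' \leq v_1 < v$. If instead $s_i v_1' > v_1'$, then combining $v_1' \leq v_1$ with the two ascents $s_i v_1' > v_1'$ and $s_i v_1 > v_1$ (the latter holding because $v = s_i v_1$ is reduced) yields $s_i v_1' \leq s_i v_1 = v$, a standard form of the lifting lemma (compare \cite{BGG}*{Lemma 2.5}). Combined with the trivial direction from Corollary \ref{biproduct}, this would give the claimed equality.
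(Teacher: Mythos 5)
Your proof is correct and follows essentially the same route as the paper's: induct on $\ell(v)$, write $v = s_i v_1$ reduced, apply the inductive hypothesis to $v_1 * w$, and then check that whichever of $v_1'w$ or $s_iv_1'w$ equals $v*w$, the left factor is $\le v$. The paper asserts $s_i v_1' \le v$ as known (it follows from its Lemma \ref{pqrs}), while you spell it out with a two-case application of the lifting property; this is the same fact with slightly more detail, not a different argument.
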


\begin{proof}
If $v=e$, this is trivial. Otherwise, 
find some $s_jv \to v$. By inductive hypothesis, we may assume $(s_jv)*w = xw$ for some $x\le s_jv$. Now we know that 
$$
s_j*(s_jv) = v;
$$
hence 
$$
v*w = s_j*(s_jv)*w = s_j*(xw).
$$
The latter is equal to either $s_jxw$ or $xw$, whichever is greater. But we know that both $x\le v$ and $s_jx\le v$, so in either case we have $v*w = x'w$ for some $x'\le v$. 
\end{proof}

\subsection{Necessary conditions}

For any parabolic $P$, we use $W^P$ to denote the set of minimal-length coset representatives for $W/W_P$. 

We are now in a position to derive a practical set of necessary and sufficient conditions for when $\mu\in P_\lambda^w$. 
We begin by recasting the $w$-trapping pairs in a more useable framework. 

\begin{proposition}\label{eqeq}
The following are equivalent: 
\begin{enumerate}[label=(\alph*)]
\item The pair $(u,v)$ is $w$-trapping: $v^{-1}X_w \subseteq X_{u^{-1}}$. 
\item For every $q\le w$, $q^{-1} v \le u$. 
\item $w^{-1} * v \le u$. 
\end{enumerate}
\end{proposition}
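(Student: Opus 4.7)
The plan is to prove (a)$\iff$(b) by reducing the geometric containment to a condition on $T$-fixed points, then to prove (b)$\iff$(c) using the combinatorial description of the Demazure product from Proposition \ref{toom}.

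For (a)$\iff$(b), I will first observe that since $v$ normalizes $T$, left-translation by $v^{-1}$ carries $T$-stable closed subvarieties of $G/B$ to $T$-stable closed subvarieties; hence $Y := v^{-1}X_w$ is closed and $T$-stable. Using the classical identification $(X_w)^T = \{qB/B : q \le w\}$, one gets $Y^T = \{v^{-1}qB/B : q \le w\}$. The key geometric assertion to establish is
\[
Y \subseteq X_{u^{-1}} \iff Y^T \subseteq (X_{u^{-1}})^T.
\]
The forward direction is immediate. For the reverse, I will invoke the Bialynicki--Birula decomposition of $G/B$: with respect to a dominant regular one-parameter subgroup $\delta$, the attracting cell at $t \to 0$ of the fixed point $pB/B$ is exactly $BpB/B$. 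For any $y \in Y$, $y$ lies in some cell $BpB/B$, and then $pB/B = \lim_{t \to 0}\delta(t)y$ belongs to $Y$ by closedness and $T$-stability. Thus $pB/B \in Y^T \subseteq (X_{u^{-1}})^T$, which forces $p \le u^{-1}$, whence $y \in BpB/B \subseteq X_p \subseteq X_{u^{-1}}$. Applied to our $Y$, condition (a) becomes: $v^{-1}q \le u^{-1}$ for every $q \le w$. By the inversion-invariance of the Bruhat order, this is precisely (b).

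For (b)$\iff$(c), Proposition \ref{toom} (with its $v$ and $w$ replaced by $w^{-1}$ and $v$ respectively) gives
\[
w^{-1}*v = \max\{xv : x \le w^{-1}\}.
\]
Substituting $q := x^{-1}$ and using that $x \le w^{-1}$ iff $q \le w$ yields $w^{-1}*v = \max\{q^{-1}v : q \le w\}$. Hence $w^{-1}*v \le u$ iff $q^{-1}v \le u$ for every $q \le w$, which is exactly (b).

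The main obstacle will be the fixed-point characterization underlying (a)$\iff$(b): converting the geometric inclusion of a $T$-stable closed subvariety into a Schubert variety into a combinatorial condition on the Weyl-group-indexed $T$-fixed points. Once that reduction is in place via Bialynicki--Birula, the remainder of the proof, including the purely combinatorial equivalence (b)$\iff$(c), is formal.
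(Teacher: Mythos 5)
Your proof is correct and logically complete, but it differs from the paper's in the way the hard geometric implication is established. The paper proves the cycle $(a)\Rightarrow(b)\Rightarrow(c)\Rightarrow(a)$, and the step $(c)\Rightarrow(a)$ is handled by an induction on $\ell(v)$ using Tits' conditions on the Bruhat order (specifically that $s_jC_{\hat u^{-1}}\subseteq C_{\hat u^{-1}}\cup C_{s_j\hat u^{-1}}$, quoting \cite{Springer}*{Lemma 8.3.7}) to slide a simple reflection through the Schubert variety one step at a time. You instead prove the two equivalences $(a)\Leftrightarrow(b)$ and $(b)\Leftrightarrow(c)$ separately: for $(b)\Rightarrow(a)$ you invoke the Bialynicki--Birula decomposition of $G/B$ with respect to a dominant regular one-parameter subgroup to reduce containment of the closed $T$-stable set $v^{-1}X_w$ in $X_{u^{-1}}$ to containment of the finite fixed-point sets, which is more conceptual and immediately reusable (``containment in a Schubert variety, for closed $T$-stable subsets, is detected on $T$-fixed points''). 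The paper's inductive argument is more elementary in the sense of avoiding BB theory, but is tailored to this one statement; your version makes the geometric content transparent. For $(b)\Leftrightarrow(c)$ you exploit the full strength of Proposition \ref{toom} --- $w^{-1}*v$ is the \emph{maximum} of $\{q^{-1}v:q\le w\}$ --- to obtain both directions at once, whereas the paper only extracts the existence of some $q\le w$ with $w^{-1}*v=q^{-1}v$ (giving $(b)\Rightarrow(c)$) and closes the loop by going around through $(a)$. Both proofs are valid; yours is arguably more modular.
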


\begin{proof}
(a) $\implies$ (b) follows from examining the $T$-fixed points of $X_w$, and the fact that taking inverses preserves the Bruhat order. ~\\

(b) $\implies$ (c) 
By Proposition \ref{toom}, $w^{-1}*v = q^{-1}v$ for some $q\le w$. Therefore $w^{-1}*v \le u$. ~\\

(c) $\implies$ (a) 
By induction on $\ell(v)$, we will show that $v^{-1}X_w\subseteq X_{u^{-1}}$ for $u = w^{-1}*v$, the base case $v = e$ (length $0$) being obvious. This clearly will imply the original statement.  

Assuming the implication holds for some $v$, we will show it for $v s_j$ such that $v\to vs_j$. Note that $ w^{-1}*(vs_j) = w^{-1}*v*s_j $ by definition. Set $u:= w^{-1}*v*s_j$ and $\hat u:= w^{-1}*v$. So $\hat u = u$ if $\hat us_j\to \hat u$ and $\hat u\to u= \hat us_j$ otherwise. 

Either way, we claim that $s_jX_{\hat u^{-1}}\subseteq X_{u^{-1}}$. Indeed, in the first case we have $s_j C_{\hat u^{-1}}\subseteq C_{\hat u^{-1}}\cup C_{s_j \hat u^{-1}}\subseteq X_{\hat u^{-1}} = X_{u^{-1}}$ (where the first inclusion follows by the Tits conditions on Bruhat order, see e.g. \cite{Springer}*{Lemma 8.3.7}). Hence $s_j X_{\hat u^{-1}}\subseteq X_{u^{-1}}$ since $X_{u^{-1}}$ is closed. 

In the second case, $s_j C_{\hat u^{-1}} = C_{u^{-1}}\subseteq X_{u^{-1}}$ (once again, by \cite{Springer}*{Lemma 8.3.7}) so $s_j X_{\hat u^{-1}}\subseteq X_{u^{-1}}$ again. 

The induction hypothesis gives that $v^{-1}X_w\subseteq X_{\hat u^{-1}}$, so 
$$
s_jv^{-1} X_w \subseteq s_j X_{\hat u^{-1}} \subseteq X_{u^{-1}}
$$
as desired. 
\end{proof}

\begin{remark}
Note that for each $v\in W$, there is a $w$-trapping pair $(u,v)$ with $u:=w^{-1}*v$. 

Since $u\ge w^{-1}*v\iff u^{-1} \ge v^{-1}*w$, we also get an equivalence
$$
v^{-1}X_w \subseteq X_{u^{-1}} \iff w^{-1}X_v \subseteq X_u.
$$
\end{remark}

In the following proposition, and throughout the remainder of this paper, we will use the following property relating the Bruhat order to inequalities on pairings. Specifically, whenever $v\le u$, we get a reverse relationship $u^{-1}\lambda \preceq v^{-1}\lambda$ for any dominant weight $\lambda$. Pairing with any fundamental coweight produces inequalities:

\begin{lemma}\label{bound}
Suppose $x$ is a dominant coweight, and that $u,v\in W$ satisfy $v \le u$. Then for any dominant weight $\lambda$, 
$$
\langle \lambda,ux\rangle \le \langle \lambda,vx\rangle.
$$
\end{lemma}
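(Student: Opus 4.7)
The plan is to reduce the statement to a well-known Bruhat-order comparison using the maximality of $P=P_i$. The first step exploits the fact that $W_P = \langle s_j : j \neq i\rangle$ fixes the fundamental coweight $x_i$ pointwise. Indeed, for $j \neq i$, one has $s_j(x_i) = x_i - \langle \alpha_j, x_i\rangle \alpha_j^\vee = x_i$. Consequently $ux_i = \bar u\, x_i$ and $vx_i = \bar v\, x_i$, so the two sides of the inequality depend only on the cosets $\bar u$ and $\bar v$. This lets me replace $u,v$ by $\bar u, \bar v$ and assume outright $v \le u$ in the Bruhat order.

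Next, I would use $W$-invariance of the pairing $\langle\, ,\rangle$ to rewrite
\[
\langle \lambda, vx_i\rangle - \langle \lambda, ux_i\rangle
= \langle v^{-1}\lambda - u^{-1}\lambda,\, x_i\rangle.
\]
Since Bruhat order is preserved by inversion, $v \le u$ gives $v^{-1} \le u^{-1}$. I then invoke the standard fact (see e.g.\ \cite{KumarBook}*{Lemma 8.3.5} or an easy induction on $\ell(u^{-1}) - \ell(v^{-1})$ using the exchange condition) that for a dominant weight $\lambda$ and $x \le y$ in $W$, the difference $x\lambda - y\lambda$ lies in $Q_{\ge 0}$, the $\mathbb{Z}_{\ge 0}$-span of $\Phi^+$. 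Applied here, this yields
\[
v^{-1}\lambda - u^{-1}\lambda \;=\; \sum_{\beta \in \Phi^+} n_\beta\, \beta, \qquad n_\beta \in \mathbb{Z}_{\ge 0}.
\]

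To finish, I pair with $x_i$. Writing each positive root $\beta = \sum_j c_j^\beta \alpha_j$ with $c_j^\beta \ge 0$, the defining property $\langle \alpha_j, x_i\rangle = \delta_{ij}$ gives $\langle \beta, x_i\rangle = c_i^\beta \ge 0$. Therefore $\langle v^{-1}\lambda - u^{-1}\lambda, x_i\rangle \ge 0$, which is exactly the claimed inequality. There is no real obstacle: the content is the standard Bruhat-to-root-cone comparison, and the only subtlety — that we can pass from $(u,v)$ to $(\bar u,\bar v)$ — is handled by the maximality of $P_i$, which is precisely why $x_i$ is singled out.
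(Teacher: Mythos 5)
Your proof is correct and takes essentially the same route as the paper: the paper states the lemma without a formal proof, but the preceding sentence explains the argument as $v \le u \Rightarrow u^{-1}\lambda \preceq v^{-1}\lambda$ (in dominance order) followed by pairing with a fundamental coweight, which is exactly your chain of reasoning. Your reduction from $(u,v)$ to $(\bar u, \bar v)$ via $W_{P_i}$-fixity of $x_i$ and the covering-relation induction giving $v^{-1}\lambda - u^{-1}\lambda \in \mathbb{Z}_{\ge 0}\Phi^+$ correctly supply the details the paper leaves to the reader.
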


Combining Proposition \ref{eqeq} and Lemma \ref{bound}, we get necessary inequalities on $P_\lambda^w$. 

\begin{proposition}\label{inside}
Suppose $\mu\in P_\lambda^w$. Then for every maximal parabolic $P = P_i$ and $w$-trapping pair $(u,v)$, the inequality 
$$
\langle \lambda,u x_i\rangle\le \langle \mu,v x_i\rangle
$$
holds. 
\end{proposition}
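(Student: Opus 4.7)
The plan is to combine the vertex description of $P_\lambda^w$ from Theorem \ref{convexequal}, the reformulation of $w$-trapping pairs via Proposition \ref{eqeq}, and the Bruhat-order-to-pairing inequalities of Lemma \ref{bound}. The idea is completely straightforward: check the inequality on each vertex of $P_\lambda^w$ and then extend linearly to convex combinations.

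First I would write, using Theorem \ref{convexequal}, any $\mu \in P_\lambda^w$ as a convex combination $\mu = \sum_{q \le w} a_q\, q\lambda$ with $a_q \ge 0$ and $\sum a_q = 1$. Then I would expand
\[
\langle \mu, v x_i\rangle = \sum_{q \le w} a_q \langle q\lambda, v x_i\rangle = \sum_{q \le w} a_q \langle \lambda, q^{-1}v\, x_i\rangle.
\]
So it suffices to establish $\langle \lambda, u x_i\rangle \le \langle \lambda, q^{-1}v\, x_i\rangle$ for each $q \le w$.

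Next I would invoke the $w$-trapping hypothesis. By Proposition \ref{eqeq}(a)$\Rightarrow$(b), the assumption $v^{-1}X_w \subseteq X_{u^{-1}}$ is equivalent to $q^{-1}v \le u$ in the Bruhat order for every $q \le w$. Passing to cosets in $W/W_{P_i}$, which only weakens the inequality, we obtain $\overline{q^{-1}v} \le \bar u$. (Here I am using the standard fact that the projection $W \to W^{P_i}$ is order-preserving with respect to Bruhat order, which can be cited or recalled briefly.) Lemma \ref{bound} then yields exactly $\langle \lambda, u x_i\rangle \le \langle \lambda, q^{-1}v\, x_i\rangle$.

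Finally I would average: multiplying by $a_q \ge 0$ and summing over $q \le w$,
\[
\langle \lambda, u x_i\rangle = \sum_{q \le w} a_q \langle \lambda, u x_i\rangle \le \sum_{q \le w} a_q \langle \lambda, q^{-1}v\, x_i\rangle = \langle \mu, v x_i\rangle,
\]
which is the desired inequality. There is no real obstacle here; the only point requiring a moment's care is the passage from $q^{-1}v \le u$ in $W$ to $\overline{q^{-1}v} \le \bar u$ in $W^{P_i}$, so that Lemma \ref{bound} (whose hypothesis is stated in terms of $W^P$) actually applies.
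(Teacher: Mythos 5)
Your proposal is correct and follows essentially the same path as the paper: reduce to the vertices $q\lambda$ with $q\le w$ via Theorem \ref{convexequal}, use the $w$-trapping characterization $q^{-1}v \le u$ (Proposition \ref{eqeq}), and apply Lemma \ref{bound}. You spell out the convex-combination averaging and the passage to minimal coset representatives more explicitly than the paper's terse version, but these are the same steps.
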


\begin{proof}
By Theorem \ref{convexequal}, it suffices to verify the statement for each vertex $\mu = q\lambda$, where $q\le w$. Since $(u,v)$ is $w$-trapping, $q^{-1}v \le u$, so 
$$
\langle \lambda,u x_i\rangle \le \langle q \lambda, v x_i\rangle
$$
by Lemma \ref{bound}. 
\end{proof}

\subsection{Sufficient conditions}

Combining  Proposition \ref{inside}, Corollary \ref{outside}, and Proposition \ref{eqeq}, we have the following result defining the polytope $P_\lambda^w$ by linear inequalities.

\begin{theorem}\label{necsufineqs}
Let $\lambda$ be a dominant weight and $w\in W$. Then $\mu\in P_\lambda^w$ if and only if, for every maximal parabolic $P=P_i$ and $v\in W^P$, the inequality
\setcounter{theorem}{\value{theorem}+1}
\begin{align}\label{face-inequality}
\langle \lambda,ux_i\rangle\le \langle \mu,vx_i\rangle
\end{align}
holds, where 
$u = w^{-1}*v$. 
\end{theorem}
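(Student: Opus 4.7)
The plan is to assemble this result from the structural pieces already in place: Proposition~\ref{inside} supplies necessity of the trapping-pair inequalities, Corollary~\ref{outside} supplies a violated inequality whenever $\mu \notin P_\lambda^w$, and Proposition~\ref{eqeq} translates $w$-trapping pairs $(u,v)$ into the combinatorial condition $w^{-1}*v \le u$. The extra ingredient needed is a compatibility between the Demazure product and the coset projection $W \to W^P$, namely that $\overline{w^{-1}*v}$ depends only on the coset $vW_P$.

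For the necessity direction ($\Rightarrow$), fix a maximal parabolic $P=P_i$ and $v \in W^P$. By Proposition~\ref{eqeq}(c), the pair $(w^{-1}*v,\, v)$ is automatically $w$-trapping, so Proposition~\ref{inside} yields
\[
\langle \lambda, (w^{-1}*v)\, x_i \rangle \le \langle \mu, v x_i \rangle.
\]
Since $W_P$ fixes $x_i$ and $\overline{w^{-1}*v}$ differs from $w^{-1}*v$ by a right factor in $W_P$, the left-hand side equals $\langle \lambda, u x_i \rangle$ for $u = \overline{w^{-1}*v}$. This is precisely inequality~(\ref{face-inequality}).

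For the sufficiency direction ($\Leftarrow$), I argue contrapositively. Assume $\mu \notin P_\lambda^w$; Corollary~\ref{outside} provides a fundamental coweight $x_i$ and elements $u_0,v_0 \in W$ (a priori in no particular cosets) such that $v_0^{-1}X_w \subseteq X_{u_0^{-1}}$ and $\langle \mu, v_0 x_i \rangle < \langle \lambda, u_0 x_i \rangle$. Set $v := \overline{v_0} \in W^P$ and $u := \overline{w^{-1}*v}$; the goal is to show inequality~(\ref{face-inequality}) fails for this $(u,v)$. The $W_P$-invariance of $x_i$ immediately gives $\langle \mu, v x_i \rangle = \langle \mu, v_0 x_i \rangle$ and $\langle \lambda, u_0 x_i \rangle = \langle \lambda, \overline{u_0}\, x_i \rangle$, so it suffices to establish $\overline{u_0} \ge u$, which by Lemma~\ref{bound} gives $\langle \lambda, u_0 x_i \rangle \le \langle \lambda, u x_i \rangle$ and thus the violation of~(\ref{face-inequality}).

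The technical heart of the sufficiency direction is the claim $\overline{w^{-1}*v_0} = \overline{w^{-1}*v}$. Writing $v_0 = v\sigma$ with $\sigma \in W_P$ and $\ell(v_0) = \ell(v) + \ell(\sigma)$, one has $v*\sigma = v\sigma = v_0$, so by associativity of the Demazure product $w^{-1}*v_0 = (w^{-1}*v)*\sigma$. Expanding $\sigma$ as a reduced product of simple reflections indexed by $\Delta \setminus \{\alpha_i\}$, each step in the Demazure computation $(w^{-1}*v)*\sigma$ either fixes its argument or right-multiplies it by a simple reflection in $W_P$; hence $w^{-1}*v_0$ lies in the coset $(w^{-1}*v)W_P$, confirming the claim. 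Combining this with the fact that the projection $W \to W^P$ is order-preserving, the inequality $w^{-1}*v_0 \le u_0$ from Proposition~\ref{eqeq}(c) descends to $u = \overline{w^{-1}*v_0} \le \overline{u_0}$, completing the argument. The main (mild) obstacle is this coset-independence observation; the rest is a careful chaining of Propositions~\ref{inside}, \ref{eqeq}, Corollary~\ref{outside}, and Lemma~\ref{bound}.
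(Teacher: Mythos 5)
Your proof is correct and follows the same overall skeleton as the paper's (necessity from Proposition~\ref{inside}, sufficiency by contraposition via Corollary~\ref{outside}, Proposition~\ref{eqeq}, and Lemma~\ref{bound}), but the one nontrivial step in the sufficiency direction is handled by a genuinely different, and in fact stronger, observation. The paper replaces $v_0$ by $\overline{v_0}$ using the one-sided estimate $w^{-1}*\overline{v_0}\le w^{-1}*v_0$, which it extracts from the subword characterization of Corollary~\ref{biproduct}; you instead prove the coset-independence $\overline{w^{-1}*v_0}=\overline{w^{-1}*\overline{v_0}}$ directly, via associativity of $*$ and the length-additive decomposition $v_0=\overline{v_0}\sigma$, and then descend $w^{-1}*v_0\le u_0$ through the order-preserving projection $W\to W^P$. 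Both routes suffice, but yours makes explicit the useful fact (implicit in the paper) that the element $u=\overline{w^{-1}*v}$ appearing in the inequality depends only on the coset $vW_P$. One further small point in your favor: in the necessity direction you apply Proposition~\ref{inside} to the $w$-trapping pair $(w^{-1}*v,\,v)$ and then invoke $W_P$-invariance of $x_i$ to rewrite the pairing, which is the careful version; the paper's parenthetical that $(\overline{w^{-1}*v},v)$ is itself $w$-trapping is not literally correct whenever $w^{-1}*v\notin W^P$ (by Proposition~\ref{eqeq}(c) it would require $w^{-1}*v\le\overline{w^{-1}*v}$, forcing equality), though of course the intended inequality is still valid for exactly the reason you give.
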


\begin{proof}
By Proposition \ref{inside}, the inequalities $\langle \lambda,ux_i\rangle \le \langle \mu,vx_i\rangle$ with 
$u = w^{-1}*v$ are necessary (indeed, $(u,v)$ is $w$-trapping). 

To show they are sufficient, consider $\mu\not \in P_\lambda^w$. Then by Corollary \ref{outside}, one can find $x_i, u, v$ such that $\langle \mu,vx_i\rangle < \langle \lambda,u x_i\rangle$; moreover these satisfy $u \ge w^{-1}*v$ by Proposition \ref{eqeq}. Therefore by Lemma \ref{bound}, $\langle \lambda,u x_i\rangle \le \langle \lambda,w^{-1}*v x_i\rangle$, so $\langle \mu,vx_i\rangle < \langle \lambda,w^{-1}*v x_i\rangle$. We can exchange $v$ for $\overline{v}$, the min-length representative in $vW_{P_i}$, since $\langle \mu,vx_i\rangle = \langle \mu,\bar v x_i\rangle$ and $w^{-1}*v \ge w^{-1}*\bar v$, by the subword characterization in Corollary \ref{biproduct}. 
\end{proof}

\subsection{Description of $P_{\lambda}^w$ as pseudo-Weyl polytopes}\label{pseudo}
We briefly give descriptions of Demazure polytopes $P_{\lambda}^w$ as pseudo-Weyl polytopes, following suggestions from Kamnitzer.

Recall from \cite{Kamn} that pseudo-Weyl polytopes admit three descriptions. They can be described as the convex hull $P(M(\mu_{\bullet}))=\op{conv}(\mu_{\bullet})$ where $\mu_{\bullet}=\{\mu_w| w \in W, \mu_{w} \leq_{w} \mu_u \forall w, u \in W\}.$
They can also be described as an intersection of cones: $P(M(\mu_{\bullet}))= \bigcap_{w \in W} C_w^{\mu_w}$, where we define 
$$
C_w^\mu = \{\nu\in X^*_\Q: \langle \nu, w x_i \rangle \ge \langle \mu, w x_i\rangle\text{ for all $i$}\}.
$$
Lastly they can be described as an intersection of half-spaces. Let $\Gamma=\{w x_i| w \in W, i \in 1, \dots rk(G)\}$, and let $M_{\bullet}$ be a collection of integers parametrized by $\Gamma$. Then $P(M_{\bullet})= \{\nu \in \mathfrak{h}^* | \langle \nu, \gamma \rangle \geq M_{\gamma} \forall \gamma \in \Gamma\}.$

This last description is easiest in light of what has already been shown: by Theorem \ref{necsufineqs} we have that $P_{\lambda}^w= P(M_{\bullet})$ where $M_{vx_i}= \langle \lambda, (w^{-1} *v) x_i \rangle$. 

In order to provide the ``vertex" or ``cone" descriptions, we introduce a map \[g: W \rightarrow \{q \in W | q \leq w\}\] \[v \mapsto v(v^{-1}*w).\]

\begin{lemma}
The map $g$ is well-defined, and if $v\le w$, then $g(vw_0) = v$. 
\end{lemma}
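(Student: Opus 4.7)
The plan is to verify the two assertions in turn, both via direct manipulation of the Demazure product using tools already established earlier in the section.

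First, for well-definedness, the goal is to show $v(v^{-1}*w) \leq w$ for every $v \in W$. I will invoke a symmetric form of Proposition \ref{toom}: the same inductive argument, using the right-multiplication rule $y*s_j = \max\{y, ys_j\}$ in place of the left-multiplication rule (which the paper has already noted is interchangeable), yields the symmetric identity
\[
y*z = \max\{yz' : z' \leq z\}.
\]
Applying this with $y = v^{-1}$ and $z = w$ produces $v^{-1}*w = v^{-1}y_0$ for some $y_0 \leq w$. Therefore $g(v) = v \cdot v^{-1} y_0 = y_0 \leq w$, completing the well-definedness check.

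Next, for the identity $g(vw_0) = v$ when $v \leq w$: I first compute $(vw_0)^{-1} = w_0 v^{-1}$ using $w_0^2 = e$, so that $g(vw_0) = (vw_0)(w_0 v^{-1} * w)$. The claim reduces to showing $w_0 v^{-1} * w = w_0$. By Corollary \ref{biproduct}, this Demazure product equals $\max\{xq : x \leq w_0 v^{-1},\ q \leq w\}$. Taking $x = w_0 v^{-1}$ and $q = v$ (which is admissible since $v \leq w$ by hypothesis) yields $xq = w_0 v^{-1} \cdot v = w_0$. As $w_0$ is the unique maximal element of $W$ in Bruhat order, the maximum must be exactly $w_0$. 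Hence $g(vw_0) = (vw_0) \cdot w_0 = v$, as claimed.

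I do not anticipate a serious obstacle here. The only step requiring justification slightly beyond what is explicitly stated in the section is the right-hand version of Proposition \ref{toom}, but this follows by the identical induction now peeling off a simple reflection from the right of the second argument rather than the left of the first, exploiting the symmetric definition of the Demazure product already recorded. Once that symmetric form is in hand, both parts of the lemma reduce to one-line manipulations.
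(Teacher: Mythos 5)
Your proof is correct and follows essentially the same route as the paper: well-definedness via the identity $v^{-1}*w = v^{-1}q_0$ for some $q_0\le w$ (the paper cites Proposition~\ref{toom} and leaves the reindexing/inversion implicit, while you spell out the symmetric form, which is a helpful clarification), and the identity $g(vw_0)=v$ by exhibiting $w_0$ in the set whose maximum is the Demazure product. The one minor difference is that for the second part the paper applies Proposition~\ref{toom} directly at $x=ww_0$, whereas you use the slightly weaker Corollary~\ref{biproduct} with $x=w_0v^{-1}$, $q=v$; both are fine and yield the same conclusion.
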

\begin{proof}
The map is well-defined due to Proposition \ref{toom}. Now suppose $v\le w$. Thus $vw_0 \ge w w_0$. From Proposition \ref{toom}, $(vw_0)^{-1}*w  = \max\{x^{-1}w | x \le vw_0\}$. This maximum is attained at $x = ww_0$, for there $x^{-1}w = w_0$. So 
\begin{align*}
g(vw_0) = vw_0((vw_0)^{-1}*w) = vw_0(w_0) = v. & \qedhere
\end{align*}
\end{proof}
This provides the ``vertex" description of the pseudo-Weyl polytope; namely, $$P_\lambda^w=\op{conv}\{g(v)\}_{v \in W}=\op{conv}\{v\lambda: v \leq w\}.$$

Finally, we formulate the ``cone" description of $P_\lambda^w$. 
\begin{proposition}
The polytope $P_\lambda^w$ can be described as 
$$
P_\lambda^w = \bigcap_{v \in W} C_v^{\mu_v},
$$
where $\mu_v = g(v)\lambda$. 
Moreover, for arbitrary $v_1,v_2$, 
$$
v_1^{-1}\mu_{v_2}\ge v_1^{-1} \mu_{v_1}.
$$
\end{proposition}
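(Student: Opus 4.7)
The plan is to reduce both assertions to the inequality description of Theorem \ref{necsufineqs}. For the cone description, I would first unpack: a point $\nu$ lies in $C_v^{\mu_v}$ precisely when $\langle \nu, vx_i\rangle \ge \langle g(v)\lambda, vx_i\rangle = \langle \lambda, g(v)^{-1}vx_i\rangle$ for every $i$. Direct computation gives $g(v)^{-1}v = (v(v^{-1}*w))^{-1}v = (v^{-1}*w)^{-1}$, and the subword characterization of the Demazure product (Corollary \ref{biproduct}) together with inverse-preservation of Bruhat order yields $(v^{-1}*w)^{-1} = w^{-1}*v$. Since $W_{P_i}$ fixes $x_i$ (because $\langle \alpha_j, x_i\rangle = \delta_{ij}$), both sides of the resulting inequality depend only on the classes of $v$ and $w^{-1}*v$ in $W^{P_i}$; so asking that $\nu \in C_v^{\mu_v}$ for all $v$ amounts to asking, for every $i$ and $v \in W$, that
\[
\langle \nu, \bar v x_i\rangle \ge \langle \lambda, \overline{w^{-1}*v}\, x_i\rangle.
\]

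To show $P_\lambda^w \subseteq \bigcap_v C_v^{\mu_v}$, I would take $\nu \in P_\lambda^w$ and any $v \in W$, then apply Theorem \ref{necsufineqs} to $\bar v \in W^{P_i}$ to get $\langle \nu, \bar v x_i\rangle \ge \langle \lambda, \overline{w^{-1}*\bar v}\, x_i\rangle$. Since $\bar v \le v$, Corollary \ref{biproduct} gives $w^{-1}*\bar v \le w^{-1}*v$, hence $\overline{w^{-1}*\bar v} \le \overline{w^{-1}*v}$ in Bruhat order on $W^{P_i}$, and Lemma \ref{bound} delivers $\langle \lambda, \overline{w^{-1}*v}\, x_i\rangle \le \langle \lambda, \overline{w^{-1}*\bar v}\, x_i\rangle$; transitivity finishes this direction. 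The reverse containment is immediate, as the cone-intersection inequality specialized to $v \in W^{P_i}$ is exactly the inequality of Theorem \ref{necsufineqs}.

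For the ``moreover'' claim, the plan is to exploit the cone description just proven. Since $g(v_2) \le w$, the vertex $\mu_{v_2}$ lies in $P_\lambda^w = \bigcap_v C_v^{\mu_v}$, and in particular in $C_{v_1}^{\mu_{v_1}}$. This gives $\langle \mu_{v_2} - \mu_{v_1}, v_1 x_i\rangle \ge 0$, i.e., $\langle v_1^{-1}(\mu_{v_2} - \mu_{v_1}), x_i\rangle \ge 0$, for every $i$, so $v_1^{-1}(\mu_{v_2} - \mu_{v_1})$ is dominant. On the other hand, $\mu_{v_1}$ and $\mu_{v_2}$ are both Weyl translates of $\lambda$, so their difference lies in the root lattice $Q$, and applying $v_1^{-1}$ keeps it there. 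Writing $v_1^{-1}(\mu_{v_2} - \mu_{v_1}) = \sum_j c_j \alpha_j$ and pairing with $x_i$ gives $c_i = \langle v_1^{-1}(\mu_{v_2} - \mu_{v_1}), x_i\rangle \ge 0$, so the element is a non-negative $\Q$-combination of simple roots, which is exactly the claimed inequality.

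The main bookkeeping obstacle will be the mismatch between indexing inequalities by $v \in W$ (for the cone description) and by $v \in W^{P_i}$ (in Theorem \ref{necsufineqs}). The key observation for navigating this is that passage to the minimal coset representative $\bar v$ is compatible with the Demazure product in the sense that $\overline{w^{-1}*\bar v} \le \overline{w^{-1}*v}$, which, combined with the monotonicity of pairings with $x_i$ recorded in Lemma \ref{bound}, gives just enough slack to close the loop.
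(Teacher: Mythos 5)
Your argument for the cone equality $P_\lambda^w = \bigcap_{v\in W} C_v^{\mu_v}$ is essentially the paper's route: both rewrite $\langle g(v)\lambda, vx_i\rangle$ as $\langle \lambda, (w^{-1}*v)x_i\rangle$ via the identity $(v^{-1}*w)^{-1} = w^{-1}*v$, and both then match the resulting system against Theorem \ref{necsufineqs}. You supply more of the index bookkeeping than the paper does---spelling out, via Lemma \ref{bound} and Corollary \ref{biproduct}, that the $W$-indexed and $W^{P_i}$-indexed systems cut out the same set---whereas the paper relies tacitly on that reduction having already been carried out inside the proof of Theorem \ref{necsufineqs}. Both are correct.

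For the ``moreover'' clause your route genuinely diverges. The paper's proof is a direct Bruhat-order computation: using Proposition \ref{toom} to write $v_i^{-1}*w = v_i^{-1}q_i$ with $q_i\le w$, it observes $v_1^{-1}q_2 \le v_1^{-1}q_1$ by maximality and transfers this to the dominance inequality on extremal weights. You instead feed the first half of the proposition back in: since $\mu_{v_2} = g(v_2)\lambda$ with $g(v_2)\le w$, it is a vertex of $P_\lambda^w$, hence lies in $C_{v_1}^{\mu_{v_1}}$, and unwinding the definition of $C_{v_1}^{\mu_{v_1}}$ together with the fact that $v_1^{-1}(\mu_{v_2}-\mu_{v_1})\in Q$ gives precisely the claimed dominance inequality. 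Your version is a bit more economical once the cone description is available, and it makes visible that the ``moreover'' clause is exactly the pseudo-Weyl compatibility $\mu_{v_1} \le_{v_1} \mu_{v_2}$ required for the cones to intersect coherently. The paper's computation is more self-contained in that it does not lean on the first assertion of the proposition. Either argument is valid; there is no gap in yours.
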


\begin{proof}
We have 
\begin{align*}
P_\lambda^w &= \{\mu | \langle \mu,vx_i\rangle \ge \langle \lambda, w^{-1}*v x_i\rangle, \forall v\in W, 1\le i\le r\}\\
&=\bigcap_{v\in W} \{\mu | \langle \mu,vx_i\rangle \ge \langle \lambda, w^{-1}*v x_i\rangle,1\le i\le r\}\\
&=\bigcap_{v\in W} \{\mu | \langle \mu,vx_i\rangle \ge \langle v(v^{-1}*w)\lambda, v x_i\rangle,1\le i\le r\}\\
&= \bigcap_{v\in W} C_v^{\mu_v}.
\end{align*}

Now let $v_1,v_2$ be arbitrary. By Proposition \ref{toom}, write $v_i^{-1}*w = v_i^{-1}q_i$ for suitable $q_i\le w$. Thus $\mu_{v_i} = g(v_i)\lambda = q_i\lambda$. Now $v_1^{-1}q_2$ belongs to the set $\{v_1^{-1}q | q\le w\}$, which has maximum $v_1^{-1}*w = v_1^{-1}q_1$. Therefore $v_1^{-1}q_2\le v_1^{-1}q_1$. This directly implies that $v_1^{-1}q_2\lambda \ge v_1^{-1}q_1 \lambda$.
\end{proof}

We again refer to \cite{Dykes} for additional work on the combinatorics of Demazure polytopes.

\section{Demazure structure on the faces} \label{faces}

Now that we know the inequalities determining $P_\lambda^w$, for an arbitrary inequality we can examine the locus of $P_\lambda^w$ where that inequality is sharp. Such a locus is called a face. 

\begin{definition}
Suppose that $P_i$ is a maximal parabolic and $v\in W^P$. The collection of points in $P_\lambda^w$ satisfying (\ref{face-inequality}) with equality is a face of $P_\lambda^w$, denoted 
$$
\mathcal{F}(v,P_i) = \{\mu\in P_\lambda^w: \langle \lambda,(w^{-1}*v) x_i\rangle = \langle \mu,vx_i\rangle\}.
$$
\end{definition}

In this section, we will determine the set of vertices on an arbitrary face $\mathcal{F}(v,P_i)$. In the process we uncover a robust, underlying structure: namely, the face is itself a Demazure polytope for a Levi subgroup.

For the remainder of the section, we fix a maximal parabolic $P=P_i$ and coset representative $v\in W^P$. The function $W\to W^P$, which assigns to each $w$ the minimal-length representative for $wW_P$, we will denote $w\mapsto \bar w$, the dependence upon $P$ being understood. 

For a dominant weight $\lambda$, let $W^\lambda$ denote the set of minimal-length coset representatives in $W/W_\lambda$, where $W_\lambda$ is the subgroup of $W$ stabilizing $\lambda$. 
Note that $V_\lambda^w = V_\lambda^{w'}$ whenever $wW_\lambda = w'W_\lambda$, so we can and generally will assume that $w = w_1 w_0^\lambda$, where $w_1\in W^\lambda$ and $w_0^\lambda$ is the longest element in $W_\lambda$ (so $w$ is a \emph{maximal}-length coset representative.) This is merely for convenience in stating Proposition \ref{threading} below. 

Suppose we wish to find exactly which vertices $x\lambda$, for $x\in W$, $x\le w$, lie on the face $\mathcal{F}(v,P_i)$. The requirement is that 
$$
 \langle \lambda,u x_i\rangle  = \langle x\lambda, vx_i \rangle,
$$
where $u = \overline{w^{-1}*v}$. 
By Corollary \ref{biproduct}, $x\le w$ implies $u\ge \overline{x^{-1}v}$.

\begin{lemma}
Fix a dominant weight $\lambda$, dominant coweight $\delta$, and $v\in W$. 
Suppose for some $u\ge v$ that $\langle \lambda,u\delta\rangle  = \langle \lambda,v\delta\rangle $.  Then 
$$
W_\lambda u W_\delta = W_\lambda v W_\delta.
$$
\end{lemma}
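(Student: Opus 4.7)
The plan is to induct along a saturated chain in the Bruhat order between $v$ and $u$, reducing to the case where $u$ covers $v$. First I would observe that for any dominant $\lambda$ and dominant coweight $\delta$, the function $x \mapsto \langle \lambda, x\delta\rangle$ is anti-monotonic on $(W,\le)$. Indeed, writing $\delta = \sum c_i x_i$ as a non-negative combination of fundamental coweights and applying Lemma \ref{bound} to each maximal parabolic $P_i$ summand, any $v' \le u'$ satisfies $\langle \lambda, u'\delta\rangle \le \langle \lambda, v'\delta\rangle$. Picking a saturated chain $v = v_0 \lessdot v_1 \lessdot \dots \lessdot v_k = u$, the sequence $\langle \lambda, v_j\delta\rangle$ is weakly decreasing, and since the endpoints agree by hypothesis, all intermediate terms agree. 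So it suffices to prove $v_{j+1} \in W_\lambda v_j W_\delta$ for each cover.

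Next I would handle the cover case directly. Write $u = s_\gamma v$ for some positive root $\gamma$ with $\ell(u) = \ell(v)+1$. A straightforward computation yields
$$
\langle \lambda, u\delta\rangle - \langle \lambda, v\delta\rangle \;=\; \langle s_\gamma \lambda - \lambda, v\delta\rangle \;=\; -\langle \lambda, \gamma^\vee\rangle \langle \gamma, v\delta\rangle,
$$
so the hypothesis forces one of the two factors to vanish. If $\langle \lambda, \gamma^\vee\rangle = 0$ then $s_\gamma$ fixes $\lambda$, hence $s_\gamma \in W_\lambda$ and $u = s_\gamma v \in W_\lambda v$. Otherwise $\langle \gamma, v\delta\rangle = \langle v^{-1}\gamma, \delta\rangle = 0$, so $s_{v^{-1}\gamma} \in W_\delta$, and $u = s_\gamma v = v \cdot (v^{-1}s_\gamma v) = v\, s_{v^{-1}\gamma} \in vW_\delta$. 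Either way, $u \in W_\lambda v W_\delta$.

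Composing these membership relations along the chain gives $u \in W_\lambda v W_\delta$, and by symmetry (or by taking the orbit), $W_\lambda u W_\delta = W_\lambda v W_\delta$, as required.

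The only subtle point, and really the ``obstacle,'' is the assertion that $\langle \lambda, \gamma^\vee\rangle = 0$ forces $s_\gamma \in W_\lambda$ (and similarly for $W_\delta$). This follows from the standard structural fact that, for dominant $\lambda$, $W_\lambda$ is precisely the reflection subgroup generated by all $s_\gamma$ with $\gamma \in \Phi$ and $\langle \lambda, \gamma^\vee\rangle = 0$ -- equivalently, it is the standard parabolic subgroup generated by the simple reflections $s_i$ with $\langle \lambda, \alpha_i^\vee\rangle = 0$. Once this is in hand, the rest of the argument is just the two-line reflection computation above.
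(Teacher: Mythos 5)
Your proof is correct and takes essentially the same route as the paper's: both reduce to a single Bruhat cover $u = s_\gamma v$ and exploit the identity $\langle \lambda, u\delta\rangle - \langle \lambda, v\delta\rangle = -\langle \lambda, \gamma^\vee\rangle\langle \gamma, v\delta\rangle$, concluding that one of the two factors must vanish and hence $s_\gamma\in W_\lambda$ or $v^{-1}s_\gamma v\in W_\delta$. Your final ``subtle point'' is in fact immediate: since the paper defines $W_\lambda$ as the stabilizer of $\lambda$ in $W$, the formula $s_\gamma\lambda=\lambda-\langle\lambda,\gamma^\vee\rangle\gamma$ directly gives $s_\gamma\in W_\lambda$ whenever $\langle\lambda,\gamma^\vee\rangle=0$, and likewise for $W_\delta$; no appeal to the parabolic structure of these stabilizers is needed.
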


\begin{proof}
We induct on $\ell(u)-\ell(v)$, the base case $0$ being trivial. 

Find positive roots $\gamma_i$ such that 
$$
v \to s_{\gamma_1} v  \to  s_{\gamma_2} s_{\gamma_1} v \to \cdots \to s_{\gamma_k} \cdots s_{\gamma_1} v = u.
$$
Set $\hat v = s_{\gamma_{k-1}}\cdots s_{\gamma_1} v = s_{\gamma_k}u$. Since $v\le \hat v\le u$, we certainly have $\langle \lambda,v \delta\rangle \ge \langle \lambda,\hat v\delta\rangle \ge \langle \lambda,u\delta\rangle$; therefore $\langle \lambda,\hat v\delta\rangle = \langle \lambda,u\delta\rangle$ and $\langle \lambda,v\delta\rangle = \langle \lambda,\hat v\delta\rangle$. 

Now $u\delta = \hat v\delta - \langle \hat v^{-1}\gamma_k,\delta\rangle \gamma_k^\vee$, and $\hat v^{-1}\gamma_k$ is a positive root. Two cases can occur: 
\begin{enumerate}
\item If $\langle \hat v^{-1}\gamma_k,\delta\rangle = 0$, set $\eta=\hat v^{-1}\gamma_k$. Then $s_\eta \in W_\delta$ and $u = s_{\gamma_k} \hat v = \hat v (\hat v^{-1} s_{\gamma_k} \hat v) = \hat vs_\eta$. 
\item If $\langle \hat v^{-1}\gamma_k,\delta\rangle\ne 0$, then it must be that $\langle \lambda,\gamma_k^\vee\rangle = 0$, so $s_{\gamma_k}\in W_\lambda$. 
\end{enumerate}
Either way, $W_\lambda u W_\delta = W_\lambda \hat v W_\delta$. 

By inductive hypothesis, $W_\lambda \hat v W_\delta = W_\lambda v W_\delta$, which concludes the proof. 
\end{proof}

This leads us to the following. 

\begin{proposition}\label{threading}
The vertices of $P_\lambda^w$ lying on the face $\mathcal{F}(v,P_i)$ form the set 
$$
\{x\lambda: x \in vW_{P_i}u^{-1}, x\le w \}.
$$
\end{proposition}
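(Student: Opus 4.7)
The plan is to establish the two set inclusions separately. For the reverse inclusion $\supseteq$, I would suppose $x = vbu^{-1}$ with $b\in W_{P_i}$ and $x\le w$. Then $x^{-1}v = ub^{-1}$; since $b^{-1}\in W_{P_i}$ stabilizes $x_i$, we get $\langle x\lambda, vx_i\rangle = \langle \lambda, x^{-1}vx_i\rangle = \langle \lambda, ux_i\rangle$, placing $x\lambda$ on $\mathcal{F}(v,P_i)$.

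For the forward inclusion, I would start with $x\le w$ such that $x\lambda\in \mathcal{F}(v,P_i)$. First, $x\le w$ yields $x^{-1}v \le x^{-1}*v \le w^{-1}*v$ (using Proposition \ref{toom}, or Corollary \ref{biproduct} directly), hence $\overline{x^{-1}v} \le u$ after passing to $W^{P_i}$. Lemma \ref{bound} then gives $\langle \lambda, ux_i\rangle \le \langle \lambda, \overline{x^{-1}v}\,x_i\rangle = \langle \lambda, x^{-1}vx_i\rangle$, so the condition $x\lambda\in\mathcal{F}(v,P_i)$ becomes the equality $\langle \lambda, \overline{x^{-1}v}\,x_i\rangle = \langle \lambda, ux_i\rangle$. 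I would then invoke the lemma immediately preceding the proposition, with $\delta = x_i$ (so that $W_\delta = W_{P_i}$), to conclude $W_\lambda u W_{P_i} = W_\lambda \overline{x^{-1}v} W_{P_i}$; thus $x^{-1}v = \alpha u\beta$ for some $\alpha\in W_\lambda$ and $\beta\in W_{P_i}$. Setting $x' := v\beta^{-1}u^{-1}$ produces an element of $vW_{P_i}u^{-1}$, and since $x = x'\alpha^{-1}$ with $\alpha^{-1}\in W_\lambda$ fixing $\lambda$, we have $x'\lambda = x\lambda$.

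The main obstacle I anticipate is the final step: verifying $x'\le w$. Here I would invoke the standing convention (explicitly stated just before the proposition) that $w = w_1 w_0^\lambda$ is the maximum-length representative of its $W_\lambda$-coset. Under this assumption, the set $\{q\in W : q\le w\}$ is closed under right multiplication by $W_\lambda$: any $q\le w$ written in parabolic form as $q = q_1 q_2$ with $q_1\in W^\lambda$, $q_2\in W_\lambda$ (length additive) satisfies $q_1\le w_1$ by the parabolic decomposition of the Bruhat order, and for any $y\in W_\lambda$ the length-additive product $q_1 y$ lies below $w_1 w_0^\lambda = w$. Applied to $x'\in xW_\lambda$ with $x\le w$, this gives $x'\le w$ and completes the proof.
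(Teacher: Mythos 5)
Your proof is correct and follows essentially the same route as the paper: the same application of the double-coset lemma preceding the proposition (with $\delta = x_i$, $W_\delta = W_{P_i}$), and the same reliance on $w$ being the longest element of its $W_\lambda$-coset to deduce $x'\le w$. You merely spell out two small steps the paper leaves implicit, namely the derivation of the equality $\langle\lambda,u x_i\rangle = \langle\lambda,\overline{x^{-1}v}\,x_i\rangle$ from the face condition and the verification that $\{q: q\le w\}$ is closed under right multiplication by $W_\lambda$.
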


\begin{proof}
Clearly if $x = vcu^{-1}$ for some $c\in W_{P_i}$, then 
$\langle x\lambda,vx_i\rangle = \langle \lambda,ux_i\rangle$. 

On the other hand, assuming $\langle x\lambda,vx_i\rangle = \langle \lambda,ux_i\rangle $ and $x\le w$, the lemma says there exist $b\in W_\lambda$ and $c\in W_{P_i}$ such that 
$$
bx^{-1}vc = u;
$$
so $xb^{-1} = vcu^{-1}$. Now, with the assumption that $w$ is a maximal-length coset representative, $x\le w \iff x x' \le w$ for any $x'\in W_\lambda$. Therefore the vertices $x\lambda$ and $(xb^{-1})\lambda$ coincide, and
$xb^{-1}\in vW_{P_i}u^{-1}, xb^{-1}\le w$. 
\end{proof}

Another way to say this is that $x\lambda$ lies on the face if and only if (up to changing coset representative for $xW_\lambda$) $xu\in v\cdot [e,w_0^P]\cap [e,w]\cdot u$, an intersection of translated Bruhat intervals.

Note that when $w = w_0$, any $v$ gives $w^{-1}*v = w_0$. So the set of $x$'s is just $vW_{P_i}w_0$. 
In general, we may find a $y\in W_{P_i}$ such that 
\setcounter{theorem}{\value{theorem}+1}
\begin{align}\label{what-is-y}
uy^{-1} = w^{-1}*v
\end{align} (and moreover $\ell(uy^{-1}) = \ell(u)+\ell(y^{-1})$).

\begin{proposition}\label{ival}
The set $W_{P_i}\cap v^{-1}[e,w]u$ is a Bruhat interval inside $W_{P_i}$, namely $[e,y]$. 
\end{proposition}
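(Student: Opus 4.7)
My plan is to verify both containments in $W_{P_i}\cap v^{-1}[e,w]u=[e,y]$, after observing that $z\in W_{P_i}\cap v^{-1}[e,w]u$ iff $z\in W_{P_i}$ and $vzu^{-1}\le w$. I would first establish a parabolic lemma: for $u\in W^{P_i}$ and $z_1,z_2\in W_{P_i}$, $z_1u^{-1}\le z_2u^{-1}$ in $W$ iff $z_1\le z_2$ in $W_{P_i}$. The reverse implication is clear by appending reduced expressions. For the forward implication, take a reduced subword of the concatenation of reduced expressions for $z_2$ and $u^{-1}$ realizing $z_1u^{-1}$, and factor the result as $z_1'u'^{-1}$ with $z_1'\in W_{P_i}$, $z_1'\le z_2$, and $u'\le u$. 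Then $u^{-1}u'=z_1^{-1}z_1'\in W_{P_i}$, so $u'=uc$ for some $c\in W_{P_i}$; the length-additivity $\ell(uc)=\ell(u)+\ell(c)$ (from $u\in W^{P_i}$) together with $\ell(u')\le\ell(u)$ forces $c=e$, whence $u'=u$ and $z_1'=z_1$.

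With the lemma in hand, $W_{P_i}\cap v^{-1}[e,w]u\subseteq[e,y]$ follows quickly: the identity $(w^{-1}*v)^{-1}=v^{-1}*w$ combined with Proposition \ref{toom} gives $v^{-1}*w=yu^{-1}=\max\{v^{-1}q:q\le w\}$, so if $vzu^{-1}\le w$, then $zu^{-1}=v^{-1}(vzu^{-1})\le yu^{-1}$, and the lemma yields $z\le y$.

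For the reverse containment $[e,y]\subseteq W_{P_i}\cap v^{-1}[e,w]u$, I induct on $\ell(y)-\ell(z)$. The base case $z=y$ is handled by the maximum above being attained: $v^{-1}*w=v^{-1}q_0$ for some $q_0\le w$, so $vyu^{-1}=q_0\le w$. For the inductive step, pick a Bruhat cover $z\lessdot z'\le y$ in $W_{P_i}$, write $z=z's_\alpha$ with $s_\alpha$ a reflection of $W_{P_i}$, and compute
\[vzu^{-1}=(vz'u^{-1})\cdot s_{u\alpha}.\]
Since $s_\alpha$ is a right descent of $z'$, $z'(\alpha)$ is a negative root of $W_{P_i}$, so $-z'(\alpha)$ is positive in the root subsystem of $W_{P_i}$. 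The standard characterization that $v\in W^{P_i}$ iff $v$ sends positive roots of $W_{P_i}$ into $\Phi^+$ then forces $vz'(\alpha)\in\Phi^-$, i.e., $s_{u\alpha}$ is a right descent of $vz'u^{-1}$. Hence $vzu^{-1}<vz'u^{-1}\le w$ by the inductive hypothesis.

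The main obstacle is the reverse containment: left multiplication by $v$ does not preserve Bruhat order in general, since lengths can collapse. The salvation is precisely the hypothesis $v\in W^{P_i}$, which forbids $v$ from creating inversions among positive roots of $W_{P_i}$; this is the exact combinatorial input needed so that each step of a saturated chain $z\lessdot z'\lessdot\cdots\lessdot y$ in $W_{P_i}$ translates into a descending chain $vzu^{-1}<vz'u^{-1}<\cdots<vyu^{-1}\le w$ in $W$.
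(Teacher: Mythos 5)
Your proof is correct and complete. Both containments are established soundly, and the key identities $(w^{-1}*v)^{-1}=v^{-1}*w$ and $v^{-1}*w = \max\{v^{-1}q : q\le w\}$ are applied correctly, exactly matching the definition of $y$ via $uy^{-1}=w^{-1}*v$.

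Your route diverges from the paper's in the direction $\Omega := W_{P_i}\cap v^{-1}[e,w]u\subseteq[e,y]$. The paper first appeals to its Lemma \ref{LeviBruhat} to realize the chain $uy'^{-1}\le uy^{-1}$ by reflections in $\Phi_{P}^{+}$, and then peels off $u$ step by step using Lemma \ref{b1}. You instead prove a self-contained parabolic lemma --- that for $u\in W^{P_i}$ and $z_1,z_2\in W_{P_i}$ one has $z_1u^{-1}\le z_2u^{-1}$ iff $z_1\le z_2$ --- via the subword property and the parabolic length formula; this is essentially the standard Deodhar-style lifting argument and avoids explicit chains of reflections entirely. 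For the other direction $[e,y]\subseteq\Omega$, your inductive step via right descents ($z'(\alpha)\prec 0$ forces $vz'(\alpha)\prec 0$ since $v\in W^{P_i}$, hence $s_{u\alpha}$ is a descent of $vz'u^{-1}$) is essentially a one-step instance of the paper's Lemma \ref{b1}, just phrased as an induction rather than applying the lemma along a chain. Your approach is somewhat cleaner in that it isolates the parabolic lifting lemma as a reusable statement; the paper's version recycles Lemmas \ref{b1} and \ref{LeviBruhat}, which it also needs elsewhere.
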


Before we come to the proof, we formulate two technical lemmas. 
First, if $u,v$ satisfy $\bar u = \bar v$, then they are related in the Bruhat order if and only if they are related by reflections from $\Phi_P$: 
\begin{lemma}\label{LeviBruhat}
Suppose $uW_{P_i} = vW_{P_i}$ and $u\le v$. Then there exists a sequence of roots $\eta_i\in \Phi_P^+$ such that 
$$
u = v s_{\eta_1}\cdots s_{\eta_m} < \hdots < v s_{\eta_1} < v.
$$
\end{lemma}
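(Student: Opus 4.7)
My plan is to reduce the statement to the Bruhat order \emph{inside} $W_{P_i}$ and then invoke a standard Coxeter-theoretic chain argument. Since $uW_{P_i}=vW_{P_i}$, they share a common minimal-length coset representative $\bar v=\bar u\in W^{P_i}$; I factor $v=\bar v v'$ and $u=\bar v u'$ with $v',u'\in W_{P_i}$. The well-known length identity $\ell(\bar v x)=\ell(\bar v)+\ell(x)$ for all $x\in W_{P_i}$ yields the classical fact that left-multiplication by $\bar v$ is a Bruhat-order isomorphism from $W_{P_i}$ onto the coset $\bar v W_{P_i}$; in particular, $u\le v$ in $W$ if and only if $u'\le v'$ in $W_{P_i}$.

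Working now inside $W_{P_i}$ (itself a Coxeter group whose reflections are precisely the $s_\eta$ for $\eta\in\Phi_P^+$), I would apply the chain property of Bruhat order: $u'\le v'$ admits a saturated chain $u'=w_0<w_1<\cdots<w_m=v'$ with $m=\ell(v')-\ell(u')$. Each covering relation $w_{j-1}<w_j$ is realized by a reflection, so writing $w_{j-1}=w_j s_{\eta_{m-j+1}}$ for a positive root $\eta_{m-j+1}\in\Phi_P^+$ and reindexing produces a chain
\[
u'=v' s_{\eta_1}\cdots s_{\eta_m}<v' s_{\eta_1}\cdots s_{\eta_{m-1}}<\cdots<v' s_{\eta_1}<v'
\]
inside $W_{P_i}$, each step dropping length by exactly one.

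Finally, I would left-multiply the whole chain by $\bar v$ to land back in $W$. Since $v' s_{\eta_1}\cdots s_{\eta_j}\in W_{P_i}$ for every $j$, the length identity $\ell(\bar v x)=\ell(\bar v)+\ell(x)$ applies at each step, so a unit drop of $W_{P_i}$-length translates to a unit drop of $W$-length. This yields the desired chain
\[
u=vs_{\eta_1}\cdots s_{\eta_m}<vs_{\eta_1}\cdots s_{\eta_{m-1}}<\cdots<vs_{\eta_1}<v
\]
with each $\eta_j\in\Phi_P^+$. The only point requiring vigilance, and hence the main (mild) obstacle, is this transfer of length drops across cosets; the length-additivity for the factorization $\bar v\cdot W_{P_i}$ is exactly what ensures that descents coming from $\Phi_P^+$-reflections inside $W_{P_i}$ remain genuine Bruhat-descents in the ambient group $W$, so no reflections from $\Phi\setminus\Phi_P$ ever need to be introduced.
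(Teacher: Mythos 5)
Your proposal is correct, but it routes the argument differently from the paper. The paper inducts on $\ell(v)-\ell(u)$ directly in $W$: at each step it uses the chain property to find a positive root $\beta$ with $u \le vs_\beta \lessdot v$, and then observes that because the projection $W \to W^P$ is order-preserving and $\bar u = \bar v$, one forces $\overline{vs_\beta} = \bar v$, hence $s_\beta \in W_{P_i}$ and $\beta \in \Phi_P^+$; recursing on $vs_\beta$ finishes. You instead factor $u = \bar v u'$, $v = \bar v v'$ and transfer the whole problem into $W_{P_i}$ via the poset isomorphism $W_{P_i} \xrightarrow{\ \sim\ } \bar v W_{P_i}$, then apply the chain property there. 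The approaches are essentially dual in spirit, and both yield chains whose reflections lie in $\Phi_P^+$ for the same underlying reason (intermediate elements cannot leave the coset $vW_{P_i}$).

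One caveat: you say the length identity $\ell(\bar v x) = \ell(\bar v) + \ell(x)$ ``yields'' the poset isomorphism $W_{P_i} \cong \bar v W_{P_i}$. That's a bit quick. The forward implication $u' \le v' \Rightarrow \bar v u' \le \bar v v'$ is immediate from the subword property plus length additivity, but the converse $\bar v u' \le \bar v v' \Rightarrow u' \le v'$ is not a formal consequence of length additivity alone---it needs the lifting (Z-) property of Bruhat order, applied inductively by stripping off a simple reflection $s$ with $v's < v'$. This is a genuine, if standard, fact about parabolic cosets; it is exactly the content the paper re-derives on the fly via order-preservation of $w \mapsto \bar w$. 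Your proof is valid so long as you either cite this isomorphism properly (it can be found in standard Coxeter-group references, and is provable from the lifting property) or spell out the lifting argument, rather than presenting it as an immediate consequence of length additivity. Once that is flagged, the rest---realizing covers by reflections, observing those reflections lie in $W_{P_i}$ hence come from $\Phi_P^+$, and transporting the chain back via left-multiplication---is correct and complete.
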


\begin{proof}
We induct on $\ell(v)-\ell(u)$, the case $u=v$ being the trivial base case. 

Supposing $u< v$, find any positive root $\beta$ such that $u \le v s_\beta < v$. Then 
$$
\bar u \le \overline{ v s_{\beta}} \le \bar v,
$$
so in particular $vs_{\beta} W_{P_i} = v W_{P_i}$. Hence $s_\beta W_{P_i} = W_{P_i}$, so $s_\beta\in W_{P_i}$, so $\beta\in \Phi_P^+$. The inductive hypothesis applies to the pair $u\le v s_{\beta}$, which gives the result. 
\end{proof}

Second, the Bruhat order on $W_{P_i}$ is preserved by suitable left- and right-multiplication by elements of $W^P$. 

\begin{lemma}\label{b1}
Suppose $u,v\in W^P$, $y\in W_{P_i}$. Let $\gamma\in \Phi_P^+$, and set $\beta:=v\gamma\in \Phi^+$. Then 
\begin{align*}
s_\gamma y < y &\iff s_{\beta} v y u^{-1}  < v y u^{-1}\\
&\iff s_{\beta} q  < q,
\end{align*}
where $q=v y u^{-1}$.
\end{lemma}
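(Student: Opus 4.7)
The plan is to reduce both inequalities in the Bruhat order to sign conditions on the image of a single root under $y^{-1}$, using the standard criterion that for $\alpha\in \Phi^+$ and any $w\in W$, one has $s_\alpha w < w$ if and only if $w^{-1}\alpha\in \Phi^-$. The second stated equivalence is purely notational ($q = vyu^{-1}$), so all the content is in the first.

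First I would verify that $\beta = v\gamma$ is genuinely positive, so that the criterion even applies on the right-hand side. This uses the characterization of $W^P$ as those Weyl group elements sending simple roots of $\Phi_P$ to positive roots of $\Phi$; by positive linear combination, any $v\in W^P$ sends all of $\Phi_P^+$ to $\Phi^+$, and hence $v(\Phi_P^-)\subseteq \Phi^-$ as well. So $\beta\in \Phi^+$.

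Next, I would apply the criterion on each side. The inequality $s_\gamma y < y$ translates to $y^{-1}\gamma \in \Phi^-$. The inequality $s_\beta v y u^{-1} < v y u^{-1}$ translates to $(vyu^{-1})^{-1}\beta \in \Phi^-$, and since $\beta = v\gamma$, this simplifies to $uy^{-1}\gamma \in \Phi^-$. Thus the lemma reduces to the claim
\[
y^{-1}\gamma \in \Phi^- \iff u y^{-1}\gamma \in \Phi^-.
\]

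Finally, I would observe that since $y\in W_{P_i}$ preserves $\Phi_P$ and $\gamma\in \Phi_P^+$, the root $y^{-1}\gamma$ lies in $\Phi_P$. Then applying the sign-preserving property of $u\in W^P$ noted above to this root in $\Phi_P$ gives the equivalence: $y^{-1}\gamma\in \Phi_P^-$ iff $uy^{-1}\gamma\in \Phi^-$, and similarly for the positive case. This finishes the proof. There is no real obstacle here; the only mild subtlety is keeping track of which roots live in $\Phi_P$ versus $\Phi$, so that the key property of $u,v\in W^P$ can be invoked on roots of $\Phi_P$.
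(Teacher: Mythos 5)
Your proof is correct and follows essentially the same route as the paper's: reduce both Bruhat inequalities to sign conditions via the criterion $s_\alpha w < w \iff w^{-1}\alpha \in \Phi^-$, observe that $(vyu^{-1})^{-1}\beta = uy^{-1}\gamma$, and conclude from the fact that $y^{-1}\gamma\in\Phi_P$ and $u\in W^P$ preserves signs of roots in $\Phi_P$. The paper's proof is more terse but uses exactly these ingredients.
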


\begin{proof}
We know that $s_\gamma y < y$ if and only if $y^{-1}\gamma \prec 0$, which is if and only if $u y^{-1} \gamma \prec 0$ since $y^{-1}\gamma\in \Phi_P$ and $u\in W^P$. But of course $u y^{-1}\gamma = u y^{-1} v^{-1} \beta$, so this last condition is equivalent to $s_\beta v y u^{-1} < v y u^{-1}$. 
\end{proof}

Note that the lemma applies in the special cases $u=e$ or $v=e$. 

\begin{proof}[Proof of Proposition \ref{ival}]
Set $\Omega = W_{P_i}\cap v^{-1}[e,w]u$. Recall $y= (v^{-1}*w)u$ (equation (\ref{what-is-y})). 

By Lemma \ref{toom}, $w^{-1}*v$ is equal to some $q^{-1}v$ where $q\le w$. Therefore $u = q^{-1}vy$ and 
$$
y = v^{-1}\cdot q \cdot u \in \Omega.
$$

Now let $y'\le y$, and find a sequence 
$$
y' = s_{\gamma_k} \cdots s_{\gamma_1} y < \hdots < s_{\gamma_1} y < y.
$$
There is a corresponding sequence $\beta_i:=v\gamma_i$, each a positive root since $v\in W^P$. By Lemma \ref{b1}, applied repeatedly, we obtain 
$$
q':= s_{\beta_k} \cdots s_{\beta_1} q < \hdots < s_{\beta_1} q < q
$$
(recall that $q = vy u^{-1}$). This implies that $q'\le w$. So we have 
\begin{align*}
y' = v^{-1}v y' &= v^{-1} v s_{\gamma_k}\cdots s_{\gamma_1} y =v^{-1} s_{\beta_k}\cdots s_{\beta_1} v y \\ &= v^{-1} s_{\beta_k}\cdots s_{\beta_1} qu = v^{-1}q'u \in \Omega.
\end{align*}

Next, start with any $q' \le w$ such that $y':=v^{-1} q' u \in \Omega$ (i.e., $y'\in W_{P_i}$). We will show that $y'\le y$. Indeed, $q'^{-1} v \le q^{-1} v$ by Lemma \ref{toom}, which means $u y'^{-1} \le u y^{-1}$. By Lemma \ref{LeviBruhat}, there exists a sequence of roots $\gamma_i\in \Phi_P^+$ such that 
$$
uy'^{-1} = uy^{-1} s_{\gamma_1} \cdots s_{\gamma_m} <\hdots <uy^{-1}s_{\gamma_1} <uy^{-1}. 
$$
Taking inverses, 
$$
y'u^{-1} < \hdots < s_{\gamma_1} y u^{-1} < y u^{-1}.
$$
Applying Lemma \ref{b1} (with $v=e$), 
$$
y' < \hdots < s_{\gamma_1} y < y,
$$
as desired. \end{proof}

As a corollary, we find that faces of $P_\lambda^w$ are Demazure polytopes associated to Levi subgroups; see Figure \ref{peel} for a depiction of a Demazure polytope, highlighting some of its faces. 
We use $P_{L,\mu}^y$ to denote a Demazure polytope for $L$, to distinguish it from the notation for $G$. 

\begin{corollary}\label{facetDemazurestructure1}
Let $\mathcal{F}=\mathcal{F}(v,P_i)$ be a face of $P_\lambda^w$. Let $y = (w^{-1}*v)^{-1}u$ as above. The map 
\begin{align*}
\mathcal{F} &\to P_{L,u^{-1}\lambda}^y \\
\mu &\mapsto v^{-1}\mu
\end{align*}
is well-defined and a linear isomorphism of convex polytopes. 
\end{corollary}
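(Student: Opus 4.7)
The plan is to unwind the definitions and simply identify the vertex sets on both sides, invoking the earlier technical results (Proposition \ref{threading}, Proposition \ref{ival}) and Theorem \ref{convexequal} applied to the Levi $L$. The map $\mu\mapsto v^{-1}\mu$ is obviously a linear isomorphism of $X^*(T)_\Q$, so once we know it carries the vertex set of $\mathcal{F}(v,P_i)$ onto the vertex set of $P_{L,u^{-1}\lambda}^y$, it automatically gives an isomorphism of the convex hulls.

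First I would verify that the Demazure polytope on the right is well-defined. Because $u = \overline{w^{-1}*v}\in W^{P_i}$, we have $u\alpha_j\succ 0$ for all simple roots $\alpha_j$ of the Levi $L$, so
\[
\langle u^{-1}\lambda,\alpha_j^\vee\rangle = \langle \lambda,u\alpha_j^\vee\rangle \ge 0
\]
for every such $j$, showing $u^{-1}\lambda$ is dominant for $L$. Moreover, the decomposition $w^{-1}*v = uy^{-1}$ from (\ref{what-is-y}) is length-additive with $u\in W^{P_i}$, so $y\in W_{P_i}$, and the Demazure polytope $P_{L,u^{-1}\lambda}^y$ is defined.

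Next I would identify the vertex sets. By Proposition \ref{threading}, the vertices of $\mathcal{F}(v,P_i)$ are exactly $\{x\lambda : x\in vW_{P_i}u^{-1},\ x\le w\}$. Parametrizing $x = vcu^{-1}$ with $c\in W_{P_i}$ and applying $v^{-1}$, the vertices of $v^{-1}\mathcal{F}(v,P_i)$ become
\[
\{cu^{-1}\lambda : c\in W_{P_i},\ vcu^{-1}\le w\}.
\]
By Proposition \ref{ival}, the condition $c\in W_{P_i}$ with $vcu^{-1}\le w$ is precisely $c\in [e,y]$, the Bruhat interval in $W_{P_i}$. On the other hand, applying Theorem \ref{convexequal} inside $L$ (with dominant weight $u^{-1}\lambda$ and Weyl element $y\in W_{P_i}$), the vertices of $P_{L,u^{-1}\lambda}^y$ are exactly $\{c(u^{-1}\lambda) : c\in W_{P_i},\ c\le y\}$. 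The two vertex sets thus coincide.

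Finally, I would conclude the proof: since $\mathcal{F}(v,P_i)$ is a face of the polytope $P_\lambda^w$, it is the convex hull of its vertices, and likewise for $P_{L,u^{-1}\lambda}^y$. The linear map $\mu\mapsto v^{-1}\mu$ sends the vertex set of $\mathcal{F}(v,P_i)$ bijectively onto the vertex set of $P_{L,u^{-1}\lambda}^y$ and is invertible on the ambient vector space, so it restricts to a linear isomorphism of the two convex polytopes. The only real point of substance is the vertex-matching, which has already been done in Proposition \ref{ival}; so the corollary is essentially an assembly of earlier results with no new obstruction.
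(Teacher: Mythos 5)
Your proof is correct and follows the route the paper intends: the corollary carries no independent proof in the paper precisely because it is meant to be an immediate consequence of Proposition \ref{threading}, Proposition \ref{ival}, and Theorem \ref{convexequal} applied to the Levi, which is exactly how you assemble it. The dominance check for $u^{-1}\lambda$, the identification $y\in W_{P_i}$ (which follows from $u = \overline{w^{-1}*v}$ being in the same $W_{P_i}$-coset as $w^{-1}*v$), and the vertex-matching via Proposition \ref{ival} are all handled correctly.
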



\begin{remark} \label{BIP}
Let $G=A_n$, and $[u, v] \subseteq W=S_{n+1}$ be an interval in the symmetric group. In \cite{TW}, Tsukerman and Williams studied the combinatorial aspects of the \emph{Bruhat interval polytope}
$$
Q_{u,v}:=\op{conv}\{z(\rho): z \in [u,v]\},
$$
where $\rho$ is the half sum of positive roots. In particular, using techniques from matroid theory, they derived inequalities describing the polytopes $Q_{u,v}$ and showed that the faces of Bruhat interval polytopes are again Bruhat interval polytopes. It would be interesting to know if these inequalities and face descriptions agree with those of Theorem \ref{necsufineqs} and Corollary \ref{facetDemazurestructure1} when considering the common polytopes $Q_{e,w}=P^w_{\rho}$ in this setting.

\end{remark}

\tdplotsetmaincoords{70}{115}
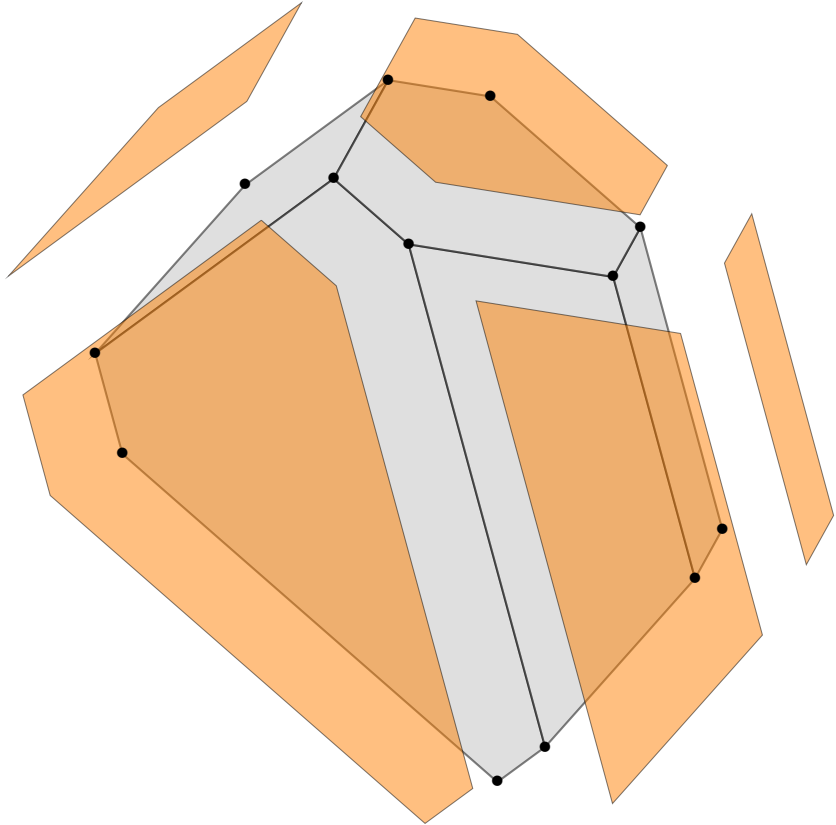
\begin{figure}
\begin{center}
\begin{tikzpicture}[tdplot_main_coords,scale=1.5]
   
\coordinate(p0) at (-3.0, 1.0, 0.707107);
\coordinate(p1) at (-3.0, -1.0, 0.707107);
\coordinate(p2) at (-2.5, 1.5, 1.414214);
\coordinate(p3) at (-2.5, -1.5, 1.414214);
\coordinate(p4) at (-1.5, 2.5, -1.414214);
\coordinate(p5) at (-1.5, 0.5, 2.828427);
\coordinate(p6) at (-1.5, -0.5, 2.828427);
\coordinate(p7) at (-1.0, 3.0, -0.707107);
\coordinate(p8) at (-0.5, 1.5, -2.828427);
\coordinate(p9) at (0.5, 1.5, -2.828427);
\coordinate(p10) at (1.0, 3.0, -0.707107);
\coordinate(p11) at (1.5, 2.5, -1.414214);
\coordinate(p12) at (1.5, 0.5, 2.828427);
\coordinate(p13) at (1.5, -0.5, 2.828427);
\coordinate(p14) at (2.5, 1.5, 1.414214);
\coordinate(p15) at (2.5, -1.5, 1.414214);
\coordinate(p16) at (3.0, 1.0, 0.707107);
\coordinate(p17) at (3.0, -1.0, 0.707107);
   
\draw[thick,fill=lightgray,opacity=0.5] (p3) -- (p6) -- (p13) -- (p15) -- (p3);
\draw[thick,fill=lightgray,opacity=0.5] (p3) -- (p1) -- (p8) -- (p9) -- (p17) -- (p15) -- (p3);
\draw[thick,fill=lightgray,opacity=0.5] (p0) -- (p1) -- (p8) -- (p4) -- (p0);
\draw[thick,fill=lightgray,opacity=0.5] (p0) -- (p2) -- (p7) -- (p4) -- (p0);
\draw[thick,fill=lightgray,opacity=0.5] (p2) -- (p0) -- (p1) -- (p3) -- (p6) -- (p5) -- (p2);

\draw[fill=orange,opacity=0.5] (-3.065685, 1.5, 1.814214) -- (-3.565685, 1.0, 1.107107) -- (-3.565685, -1.0, 1.107107) -- (-3.065685, -1.5, 1.814214) -- (-2.065685, -0.5, 3.228427) -- (-2.065685, 0.5, 3.228427) -- (-3.065685, 1.5, 1.814214) -- (-3.065685, 1.5, 1.814214);

\draw[fill=orange,opacity=0.5] (-3.742462, 1.742462, 0.707107) -- (-3.242462, 2.242462, 1.414214) -- (-1.742462, 3.742462, -0.707107) -- (-2.242462, 3.242462, -1.414214) -- (-3.742462, 1.742462, 0.707107);
   
\draw[fill=orange,opacity=0.5] (-2.5, -2.348528, 2.014214) -- (-1.5, -1.348528, 3.428427) -- (1.5, -1.348528, 3.428427) -- (2.5, -2.348528, 2.014214) -- (-2.5, -2.348528, 2.014214);

\draw[fill=orange,opacity=0.5] (-2.5, -2.207107, 0.914214) -- (-3.0, -1.707107, 0.207107) -- (-0.5, 0.792893, -3.328427) -- (0.5, 0.792893, -3.328427) -- (3.0, -1.707107, 0.207107) -- (2.5, -2.207107, 0.914214) -- (-2.5, -2.207107, 0.914214);

\draw[fill=orange,opacity=0.5] (-4.414214, 1.0, -0.292893) -- (-4.414214, -1.0, -0.292893) -- (-1.914214, 1.5, -3.828427) -- (-2.914214, 2.5, -2.414214) -- (-4.414214, 1.0, -0.292893);

\node at (p0) {\small $\bullet$};
\node at (p1) {\small $\bullet$};
\node at (p2) {\small $\bullet$};
\node at (p3) {\small $\bullet$};
\node at (p4) {\small $\bullet$};
\node at (p5) {\small $\bullet$};
\node at (p6) {\small $\bullet$};
\node at (p7) {\small $\bullet$};
\node at (p8) {\small $\bullet$};
\node at (p9) {\small $\bullet$};

\node at (p13) {\small $\bullet$};

\node at (p15) {\small $\bullet$};

\node at (p17) {\small $\bullet$};

\end{tikzpicture}
\caption{\label{peel} \small Faces of Demazure polytopes are Demazure polytopes: the original polytope has type $A_3$, so the facets have type $A_2$ or $A_1\times A_1$, which forces them to be hexagons, trapezoids, triangles, or rectangles.}
\end{center}
\end{figure}

If we wish to remove the ``twist'' (by $v^{-1}$) in the 
isomorphism of Corollary \ref{facetDemazurestructure1}, we certainly may, at the expense of losing the ``standardness'' of the Levi. In other words, 
$$
\mathcal{F} = P_{vLv^{-1},vu^{-1}\lambda}^{vyv^{-1}}.
$$
This Demazure polytope for $vLv^{-1}$ suggests that there may actually be a $vLv^{-1}$-Demazure \emph{module} sitting inside $V_\lambda^w$ (with weight polytope equal to $\mathcal{F}$). Indeed there is: set $B_L:=B\cap L$ and recall that $vB_Lv^{-1}\subseteq B$ (since $v\in W^P$), so $V_\lambda^w$ is canonically a $vB_Lv^{-1}$-module. 

\begin{proposition}\label{facetDemazurestructure2}
Every map in the following commutative diagram is a $vB_Lv^{-1}$-equivariant inclusion. Here $q = v(v^{-1}*w)$ and $uy^{-1} = w^{-1}*v$. 
\begin{center}
\begin{tikzcd}
V_{vLv^{-1},vu^{-1}\lambda}^{vyv^{-1}} \arrow[r] \arrow[d]& V_\lambda^q \arrow[r] & V_{\lambda}^w \arrow[d] \\
V_{vLv^{-1},vu^{-1}\lambda} \arrow[rr] & &  V_\lambda
\end{tikzcd}
\end{center}
\end{proposition}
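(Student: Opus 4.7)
The plan is to identify every object in the diagram with an explicit subspace of $V_\lambda$, so that every arrow becomes a literal set inclusion, $vB_Lv^{-1}$-equivariance is inherited from the ambient $G$-action, and commutativity of the diagram is automatic. The key vector throughout is the extremal weight vector $v_{q\lambda}\in V_\lambda(q\lambda)$, whose weight space is one-dimensional since $q\lambda$ lies in the Weyl orbit of $\lambda$.

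First, I would verify that $q\le w$. By Proposition \ref{toom} applied to $w^{-1}*v$, one can write $w^{-1}*v = q_0^{-1}v$ with $q_0\le w$; comparing with the hypothesis $w^{-1}*v = uy^{-1}$ (equivalently, $v^{-1}*w = yu^{-1}$, using that Demazure product respects inversion) forces $q_0 = vyu^{-1} = v(v^{-1}*w) = q$. Proposition \ref{DemProps}(1) then supplies the $B$-equivariant inclusion $V_\lambda^q\hookrightarrow V_\lambda^w$ (the right half of the top row) as well as the right vertical map $V_\lambda^w\hookrightarrow V_\lambda$.

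Next, I would realize the bottom inclusion $V_{vLv^{-1},vu^{-1}\lambda}\hookrightarrow V_\lambda$. Since $u\in W^P$, conjugation by $u$ sends positive roots of $L$ into $\Phi^+$; hence for any positive root $\beta$ of $L$,
\[
e_\beta\cdot v_{u^{-1}\lambda} \;=\; \dot u^{-1} e_{u\beta}\cdot v_\lambda \;=\; 0,
\]
so $v_{u^{-1}\lambda}$ is an $L$-highest-weight vector of $B_L$-dominant weight $u^{-1}\lambda$. Conjugating by $v$, the vector $v_{vu^{-1}\lambda}$ is a $vB_Lv^{-1}$-highest-weight vector, and the finite-dimensional cyclic $vLv^{-1}$-module $U(v\mathfrak{l}v^{-1})v_{vu^{-1}\lambda}\subseteq V_\lambda$ is irreducible, hence isomorphic to $V_{vLv^{-1},vu^{-1}\lambda}$.

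For the top-left inclusion, the Demazure submodule $V_{vLv^{-1},vu^{-1}\lambda}^{vyv^{-1}}$ is by definition the $U(v\mathfrak{b}_Lv^{-1})$-span of the extremal weight vector of weight $(vyv^{-1})(vu^{-1}\lambda) = vyu^{-1}\lambda = q\lambda$, namely $v_{q\lambda}$. Since $v\in W^P$ forces $v\mathfrak{b}_Lv^{-1}\subseteq\mathfrak{b}$, we obtain
\[
V_{vLv^{-1},vu^{-1}\lambda}^{vyv^{-1}}\;=\;U(v\mathfrak{b}_Lv^{-1})v_{q\lambda}\;\subseteq\;U(\mathfrak{b})v_{q\lambda}\;=\;V_\lambda^q.
\]
With the left vertical arrow being the tautological embedding of a Demazure submodule into its ambient irreducible, every map is now an inclusion of subspaces of $V_\lambda$, so commutativity and $vB_Lv^{-1}$-equivariance are automatic. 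The main obstacle is the bookkeeping: tracking how $u,v\in W^P$ simultaneously yields $v\mathfrak{b}_Lv^{-1}\subseteq\mathfrak{b}$ and the highest-weight property of $v_{vu^{-1}\lambda}$ for $vB_Lv^{-1}$, and how the length-additive relation $uy^{-1} = w^{-1}*v$ delivers $q\le w$.
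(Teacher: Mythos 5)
Your proposal is correct and takes essentially the same approach as the paper: identify every object as a subspace of $V_\lambda$, use $u\in W^P$ to show $v u^{-1}v_\lambda$ is a $vB_Lv^{-1}$-highest-weight vector generating a copy of $V_{vLv^{-1},vu^{-1}\lambda}$ inside $V_\lambda$, use $v\in W^P$ for $v\mathfrak{b}_Lv^{-1}\subseteq\mathfrak{b}$, and use $q\le w$ (from Proposition~\ref{toom}) for the inclusion $V_\lambda^q\subseteq V_\lambda^w$. You spell out a few steps the paper leaves terse (the explicit computation $e_\beta\cdot v_{u^{-1}\lambda}=\dot u^{-1}e_{u\beta}\cdot v_\lambda$, the deduction $q_0=q$, and the inversion-compatibility of the Demazure product), but the decomposition, the key lemmas, and the logical flow are identical to the paper's.
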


\begin{proof}
One readily checks that $vu^{-1}\lambda$ is a highest weight for the (possibly reducible) $vLv^{-1}$-module $V_\lambda$. That is to say, $vB_Lv^{-1}$ stabilizes the line spanned by $vu^{-1} v_\lambda$; this is because $u\in W^P$ and $B$ stabilizes $\C v_\lambda$. Therefore there exists an $vLv^{-1}$-submodule of $V_\lambda$ isomorphic to $V_{vLv^{-1},vu^{-1}\lambda}$ with highest weight vector $vu^{-1}v_\lambda$. Moreover, in the decomposition of $V_\lambda$ into irreducible $vLv^{-1}$-modules, there is only one submodule with this highest weight, since $L$ contains the full torus and the weight space $V_\lambda(vu^{-1}\lambda)$ is $1$-dimensional. 

Recall that $vyv^{-1}vu^{-1} = vyu^{-1} = v(v^{-1}*w) = q$. 
By definition, $V_{vLv^{-1},vu^{-1}\lambda}^{vyv^{-1}}$ is the smallest $vB_Lv^{-1}$-submodule of $V_{vLv^{-1},vu^{-1}\lambda}$ containing $qv_\lambda$. It is therefore also the smallest $vB_Lv^{-1}$-submodule of $V_\lambda$ containing $qv_\lambda$, and is naturally contained in $V_\lambda^q$ -- the smallest $B$-submodule of $V_\lambda$ containing $qv_\lambda$. Since $q\le w$ (Lemma \ref{toom}), we also have $V_\lambda^q\subseteq V_\lambda^w$. 
\end{proof}

To summarize, Proposition \ref{facetDemazurestructure2} manifests a $vB_Lv^{-1}$-Demazure module (for $vLv^{-1}$) sitting inside $V_\lambda^w$. Its highest weight is $vu^{-1}\lambda$ and lowest weight is $q\lambda$; moreover $vyv^{-1}$ is the element of $vW_Lv^{-1}$ that takes the highest weight to the lowest weight, since $vyu^{-1}=q$. 
Together, Corollary \ref{facetDemazurestructure1} and Proposition \ref{facetDemazurestructure2} will be used to complete the induction step in proving Theorem \ref{lattice}.

\section{Lattice points in Demazure polytopes} \label{IntegralPoints}

Our main goal for the rest of this paper is to establish Theorem \ref{lattice}, that the characters $\op{char} V_\lambda^w$ are saturated. For each dominant weight $\lambda$, we will be interested in the shifted lattice $\lambda+Q$ and refer to its elements as ``lattice points,'' the reference to $\lambda$ being understood. 
Succinctly, we wish to show that if $\mu \in P_{\lambda}^w$ is a lattice point, then $V_{\lambda}^w(\mu) \neq 0$. 

First we recall a lemma that will be needed in the proof of the main theorem. 

\begin{lemma}\label{nls}
Suppose $\mu\in P_\lambda^w$ and $s_iw \to w$. Then $(\mu+\Q\alpha_i)\cap P_\lambda^{s_iw}$ is nonempty. 
\end{lemma}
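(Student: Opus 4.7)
The plan is to extract the desired lattice point on $\mu+\Q\alpha_i$ by taking the \emph{maximum} of the line's intersection with $P_\lambda^w$ and showing this maximum already lies in the smaller polytope $P_\lambda^{s_iw}$. All the necessary machinery was developed in Section \ref{sectionconvexitysetup}.

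More concretely, first I would observe that since $\mu\in P_\lambda^w$, the set $I=\{k\in\Q:\mu+k\alpha_i\in P_\lambda^w\}$ is a nonempty closed bounded interval, say $[k'',k']$. Set $\mu':=\mu+k'\alpha_i$. By construction $\mu'$ is maximal in the $\alpha_i$-direction inside $P_\lambda^w$, in the precise sense that $\mu'+k\alpha_i\in P_\lambda^w$ implies $k\le 0$. So the hypotheses of the lemma preceding Corollary \ref{maxs-are-equal} are satisfied (after identifying $P_\lambda^w=P_w^\le$ via Theorem \ref{convexequal} and taking $s_{i_1}=s_i$ in that lemma).

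Next, since $s_iw\to w$ by hypothesis, Corollary \ref{maxs-are-equal} applies and gives
\[
\mu'\in P_{s_iw}^{\le}=P_\lambda^{s_iw},
\]
where the equality is again Theorem \ref{convexequal}. Therefore $k'$ belongs to $\{k\in\Q:\mu+k\alpha_i\in P_\lambda^{s_iw}\}$, proving this set is nonempty, as required.

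There is essentially no obstacle here beyond recognizing that the statement is an immediate application of Corollary \ref{maxs-are-equal}: the hypothesis $s_iw\to w$ is used exactly at that point, and the maximal endpoint $\mu'$ is the natural candidate point to carry over from $P_\lambda^w$ to $P_\lambda^{s_iw}$. Symmetrically, one could have used the minimal endpoint $\mu''$ instead, which would work as well since $s_iw\to w$ implies $s_iP_\lambda^w = P_\lambda^w$ interchanges the two endpoints.
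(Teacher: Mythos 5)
Your proof is correct and follows exactly the paper's approach: the paper's proof of this lemma is the one-line remark ``This follows directly from Corollary~\ref{maxs-are-equal},'' and you have simply spelled out the details of that deduction, correctly identifying $\mu'$ as the maximal endpoint of $(\mu+\Q\alpha_i)\cap P_\lambda^w$ and invoking Theorem~\ref{convexequal} to pass between $P_\lambda^{\,\cdot}$ and $P_{\cdot}^{\le}$. (A small slip in your opening sentence: the lemma asks only for a nonempty rational intersection, not a lattice point, but your actual argument establishes exactly what is needed.)
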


\begin{proof}
This follows directly from Corollary \ref{maxs-are-equal}. 
\end{proof}
We now relate the saturation property to the line segments introduced earlier. We call $\mu + \mathbb{Q}\alpha_i$ the $\alpha_i$-line through $\mu$. If $(\mu+\mathbb{Q}\alpha_i) \cap P_\lambda^{s_{i_1}w} \neq 0$, then this intersection is a line segment (we do not require the endpoints to be distinct) with endpoints $\mu'$ and $\mu''$ as introduced in Section \ref{sectionconvexitysetup}. It is natural to attempt to prove Theorem \ref{lattice} using induction. In the next proposition, we isolate the difficult part of the inductive step, so that it may be analyzed separately.

We begin by fixing a reduced word $w=s_{i_1} \dots s_{i_k}$ so that $s_{i_1}w \rightarrow w$. Given a lattice point $\mu \in P^w_{\lambda}$, it is natural to try to understand $V_{\lambda}^w(\mu)$ in terms of some weight space $V_{\lambda}^{s_{i_2} \dots s_{i_k}}(\nu)$ for some nontrivial weight $\nu=\mu+n\alpha_{i_1}$. 
\begin{proposition}\label{indsketch}
	Let $\mu \in P^w_{\lambda}$ be a lattice point and $s_{i_1}w \rightarrow w$ as above. Assume that $\op{char} V_\lambda^{s_{i_2}\dots s_{i_k}}$ is saturated. Let $\mu''.\mu'$ denote the associated $\alpha_{i_1}$-line segment $(\mu+\mathbb{Q}\alpha_{i_1})\cap P_{\lambda}^{s_{i_2} \dots s_{i_k}}$ (which is not empty by Lemma \ref{nls}). If the line segment $\mu''.\mu'$ contains a lattice point $\nu \in \lambda+Q$, then $V_{\lambda}^w(\mu) \neq 0$ (note that we do not assert that $\mu', \mu''$ are lattice points). 
\end{proposition}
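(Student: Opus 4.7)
The plan is to exploit the $\mathfrak{sl}_{2,\alpha_{i_1}}$-module structure of $V_\lambda^w$ (Proposition~\ref{DemProps}(2) and Corollary~\ref{sl2-stable}) together with the saturation hypothesis on $V_\lambda^{w'}$, where $w':=s_{i_2}\cdots s_{i_k}$. Let $\mu_0$ denote the unique $s_{i_1}$-fixed point on the line $\mu+\mathbb{Q}\alpha_{i_1}$; this is also the midpoint of the $s_{i_1}$-symmetric segment $(\mu+\mathbb{Q}\alpha_{i_1})\cap P_\lambda^w$ (existence guaranteed by the remark following Corollary~\ref{min-0-max}). By Corollary~\ref{maxs-are-equal}, the positive endpoint of this larger segment coincides with the $\mu'$ appearing in the statement. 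Since $\op{char}V_\lambda^w$ is $s_{i_1}$-invariant, $\dim V_\lambda^w(\mu)=\dim V_\lambda^w(s_{i_1}\mu)$, so after possibly replacing $\mu$ by $s_{i_1}\mu$ I may assume that $\mu$ lies in the closed positive half $[\mu_0,\mu']$ of the line.

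I would next produce a lattice point $\nu^*$ on the line satisfying $\mu_0\le\mu\le\nu^*$ and $V_\lambda^{w'}(\nu^*)\ne 0$, splitting into two cases. If $\mu\in[\mu'',\mu']$ already, take $\nu^*:=\mu$; saturation of $V_\lambda^{w'}$ combined with $\mu\in\lambda+Q$ gives $V_\lambda^{w'}(\mu)\ne 0$. Otherwise $\mu<\mu''$ (since $\mu\le\mu'$ automatically), and the given lattice point $\nu\in[\mu'',\mu']$ satisfies $\nu\ge\mu''>\mu\ge\mu_0$; in this subcase, set $\nu^*:=\nu$ and apply saturation of $V_\lambda^{w'}$ to $\nu^*$. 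Either way, the inclusion $V_\lambda^{w'}\subseteq V_\lambda^w$ of Proposition~\ref{DemProps}(1) yields $\dim V_\lambda^w(\nu^*)\ge 1$.

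The final step transports nonvanishing from $\nu^*$ down to $\mu$ via the $\mathfrak{sl}_{2,\alpha_{i_1}}$-structure. Decomposing $V_\lambda^w$ into its $\mathfrak{sl}_{2,\alpha_{i_1}}$-irreducible summands, each summand concentrates on a single $\alpha_{i_1}$-line and contributes a symmetric interval of multiplicity-one $T$-weights about the midpoint of that line. It follows that the function $\tau\mapsto\dim V_\lambda^w(\tau)$, restricted to $\mu+\mathbb{Q}\alpha_{i_1}$, is symmetric about $\mu_0$ and weakly decreasing as $|\tau-\mu_0|$ grows. Since $\mu_0\le\mu\le\nu^*$, this unimodality forces $\dim V_\lambda^w(\mu)\ge\dim V_\lambda^w(\nu^*)\ge 1$, which is the desired conclusion.

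The delicate point in my view is the geometric bookkeeping along the $\alpha_{i_1}$-line: in particular, the identification of $\mu'$ as the common positive endpoint of the intersections with $P_\lambda^{w'}$ and $P_\lambda^w$ via Corollary~\ref{maxs-are-equal}, and the correct case split depending on whether $\mu$ already lies in the smaller segment $[\mu'',\mu']$. Once this is settled, the unimodality of $\mathfrak{sl}_2$-weight multiplicities along the line is routine representation theory and the argument closes quickly.
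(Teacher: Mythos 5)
Your proof is correct and follows essentially the same path as the paper: produce a lattice point on the segment $\mu''.\mu'$ with nonzero multiplicity in $V_\lambda^{w'}$ via saturation, include into $V_\lambda^w$, and then transport nonvanishing to $\mu$ using the symmetry and unimodality of $\mathfrak{sl}_{2,\alpha_{i_1}}$-characters along the $\alpha_{i_1}$-line. The only packaging difference is that the paper avoids your case split by directly choosing $\mu'_{\mathrm{int}}$, the largest lattice point on $\mu''.\mu'$, which automatically satisfies $s_{i_1}\mu'_{\mathrm{int}}\le\mu\le\mu'_{\mathrm{int}}$.
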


\begin{proof}	
	Since $P^{w}_{\lambda}$ is stable under the action of $s_{i_1}$, and since $\mu \in P_{\lambda}^w$, we must have both $\mu$ as well as $s_{i_1}\mu \in P^w_{\lambda}$. Thus we see that we must have both \[s_{i_1} \mu' \leq \mu \leq \mu'\] and similarly \[s_{i_1} \mu' \leq s_{i_1} \mu \leq \mu'\] in the dominance order on $\mu+\Q\alpha_{i_1}$. We write $\mu'_{int}$ for the largest lattice point in this intersection, namely $\mu'_{int} \in (\lambda+Q) \cap \mu''.\mu'$ and $\mu'_{int}+n \alpha_{i_1} \notin (\lambda+Q) \cap \mu''.\mu'$ for any $n \in \mathbb{Z}^{>0}$. Note that $\mu'_{int}$ exists since $\mu''.\mu' \cap (\lambda+Q)\ne \emptyset$ by assumption. Now since both $\mu$ and $ s_{i_1} \mu$ are in $\lambda+Q$, we actually have \[s_{i_1}\mu'_{int} \leq \mu \le \mu'_{int}\] and \[s_{i_1} \mu'_{int} \le s_{i_1}\mu \leq \mu'_{int}.\]
	
	Now consider the application of the Demazure operator $D_{\alpha_{i_1}}$ to $\op{char} V_{\lambda}^{s_{i_2} \dots s_{i_k}}$. By hypothesis, $e^{\mu'_{int}}$ appears in $\op{char} V_{\lambda}^{s_{i_2} \dots s_{i_k}}$ with positive coefficient. By Proposition \ref{DemProps}(1) and Corollary \ref{sl2-stable}, both $e^{\mu'_{int}}$ and $e^{s_{i_1} \mu'_{int}}$ appear in $\op{char} V_\lambda^w = D_{\alpha_{i_1}}\op{char} (V_{\lambda}^{s_{i_2} \dots s_{i_k}})$ with the same positive multiplicity. Moreover Corollary \ref{sl2-stable} guarantees that every lattice point on the $\alpha_{i_1}$-string beween $\mu'_{int}$ and $s_{i_1} \mu'_{int}$ must also appear with positive multiplicity. Thus we find that $e^{\mu}$ (and $e^{s_{i_1}\mu}$) appear with positive multiplicity in $D_{\alpha_{i_1}}\op{char}(V_{\lambda}^{s_{i_2} \dots s_{i_k}})$. In other words, $V_{\lambda}^w(\mu) \neq 0$.
	\end{proof}

In view of the above proposition, it is sufficient for us to find nontrivial lattice points on $\mu''.\mu'$. This is our next goal, whose proof will occupy the next two sections. For technical reasons which will become clear, we consider two cases: $\mathfrak{g}$ of type $A_r$ and $\mathfrak{g}$ of type $B_r,C_r,$ or $D_r$.

\section{Saturation in Type A}\label{sectiontypeA}
The situation in type $A_r$ is much simpler, in that it is merely a corollary of the inequalities defining $P_{\lambda}^w$. We come to the proof immediately. Let $\mu$ be a lattice point in $P_{\lambda}^w$ with $s_{i_1}w \rightarrow w$, and let $\mu''.\mu'=(\mu+\mathbb{Q}\alpha_{i_1}) \cap P_{\lambda}^{s_{i_2} \dots s_{i_k}}$. 

	\begin{proposition}\label{TypeAInt-2}
	In type $A_r$, the endpoints of the line segment $\mu''.\mu'$ are lattice points.
\end{proposition}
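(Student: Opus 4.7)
The plan is to read off the endpoints of the segment $\mu''.\mu'$ directly from the inequality description of $P_\lambda^{s_{i_2}\cdots s_{i_k}}$ given by Theorem \ref{necsufineqs}, and to exploit the type $A_r$ identity $\langle \alpha, wx_i\rangle \in \{-1, 0, 1\}$, valid for every root $\alpha$, every $w\in W$, and every fundamental coweight $x_i$. Together these will force the endpoints to lie in $\lambda + Q$, because they will turn out to be solutions of integer linear equations whose leading coefficients have absolute value one.

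Concretely, first I would parametrize the line as $\mu + t\alpha_{i_1}$ with $t\in\Q$ and substitute into each defining inequality $\langle \cdot, vx_i\rangle \ge \langle \lambda, \overline{(s_{i_2}\cdots s_{i_k})^{-1}*v}\,x_i\rangle$, turning it into a scalar constraint $t\,c_{i,v} \ge d_{i,v}$ with $c_{i,v} := \langle \alpha_{i_1}, vx_i\rangle$ and $d_{i,v} := \langle \lambda, \overline{(s_{i_2}\cdots s_{i_k})^{-1}*v}\,x_i\rangle - \langle \mu, vx_i\rangle$. The upper endpoint $\mu'=\mu+k'\alpha_{i_1}$ is then attained at a tight inequality with $c_{i,v}<0$, giving $k' = d_{i,v}/c_{i,v}$; the lower endpoint $\mu''$ comes symmetrically from a tight inequality with $c_{i,v}>0$.

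The main bookkeeping step is to verify $d_{i,v} \in \Z$. I would split $d_{i,v}$ as $\langle \lambda-\mu, vx_i\rangle + \bigl(\langle \lambda, \overline{(s_{i_2}\cdots s_{i_k})^{-1}*v}\,x_i\rangle - \langle \lambda, vx_i\rangle\bigr)$. The first summand is an integer because $\lambda-\mu\in Q$ and $\langle \alpha_j, x_i\rangle = \delta_{ji}$, so any element of $Q$ pairs integrally with any $wx_i$. The second summand is an integer for the same reason, since $\overline{(s_{i_2}\cdots s_{i_k})^{-1}*v}^{\,-1}\lambda$ and $v^{-1}\lambda$ lie in the Weyl orbit of $\lambda$ and therefore differ by an element of $Q$.

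For the coefficient side, I would invoke the standard realization of type $A_r$: with $W = S_{r+1}$ permuting the coordinates of $\R^{r+1}$, each $x_i$ is the projection of a $\{0,1\}$-vector with exactly $i$ ones and each root is a difference $\epsilon_j - \epsilon_k$, which yields $c_{i,v}\in\{-1,0,1\}$ at once. At a binding constraint one has $c_{i,v}\ne 0$ and hence $|c_{i,v}|=1$, and combined with $d_{i,v}\in\Z$ this gives $k', k''\in\Z$, so $\mu', \mu'' \in \lambda+Q$. I expect no genuine obstacle beyond the integrality of $d_{i,v}$, which requires a moment of care but follows cleanly from the duality $\langle \alpha_j, x_i\rangle = \delta_{ji}$ together with the fact that Weyl orbits of $\lambda$ sit in a single $Q$-coset.
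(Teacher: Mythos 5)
Your proof is correct and takes essentially the same route as the paper: restrict each defining inequality of $P_\lambda^{s_{i_2}\cdots s_{i_k}}$ to the line $\mu+\Q\alpha_{i_1}$, observe that a constraint with $\langle\alpha_{i_1},vx_i\rangle\neq 0$ must be tight at each endpoint, and then conclude from the type $A_r$ fact $\langle\alpha_{i_1},vx_i\rangle\in\{-1,0,1\}$ and the integrality of the right-hand side that $k',k''\in\Z$. Your spelled-out verification that $d_{i,v}\in\Z$ (via the split into $\langle\lambda-\mu,vx_i\rangle$ and $\langle\lambda,ux_i\rangle-\langle\lambda,vx_i\rangle$) is a more careful version of the paper's brief remark that $u_1\lambda+Q=u_2\mu+Q$ for all $u_1,u_2\in W$, but the underlying argument is identical.
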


\begin{proof}
Since $\mu \in P_{\lambda}^w$, it satisfies our inequalities from Theorem \ref{necsufineqs}; in other words 
\[\langle \lambda, (w^{-1}*v) x_i \rangle \leq \langle \mu, v x_i \rangle \] 
for all maximal parabolics $P_i$ and for all $v \in W^{P_i}$. We also have that $\mu'=\mu+k\alpha_{i_1}$ must satisfy all inequalities defining $P_{\lambda}^{s_{i_2} \dots s_{i_k}}$.

We claim that $\mu'$ lies on some face $\mathcal{F}(v,P_i)$ of $P_\lambda^{s_{i_2}\dots s_{i_k}}$, such that 
$$
\langle \alpha_{i_1},vx_i\rangle \ne 0.
$$
Indeed, suppose that every face $\mathcal{F}(v,P_i)$ containing $\mu'$ satisfies $\langle \alpha_{i_1},vx_i\rangle = 0$. Then $\mu'+\epsilon \alpha_{i_1}$ will satisfy every inequality defining $P_{\lambda}^{s_{i_2} \dots s_{i_k}}$ if $\epsilon>0$ is small enough. This contradicts the maximality of $\mu'$.

We now choose $\mathcal{F}(v,P_i)$ to be such a face:  
\[\langle \lambda, \left((s_{i_2} \dots s_{i_k})^{-1}*v\right) x_i \rangle = \langle \mu+k\alpha_{i_1}, v x_i \rangle.\] 
This in turn becomes 
\[\langle \lambda, \left((s_{i_2} \dots s_{i_k})^{-1}*v\right) x_i\rangle -\langle \mu, vx_i\rangle = k \langle \alpha_{i_1}, vx_i \rangle.\] 

Since $\lambda+Q = \mu+Q$ and hence $u_1\lambda+Q = u_2 \mu+Q$ for any pair $u_1,u_2\in W$, 
the left hand side is an integer. In type $A_r$, the possible pairings $\langle \alpha_{i_1}, vx_i \rangle = \langle v^{-1} \alpha_{i_1}, x_i \rangle$ belong to $\{-1,0,1\}$ (no root in type $A_r$, when expressed as a sum of simple roots $\beta = \sum c_i \alpha_i$, can have any coefficients $c_i$ other than $-1, 0$ or $1$.) Since we picked $v$ such that $\langle \alpha_{i_1},vx_i\rangle\ne0$, 
 $k$ is an integer, and we have shown that $\mu'=\mu+k\alpha_{i_1}\in \lambda+Q$ is a lattice point.
 
 A similar argument holds for $\mu''$. 
\end{proof}

\begin{theorem}\label{A-result}
	In type $A_r$, Demazure characters $\op{char}(V_{\lambda}^{w})$ are saturated. 
\end{theorem}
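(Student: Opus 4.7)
The plan is to prove Theorem \ref{A-result} by induction on $\ell(w)$. The base case $w = e$ is immediate: $V_\lambda^e$ is the one-dimensional span of $v_\lambda$, so $\op{char}V_\lambda^e = e^\lambda$ and $P_\lambda^e = \{\lambda\}$, the only lattice point of which is $\lambda$ itself.

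For the inductive step, fix $w$ with $\ell(w) \geq 1$ and choose a reduced expression $w = s_{i_1}\cdots s_{i_k}$, so that $s_{i_1} w \to w$. By induction, $\op{char}V_\lambda^{s_{i_2}\cdots s_{i_k}}$ is saturated. Given a lattice point $\mu \in P_\lambda^w$ (with $\lambda - \mu \in Q$), the strategy is to produce the nonvanishing $V_\lambda^w(\mu) \neq 0$ by pulling back to an appropriate weight of the smaller Demazure module along the $\alpha_{i_1}$-line through $\mu$.

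Two ingredients enable the reduction. First, Lemma \ref{nls} (a consequence of Corollary \ref{maxs-are-equal}) ensures that the segment $\mu''.\mu' := (\mu + \mathbb{Q}\alpha_{i_1}) \cap P_\lambda^{s_{i_2}\cdots s_{i_k}}$ is nonempty. Second, Proposition \ref{indsketch} says that if this segment contains any lattice point $\nu \in \lambda + Q$, then the inductive hypothesis (giving $e^\nu$ nontrivially in $\op{char}V_\lambda^{s_{i_2}\cdots s_{i_k}}$) combined with $\mathfrak{sl}_{2,\alpha_{i_1}}$-stability (Corollary \ref{sl2-stable}) propagates nonvanishing to every lattice point of the $\alpha_{i_1}$-string between $\nu$ and $s_{i_1}\nu$ inside $\op{char}V_\lambda^w$ — and $\mu$ lies on this string.

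The main obstacle is therefore to locate a lattice point on $\mu''.\mu'$, and this is exactly the content of Proposition \ref{TypeAInt-2}: in type $A_r$ the endpoints of the segment themselves lie in $\lambda + Q$. The argument uses an active facet-inequality $\langle \lambda, \overline{(s_{i_2}\cdots s_{i_k})^{-1} * v} x_i\rangle = \langle \mu + k\alpha_{i_1}, v x_i\rangle$ with $\langle \alpha_{i_1}, v x_i\rangle \neq 0$; since $u_1\lambda + Q = u_2\mu + Q$ for all $u_1,u_2\in W$, the left-hand side minus $\langle \mu, vx_i\rangle$ is integral, and the type-$A$-specific fact that $\langle v^{-1}\alpha_{i_1}, x_i\rangle \in \{-1,0,1\}$ forces $k \in \mathbb{Z}$. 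With $\nu = \mu'$ as the required lattice point on $\mu''.\mu'$, Proposition \ref{indsketch} closes the induction.
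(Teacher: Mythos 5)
Your proposal is correct and follows essentially the same route as the paper's proof: induction on $\ell(w)$, using Lemma \ref{nls} to produce the nonempty segment $\mu''.\mu'$, Proposition \ref{TypeAInt-2} to locate a lattice point on it (via the active inequality with $\langle v^{-1}\alpha_{i_1},x_i\rangle \in \{-1,0,1\}$), and Proposition \ref{indsketch} to propagate nonvanishing back to $V_\lambda^w(\mu)$. The only cosmetic difference is that the paper also records the length-$1$ base case explicitly via $SL_{2}$-theory, while you absorb it into the inductive step, which is harmless since the $\ell(w)=0$ base case and the reduction already cover it.
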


\begin{proof}
	We proceed by induction on the length of $w$. For $\ell(w)=0$ this is trivial, and moreover it is clear for $\ell(w)=1$, since in this case $w=s_i$ for some simple reflection and the Demazure module $V_{\lambda}^w$ is precisely an irreducible representation of $SL_{2,\alpha_i}$ with highest weight $\lambda$ and lowest weight $s_{i}\lambda$. We now assume that the proposition holds for $\ell(w)=k-1$. Let $\ell(w)=k$ and fix a reduced word $w=s_{i_1} \dots s_{i_k}$. Given any lattice point $\mu \in P_{\lambda}^w$, consider the $\alpha_{i_1}$-string through $\mu$, and the associated line segment $\mu''.\mu'$, which exists by Lemma \ref{nls}. By Proposition \ref{TypeAInt-2}, there is some lattice point $\mu'_{int} \in \mu''.\mu'$. By our inductive assumption, $V_{\lambda}^{s_{i_2} \dots s_{i_k}}(\mu'_{int}) \neq 0$. We now apply Proposition \ref{indsketch} to see that $V_{\lambda}^w(\mu) \neq 0$.
	\end{proof}

\section{Saturation in types B,C,D}\label{sectiontypesbcd}
For this entire section we assume that $\mathfrak{g}$ is simple of type $B_r,C_r$ or $D_r$; our results on the line segments $\mu''.\mu'$ admit a uniform treatment in these remaining classical types. We begin with a more careful analysis of the inequalities defining $P_{\lambda}^w$. First note that we may use the exact same argument as in Section \ref{sectiontypeA} to try to understand weights along integral root strings which lie on a face of a polytope $P_{\lambda}^{s_{i_1}w}$. To be precise, for the remainder of this section, let $\mu$ be a lattice point in $P_{\lambda}^w$ with $s_{i_1}w \rightarrow w$, and let $\mu''.\mu'=(\mu+\mathbb{Q}\alpha_{i_1}) \cap P_{\lambda}^{s_{i_2} \dots s_{i_k}}$. 

\begin{lemma}\label{typebcdhalfint}
	Let $\mathfrak{g}$ be of type $B_r,C_r$ or $D_r$. Then the line segment $\mu''.\mu'$ must have half-integral endpoints, or in other words $\mu'',\mu' \in \lambda+\frac{1}{2}Q$.
\end{lemma}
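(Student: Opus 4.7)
The plan is to mimic the argument of Proposition \ref{TypeAInt-2}, modified to accommodate the larger pairings available in types $B_r, C_r, D_r$. Since $\mu'$ is the maximum of the line segment $(\mu+\Q\alpha_{i_1})\cap P_\lambda^{s_{i_2}\cdots s_{i_k}}$, the same argument as before shows that $\mu'$ lies on some face $\mathcal{F}(v,P_i)$ of $P_\lambda^{s_{i_2}\cdots s_{i_k}}$ with $\langle \alpha_{i_1}, vx_i\rangle \neq 0$: otherwise, every face inequality (in the sense of Theorem \ref{necsufineqs}) that is sharp at $\mu'$ is unchanged by moving in the direction $\alpha_{i_1}$, so $\mu' + \epsilon \alpha_{i_1}$ would remain in $P_\lambda^{s_{i_2}\cdots s_{i_k}}$ for small $\epsilon > 0$, contradicting maximality.

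Writing $\mu' = \mu + k\alpha_{i_1}$ and using the sharpness of the face inequality $\langle \lambda, \overline{(s_{i_2}\cdots s_{i_k})^{-1}*v}\, x_i\rangle \le \langle \mu + k\alpha_{i_1}, vx_i\rangle$, I would obtain
\[
\langle \lambda, \overline{(s_{i_2}\cdots s_{i_k})^{-1}*v}\, x_i\rangle - \langle \mu, vx_i\rangle = k\langle \alpha_{i_1}, vx_i\rangle.
\]
The left-hand side lies in $\Z$, because both $\overline{(s_{i_2}\cdots s_{i_k})^{-1}*v}\lambda$ and $v^{-1}\mu$ differ from $\lambda$ by elements of $Q$, and pairing with a fundamental coweight produces an integer.

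Next I would invoke the type-specific root-system input: in any of types $B_r$, $C_r$, $D_r$, when a root $\beta \in \Phi$ is expanded in the simple root basis as $\beta = \sum_j c_j\alpha_j$, each coefficient $c_j$ lies in $\{-2,-1,0,1,2\}$ (this is the only point where type intervenes, and it fails only in types $E,F,G$). Applying this to $\beta = v^{-1}\alpha_{i_1}$ gives $\langle \alpha_{i_1}, vx_i\rangle = \langle v^{-1}\alpha_{i_1}, x_i\rangle \in \{\pm 1, \pm 2\}$ (nonzero by choice of $v$). Therefore $k \in \tfrac{1}{2}\Z$, so $\mu' = \mu + k\alpha_{i_1} \in \lambda + \tfrac{1}{2}Q$. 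The identical argument applied to the other endpoint (using minimality in place of maximality) shows $\mu'' \in \lambda + \tfrac{1}{2}Q$.

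The only technical point deserving care is the face-existence argument in the first paragraph, but this is formally identical to the type $A$ case and requires no modification. The main conceptual observation is simply that the extra factor of $2$ appearing among root coefficients in types $B, C, D$ is exactly what degrades integrality to half-integrality, making the rest of the saturation argument in this section (as foreshadowed by the inductive framework of Proposition \ref{indsketch}) require the subsequent work of producing a genuine lattice point on the segment $\mu''.\mu'$ rather than relying on the endpoints themselves.
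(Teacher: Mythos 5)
Your proposal is correct and follows the same argument as the paper: find a face $\mathcal{F}(v,P_i)$ of $P_\lambda^{s_{i_2}\cdots s_{i_k}}$ containing $\mu'$ with $\langle \alpha_{i_1}, vx_i\rangle \neq 0$, solve for $k$ from the sharp face inequality, and use that root coefficients in types $B_r,C_r,D_r$ are bounded by $2$ in absolute value. Your write-up is slightly more explicit than the paper's (which just references Proposition \ref{TypeAInt-2} for the face-existence step), but there is no substantive difference in the route taken.
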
 

\begin{proof}
	As in the proof of Proposition \ref{TypeAInt-2}, since $\mu'$ is on a face of $P_{\lambda}^{s_{i_2} \dots s_{i_k}}$, at least one of these inequalities is an equality, giving   
	\[\langle \lambda, \left((s_{i_2} \dots s_{i_k})^{-1}*v\right) x_i \rangle = \langle \mu+k\alpha_{i_1}, v x_i \rangle.\] 
	This in turn becomes 
	\[\langle \lambda, \left((s_{i_2} \dots s_{i_k})^{-1}*v\right) x_i\rangle -\langle \mu, vx_i\rangle = k \langle \alpha_{i_1}, vx_i \rangle.\] 
	The left hand side is an integer. In types $B_r,C_r$ and $D_r$, the possible pairings $\langle \alpha_{i_1}, vx_i \rangle = \langle v^{-1} \alpha_{i_1}, x_i \rangle \in \{-2,-1,0,1,2\}$. Again we can find a pair $v, x_i$ so that $\langle v^{-1}\alpha_{i_1},x_i\rangle$ is nonzero, in which case we see that $k \in \frac{1}{2} \mathbb{Z}$.

A similar argument applies to $\mu''$. 
\end{proof}

Note then that if $\mu'$ and $\mu''$ are distinct, then either $\mu'$ or $\mu'' \in \lambda+Q$, or, if both $\mu'$ and $\mu''$ are distinct half-integral weights, then there must exist some lattice point $\nu \in \mu''.\mu' \cap (\lambda+Q)$. We can summarize this as follows:

\begin{corollary}\label{segmentcor}Let $\mathfrak{g}$ and $\mu''.\mu'$ be as in Lemma \ref{typebcdhalfint}.
	If $\mu'' \neq \mu'$ then $\mu'_{int}$ exists.
\end{corollary}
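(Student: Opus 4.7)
The plan is to carry out a short case analysis on the scalars $k', k''$ that parametrize the endpoints of the segment along the line $\mu + \Q\alpha_{i_1}$. Writing $\mu' = \mu + k'\alpha_{i_1}$ and $\mu'' = \mu + k''\alpha_{i_1}$ with $k'' \le k'$, the existence of a lattice point on the segment amounts to producing an integer $n \in [k'', k']$, because then $\mu + n\alpha_{i_1} \in \lambda + Q$ (since $\mu \in \lambda + Q$ and $n\alpha_{i_1} \in Q$).

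First I would observe that $k', k'' \in \tfrac12 \Z$. Indeed, Lemma \ref{typebcdhalfint} gives $\mu', \mu'' \in \lambda + \tfrac12 Q$, and since $\mu \in \lambda + Q \subset \lambda + \tfrac12 Q$, we get $k'\alpha_{i_1}, k''\alpha_{i_1} \in \tfrac12 Q$. In types $B_r, C_r, D_r$ the simple root $\alpha_{i_1}$ is a primitive vector in $Q$, so $k', k'' \in \tfrac12 \Z$ as claimed.

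Next I would split into cases. If $k' \in \Z$, then $\mu' \in \lambda + Q$ is itself a lattice point on the segment, and similarly if $k'' \in \Z$. Otherwise both $k'$ and $k''$ lie in $\tfrac12 + \Z$, so their difference $k' - k''$ is an integer. The hypothesis $\mu' \ne \mu''$ forces $k' > k''$, hence $k' - k'' \ge 1$. Consequently the interval $[k'' + \tfrac12, k' - \tfrac12]$ contains at least one integer $n$, which yields a lattice point $\mu + n\alpha_{i_1} \in (\lambda + Q) \cap \mu''.\mu'$. In every case the set $(\lambda + Q) \cap \mu''.\mu'$ is nonempty, so $\mu'_{\op{int}}$ exists.

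The only substantive ingredient beyond Lemma \ref{typebcdhalfint} is the primitivity of simple roots in $Q$, which is immediate in the classical non-type-$A$ settings under consideration; I do not anticipate any obstacle in this argument.
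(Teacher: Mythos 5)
Your proof is correct and follows essentially the same reasoning the paper gives in the short remark immediately preceding the corollary (which it leaves without a displayed proof): either an endpoint is already in $\lambda+Q$, or both endpoints are proper half-integral points, in which case distinctness forces a lattice point strictly between them. Your write-up merely makes explicit the reduction to scalars $k',k''\in\frac12\Z$ via primitivity of $\alpha_{i_1}$ in $Q$, which is a fine and correct elaboration of the paper's terse argument.
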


\begin{remark}
	We do not see an easy way to obtain a similar corollary in the exceptional types; for instance in type $E_8$ we have, a priori, $\mu'=\mu+k' \alpha_{i_1}$ and $\mu''=\mu+k''\alpha_{i_1}$ for $k',k'' \in \frac{1}{6}\mathbb{Z}$, and there may be many ways for $\mu',\mu''$ to be distinct and for $\mu''.\mu' \cap (\lambda+Q) = \varnothing$. Thus this technique is not well-adapted to prove the saturation of Demazure polytopes in the exceptional types. Nonetheless we still expect saturation to hold, see our discussion in Section \ref{exceptionals}.
\end{remark}

Thus the remaining outstanding case to understand is the possibility that $\mu''=\mu' \in \lambda+\frac{1}{2}Q$ and not in $\lambda+Q$. However we will not approach this case directly, using induction instead. 

In order to prepare for the induction, we first prove some intermediate results analyzing what happens when $\op{dim} P_{\lambda}^{s_iw} < r$, where $r = \op{rank}(Q)$, the number of simple roots. 

\begin{lemma}
	Let $\lambda$ be a dominant weight and $w\in W$ any Weyl group element. Then $dim P_{\lambda}^w = r$ if and only if  $\op{supp}(\overline{w}) = \{1,\hdots,r\}$, where $\overline{w} \in W^{\lambda}$.
\end{lemma}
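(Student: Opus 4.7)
My plan is to first reduce to the case $w \in W^\lambda$ — since $V_\lambda^w = V_\lambda^{\overline{w}}$ and hence $P_\lambda^w = P_\lambda^{\overline{w}}$ — and then prove the sharper statement
\[
\dim P_\lambda^w = |\op{supp}(w)| \qquad (w \in W^\lambda),
\]
from which the lemma follows immediately. The easy direction ($\leq$) will use Theorem \ref{convexequal}: given any $u\le w$, I would pick a reduced expression $u = s_{j_1}\cdots s_{j_m}$ that appears as a subword of a fixed reduced expression of $w$, and telescope
\[
u\lambda - \lambda = \sum_{\ell=1}^{m} s_{j_1}\cdots s_{j_{\ell-1}}(s_{j_\ell}\lambda - \lambda) \;\in\; \Z\text{-span}\{\alpha_{j_1},\dots,\alpha_{j_m}\} \;\subseteq\; \Q\text{-span}\{\alpha_i : i \in \op{supp}(w)\}.
\]
This shows $P_\lambda^w - \lambda$ sits inside an $|\op{supp}(w)|$-dimensional subspace.

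For the matching lower bound I would induct on $\ell(w)$, the case $\ell(w)=0$ being trivial. Given $w\in W^\lambda$ of positive length, pick any simple $s_j$ with $ws_j < w$ and put $w' := ws_j$. The key preliminary (a short combinatorial lemma I will record separately) is that $w'\in W^\lambda$: by the coset/Cayley-graph distance argument, $wW_\lambda$ sits at distance $\ell(w)$ from the identity coset, so its neighbour $w's_j$-translate $ws_jW_\lambda$ lies at distance exactly $\ell(w)-1$, and since $ws_j$ itself realizes this distance it is the minimum-length representative of its coset. A braid-move argument then yields $\op{supp}(w) = \op{supp}(w')\cup\{j\}$. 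The case $j\in\op{supp}(w')$ is immediate from the containment $P_\lambda^{w'}\subseteq P_\lambda^w$ and the induction hypothesis.

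The essential case is $j\notin\op{supp}(w')$; here the step rests on the computation
\[
w\lambda - w'\lambda \;=\; w's_j\lambda - w'\lambda \;=\; -\langle\lambda,\alpha_j^\vee\rangle\, w'\alpha_j.
\]
I will show the right-hand side does not lie in $\Q\text{-span}\{\alpha_i : i\in\op{supp}(w')\}$, which upgrades $\dim P_\lambda^{w'}=|\op{supp}(w')|$ to $\dim P_\lambda^w\ge|\op{supp}(w')|+1=|\op{supp}(w)|$. The coefficient $\langle\lambda,\alpha_j^\vee\rangle$ is nonzero because otherwise $s_j\in W_\lambda$ and the shorter element $w'=ws_j\in wW_\lambda$ would contradict minimality of $w$ in $wW_\lambda$. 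The vector $w'\alpha_j$ decomposes by the same telescoping device as
\[
w'\alpha_j \;=\; \alpha_j + (\text{$\Z$-combination of $\{\alpha_i : i\in\op{supp}(w')\}$}),
\]
so its $\alpha_j$-coefficient is $1\ne 0$ since $j\notin\op{supp}(w')$. The only real (and mild) obstacle in the whole argument is the lemma that $W^\lambda$ is closed under right-multiplication by descending simple reflections; everything else is bookkeeping.
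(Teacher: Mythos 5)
Your plan has a genuine gap in the induction step: the claim that $w' := ws_j \in W^\lambda$ whenever $w \in W^\lambda$ and $ws_j < w$ is \emph{false}. For a concrete counterexample, take type $A_2$ with $\lambda = \omega_1$, so $W_\lambda = \{e, s_2\}$ and $W^\lambda = \{e, s_1, s_2s_1\}$. Then $w = s_2s_1 \in W^\lambda$ and $ws_1 = s_2 < w$, but $s_2 \notin W^\lambda$ (its coset is $W_\lambda$ itself, with minimal representative $e$). The ``Cayley-graph distance'' heuristic breaks down because right multiplication by $s_j$ is not well-defined on $W/W_\lambda$ (changing the coset representative changes the image coset), so there is no adjacency structure to appeal to. This is not a cosmetic problem for your argument: with $w' = s_2 \notin W^\lambda$, your inductive claim $\dim P_\lambda^{w'} = |\op{supp}(w')|$ is simply false ($\dim P_\lambda^{s_2} = 0 \ne 1$), and the induction yields only $\dim P_\lambda^w \ge 1$, short of the needed $2$.

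The fix is to use \emph{left} descents. If $w \in W^\lambda$ and $s_jw < w$, then $s_jw \in W^\lambda$: for any $s_i \in W_\lambda$, minimality gives $w\alpha_i > 0$, and $w\alpha_i \ne \alpha_j$ (else $ws_i = s_jw < w$, contradicting $ws_i > w$), so $s_j(w\alpha_i) > 0$. Setting $w' := s_jw$, the weight difference becomes $w\lambda - w'\lambda = -\langle w'\lambda,\alpha_j^\vee\rangle\,\alpha_j$, directly a rational multiple of $\alpha_j$ with nonzero coefficient (zero would force $w\lambda = w'\lambda$, i.e., $w$ and the strictly shorter $w'$ in the same coset, contradicting minimality of $w$). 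When $j \notin \op{supp}(w')$, this gives a new direction and the induction closes as you intend; when $j \in \op{supp}(w')$ the inclusion $P_\lambda^{w'} \subseteq P_\lambda^w$ suffices.

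For comparison: the paper avoids induction entirely. Working with $w = \overline{w}$, for each $i \in \{1,\dots,r\}$ it locates a factorization $w = \hat v_i\, s_i\, v_i$ with additive lengths, and shows $v_i\lambda \ne s_iv_i\lambda$ (else $\hat v_i v_i$ would be a shorter coset representative). The pair of vertices $v_i\lambda, s_iv_i\lambda \in P_\lambda^w$ then puts a nonzero multiple of $\alpha_i$ in the affine span; doing this for all $i$ gives $\dim P_\lambda^w \ge r$ in one stroke. Your (corrected) inductive route buys the slightly stronger quantitative statement $\dim P_\lambda^w = |\op{supp}(\overline{w})|$ explicitly, but at the cost of the auxiliary combinatorial lemma on left descents; the paper's direct argument needs no such preliminary and gets the same bound.
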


\begin{proof}
Note that $\dim P_\lambda^w \le r$ since $P_\lambda^w$ is contained in the hyperplane $\lambda + \Q(Q)$. 

	($\Rightarrow$) Suppose $i \notin \op{supp}(\bar w)$. Since $P_\lambda^{w} = P_\lambda^{\bar w}$, we may as well assume $w = \bar w$. Then $\langle u\lambda, x_i \rangle=\langle \lambda, x_i \rangle$ for every $u \leq w$. Thus the polytope $P_{\lambda}^w$ lies in the smaller hyperplane given by $\langle \lambda-y, x_i \rangle =0$.

($\Leftarrow$) Suppose $\op{supp}(\bar w) = \{1,\hdots,r\}$. Once again, let us assume $w = \bar w$. For each $i\in \{1,\hdots,r\}$, there exists some right (reduced) subword of $w$ that starts with $s_i$; in other words, there is a $v_i\le w$ such that $s_iv_i \le w$, with $v_i\to s_iv_i$. 

We claim that $v_i\lambda\ne s_iv_i\lambda$. Let $\hat v_i$ be the remaining left subword of $w$ such that $w = \hat v_i s_i v_i$ with $\ell(w) = \ell(\hat v_i)+ 1+\ell(v_i)$. If $v_i\lambda = s_iv_i\lambda$, then $w\lambda = \hat v_i v_i \lambda$, and $\ell(\hat v_i v_i)\le \ell(\hat v_i) + \ell(v_i)<\ell(w)$. This contradicts that $w$ is the min-length representative in the coset $wW_\lambda$. 

So $v_i\lambda - s_iv_i\lambda = \langle v_i\lambda, \alpha_i^\vee\rangle \alpha_i\ne 0$. Therefore $\alpha_i$ belongs to the linear span of $P_\lambda^w$. Since this holds for each $i$, $\dim P_\lambda^w \ge  r$.
\end{proof}	

Consider the hyperplanes defined by all $\mu$ such that $\langle \lambda, x_i \rangle = \langle \mu, x_i \rangle$. We call the faces given by these hyperplanes ``$\lambda$-adjacent".

\begin{lemma}\label{str}
	If $\op{dim} P_{w}^{\lambda}< \op{rank}(Q)$ and $\mu\in P_w^\lambda$, then $\mu$ is on a $\lambda$-adjacent face.
\end{lemma}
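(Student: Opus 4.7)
The plan is to leverage the preceding dimension lemma, which equates $\dim P_\lambda^w = r$ with the condition $\op{supp}(\bar w) = \{1,\dots,r\}$ for $\bar w \in W^\lambda$. Since we assume $\dim P_w^\lambda < r$, the contrapositive guarantees some index $i \in \{1,\dots,r\}$ with $i \notin \op{supp}(\bar w)$; this $i$ will index the $\lambda$-adjacent face that contains all of $P_\lambda^w$.

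First I would reduce to the case $w = \bar w$ since $P_\lambda^w = P_\lambda^{\bar w}$. Then I would observe that every $u \leq \bar w$ can be written as a product of simple reflections $s_j$ with $j \neq i$ (by the subword property of the Bruhat order). Since simple reflections act on the fundamental coweights via $s_j x_i = x_i - \langle \alpha_j, x_i\rangle \alpha_j^\vee = x_i - \delta_{ij}\alpha_j^\vee$, each such $s_j$ fixes $x_i$, so $u^{-1} x_i = x_i$.

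Consequently, for every vertex $u\lambda$ of $P_\lambda^w$ we obtain
\[
\langle u\lambda, x_i \rangle = \langle \lambda, u^{-1} x_i \rangle = \langle \lambda, x_i \rangle.
\]
Taking arbitrary convex combinations, any $\mu \in P_\lambda^w$ likewise satisfies $\langle \mu, x_i \rangle = \langle \lambda, x_i \rangle$. By the definition preceding the lemma, this precisely says that $\mu$ lies on the $\lambda$-adjacent face corresponding to the hyperplane $\langle \lambda, x_i \rangle = \langle \mu, x_i \rangle$, which is what we wished to show.

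There is essentially no hard step here: the argument is a direct unpacking of the $\op{supp}(\bar w)$ characterization combined with the action of $W$ on fundamental coweights. The only minor subtlety is to recognize that, in this degenerate case, the entire polytope in fact lies on the $\lambda$-adjacent hyperplane, so the conclusion holds uniformly for every $\mu \in P_\lambda^w$.
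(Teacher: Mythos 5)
Your proposal is correct and reaches the conclusion cleanly. The strategy is the same at the top level — invoke the preceding dimension lemma to produce an index $i\notin\op{supp}(\bar w)$, then show the entire polytope lies in the hyperplane $\langle\cdot,x_i\rangle=\langle\lambda,x_i\rangle$ — but the verification is carried out with a different tool. The paper uses the inequality description of $P_\lambda^w$ from Theorem~\ref{necsufineqs}: it observes that $\overline{w^{-1}*e}=e$ modulo $W_{P_i}$ (since $i\notin\op{supp}(w^{-1})=\op{supp}(w)$), giving $\langle\lambda,x_i\rangle\le\langle\mu,x_i\rangle$, and then squeezes with the dominance bound $\langle\mu,x_i\rangle\le\langle\lambda,x_i\rangle$. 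You instead work with the vertex description from Theorem~\ref{convexequal}: every $u\le\bar w$ is a subword in reflections $s_j$ with $j\ne i$, each of which fixes $x_i$, so $\langle u\lambda,x_i\rangle=\langle\lambda,x_i\rangle$ for every vertex, and the equality propagates to the full convex hull. Your route is a bit more elementary and self-contained (it sidesteps the GIT-derived inequality machinery entirely), while the paper's route stays inside the inequality framework it has already developed, which makes the degenerate case feel like a specialization of the general one. Both arguments are sound and equally short; neither has a hidden gap.
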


\begin{proof}
	Since $\op{dim} P_w^{\lambda} < \op{rank}(Q)$, we know that $\op{supp} ( \bar{w})$, and thus also $\op{supp} (\bar w^{-1})$, is not all of $\{1,\hdots,r\}$; here we use $\bar w$ to indicate the min-length representative of the coset $wW_\lambda$. For simplicity, assume $w = \bar w$. Let $x_i$ be the fundamental coweight corresponding to some index $i\notin \op{supp} (w^{-1})$. 
	
	Note the following useful  inequality among all inequalities defining this Demazure polytope: \[\langle \lambda, w^{-1} * e x_i \rangle \leq \langle \mu, ex_i \rangle. \]
		
	Since $i \notin \op{supp} (w^{-1})$ the first inequality becomes  \[\langle \lambda, x_i \rangle \leq \langle \mu, x_i \rangle. \] 
	Since $\mu\preceq \lambda$, we also have \[\langle \mu, x_i \rangle \leq \langle \lambda, x_i \rangle.  \] Stringing these inequalities together we obtain \[\langle \lambda, x_i \rangle \leq \langle \mu, x_i \rangle \leq \langle \lambda, x_i \rangle,\] and we see that any $\mu$ in $P^{\lambda}_w$ is necessarily contained in the face given by $\langle \lambda, x_i \rangle = \langle \mu, x_i \rangle$.
\end{proof}	

\begin{remark}
Another way to think of Lemma \ref{str} is that the faces corresponding to $(e,P_i)$ and $(w_0,P_\ell)$ coincide whenever $i\not\in \op{supp}(\bar w)$; here $\ell$ is the unique index such that $w_0x_i = -x_\ell$. 
\end{remark}

\begin{theorem} \label{BCD-result}
	Demazure characters in types $B_r,C_r,D_r$ are saturated.
\end{theorem}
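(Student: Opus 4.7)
My plan is to mirror the inductive argument for type $A_r$ in Theorem \ref{A-result}, replacing Proposition \ref{TypeAInt-2} with the sharper Corollary \ref{segmentcor}. I induct on $\ell(w)$, with base cases $\ell(w)\le 1$ trivial. For the inductive step, fix a lattice point $\mu\in P_\lambda^w$, a reduced expression $w=s_{i_1}\cdots s_{i_k}$ with $s_{i_1}w\to w$, and set $w':=s_{i_2}\cdots s_{i_k}$.

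If $\mu\in P_\lambda^{w'}$, the inductive hypothesis combined with the inclusion $V_\lambda^{w'}\hookrightarrow V_\lambda^w$ of Proposition \ref{DemProps}(1) finishes. Otherwise, form the segment $\mu''.\mu'=(\mu+\Q\alpha_{i_1})\cap P_\lambda^{w'}$, which is nonempty by Lemma \ref{nls}. When $\mu'\ne \mu''$, Corollary \ref{segmentcor} supplies a lattice point $\mu'_{int}\in \mu''.\mu'$ and Proposition \ref{indsketch} applies; the analogous argument works if $\mu'=\mu''\in\lambda+Q$. The remaining case is the \emph{bad case}: $\mu'=\mu''\in\lambda+\tfrac{1}{2}Q\setminus(\lambda+Q)$.

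To handle the bad case, I plan to combine the induction on $\ell(w)$ with an outer induction on the semisimple rank $r$ of $G$ (base case $r=0$ trivial). In the bad case the point $\mu'$ lies on a face $\mathcal{F}(v,P_i)$ of $P_\lambda^{w'}$ whose defining pairing satisfies $\langle\alpha_{i_1},vx_i\rangle=\pm 2$; this is exactly what forces $k'$ to be a half-odd integer and hence $\mu'\notin \lambda+Q$. By Corollary \ref{facetDemazurestructure1}, such a face is itself a Demazure polytope for a Levi subgroup $L$ of strictly smaller semisimple rank. I intend to show that in the bad case $\mu$ itself also lies on a proper face of $P_\lambda^w$, at which point the outer inductive hypothesis applies to the corresponding Levi Demazure character, and Proposition \ref{facetDemazurestructure2} transports the resulting weight vector back into $V_\lambda^w(\mu)$. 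The degenerate case $\dim P_\lambda^{w'}<r$ is controlled separately using Lemma \ref{str}, which forces $\mu'$ onto a $\lambda$-adjacent face that again presents itself as a lower-rank Levi Demazure polytope.

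The main obstacle will be the combinatorial-geometric step of rigorously placing $\mu$ on a proper face of $P_\lambda^w$ in the bad case --- the bad-case hypothesis directly constrains only $\mu'$, and one must analyze the inequalities of Theorem \ref{necsufineqs} for $P_\lambda^w$ versus those for $P_\lambda^{w'}$ to transfer this information to $\mu$ via the relationship $w^{-1}*v=(w')^{-1}*(s_{i_1}*v)$ derived from the Demazure product. A secondary concern is verifying that the image of $\mu$ under the face isomorphism of Corollary \ref{facetDemazurestructure1} lands in the appropriate shifted Levi root lattice (a proper sublattice of $Q$ in general), which is the compatibility needed to invoke the outer induction on rank.
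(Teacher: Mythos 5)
Your overall strategy coincides with the paper's: a double induction on semisimple rank and on $\ell(w)$, handling $\mu'\ne\mu''$ via Corollary \ref{segmentcor} and Proposition \ref{indsketch}, and handling the ``bad case'' $\mu'=\mu''\notin\lambda+Q$ by placing $\mu$ on a proper face of $P_\lambda^w$ and invoking the Levi Demazure structure of faces (Corollary \ref{facetDemazurestructure1}, Proposition \ref{facetDemazurestructure2}) together with the rank induction. But the step you flag as the ``main obstacle'' is a genuine gap: you have not actually produced a face of $P_\lambda^w$ through $\mu$, only a face of $P_\lambda^{w'}$ through $\mu'$, and the relation $w^{-1}*v=w'^{-1}*(s_{i_1}*v)$ does not by itself transfer one to the other.

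The paper fills this gap with a short but essential construction. Because $\mu'=\mu''$ is simultaneously the maximum \emph{and} the minimum of $(\mu+\Q\alpha_{i_1})\cap P_\lambda^{w'}$, there must be two tight inequalities at $\mu'$: a pair $(v,P_i)$ with $\langle\alpha_{i_1},vx_i\rangle<0$ (from maximality) and another pair $(u,P_j)$ with $\langle\alpha_{i_1},ux_j\rangle>0$ (from minimality). Choosing positive $a,b$ with $a\langle\alpha_{i_1},vx_i\rangle+b\langle\alpha_{i_1},ux_j\rangle=0$, the functional $avx_i+bux_j$ is $\alpha_{i_1}$-invariant, so the combined inequality $a\langle\lambda,(s_{i_1}w)^{-1}*v\,x_i\rangle+b\langle\lambda,(s_{i_1}w)^{-1}*u\,x_j\rangle\le\langle\mu_1,avx_i+bux_j\rangle$, valid on $P_\lambda^{w'}$, in fact holds on all of $P_\lambda^w$ (translate any $\mu_2\in P_\lambda^w$ along $\Q\alpha_{i_1}$ back into $P_\lambda^{w'}$; the pairing is unchanged). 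This inequality is tight at $\mu'$ and $\alpha_{i_1}$-invariant, hence tight at $\mu$, which is the proper face you need. Two further points you should make explicit: (i) the degenerate dimension cases need their own treatment --- when $\dim P_\lambda^w<r$, Lemma \ref{str} already confines $P_\lambda^w$ to a Levi hyperplane, and when $\dim P_\lambda^w=r$ but $\dim P_\lambda^{w'}=r-1$ the missing support index of $\overline{s_{i_1}w}$ is forced to be $i_1$, which makes $k'$ an integer directly and renders the face argument unnecessary in that sub-case; (ii) when you pass to a Levi, the Levi need not be simple, so you need the product decomposition of Lemma \ref{simple-factors} to apply the inductive hypothesis factor by factor. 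Your observation that the half-integrality stems from a $\pm2$ pairing is correct, and your lattice-compatibility worry resolves itself because the face lies in an affine translate of $\op{span}_\Q Q_{vLv^{-1}}$ and $Q\cap\op{span}_\Q Q_{vLv^{-1}}=Q_{vLv^{-1}}$, but neither of these is where the real work lies.
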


\begin{proof}
	We prove this by double induction on $\op{rank}(Q)$ and length $\ell(w)$. The exact same argument applies to all three non-type-A classical types. Moreover, we will need to use that Demazure characters of type $A_r$ are saturated (Theorem \ref{A-result}). 
	
	For the base case, we merely observe that, in any type and rank, Demazure characters $\op{char}(V_{\lambda}^{e})$ are saturated (trivially, as $P_{\lambda}^e$ is the point $\lambda \in X^*(T)$), and Demazure characters $\op{char}(V_{\lambda}^{s_i})$ are also saturated, based on the $SL_2$ theory.
	
	Now assume that all Demazure characters for ranks $<r$ are saturated, and that in rank $r$, all Demazure characters $P_\lambda^w$ with $\ell(w)<k$ are saturated. 
 Let $\lambda$ be an arbitrary dominant weight and $w\in W$ an element such that $\ell(w) = k$, and choose a reduced word $w=s_{i_1} \dots s_{i_k}$. We consider two cases concerning $P_\lambda^w$. 
 \begin{enumerate}[label=(\alph*)]
 \item If $\op{dim} P_\lambda^w < r$, then by Lemma \ref{str}, there exists an index $i$ such that $i\not\in \op{supp}(\bar w)$. Let $L=L_i$ be the Levi subgroup of the maximal parabolic $P_i$. Clearly $V_{L,\lambda}^w = V_\lambda^w$ as $B_L$-modules (cf. Proposition \ref{facetDemazurestructure2}). Now $L$ has semisimple rank $r-1$ and each simple factor of $L$ is of type $A,B,C,$ or $D$. Saturation of $\op{char}(V_{L,\lambda}^w)$ follows from the saturation property of each of the simple factors (see subsequent, straightforward Lemma \ref{simple-factors}), so by induction $\op{char}(V_{L,\lambda}^w)$ and therefore $\op{char}(V_{\lambda}^w)$ are saturated. See Figure \ref{hey3}.~\\
 
 \tdplotsetmaincoords{70}{115}

\begin{figure}
	\begin{center}
		\begin{tikzpicture}[tdplot_main_coords,scale=.4]

				\draw[red,->] (-6,5,9) -- (8,-2,9);

		\coordinate(a) at (5,4,6);
		\coordinate(b) at (-5,9,6);
		\coordinate(c) at (9,-4,10);
		\coordinate(d) at (4,-9,15);
		\coordinate(e) at (-4,-5,15);
		\coordinate(f) at (-9,5,10);
		\coordinate(g) at (-6,5,9);
		\coordinate(h) at (-2,3,9);
		\coordinate(i) at (0,2,9);
		\coordinate(j) at (2,1,9);
		\coordinate(k) at (4,0,9);
		\coordinate(l) at (6,-1,9);
		\coordinate(m) at (8,-2,9);

		\draw[thick,fill=lightgray,opacity=0.5] (a) -- (b) -- (d) -- (c) -- (a);
		\draw[thick, dashed, black] (b) --(f) -- (e) -- (d);
		
		\node at (a) {\small $\bullet$};
		\node at (a) [label={[label distance=0.01mm]-90:e}] {};
		\node at (b) {\small $\bullet$};
		\node at (b) [label={[label distance=0.01mm]0:$s_1$}] {};
		\node at (c) {\small $\bullet$};
		\node at (c) [label={[label distance=0.01mm]180:$s_2$}] {};
		\node at (d) {\small $\bullet$};
		\node at (d) [label={[label distance=0.01mm]180:$s_2s_1$}] {};
		\node at (e) {\small $\bullet$};
		\node at (f) {\small $\bullet$};
		\node at (g) {\small $\bullet$};
		\node at (g) [label={[label distance=0.01mm]100:$\mu$}] {};
		\node at (h) {\small $\times$};
		\node at (h) [label={[label distance=0.01mm]-70:$\mu''$}] {};
		\node at (i) {\small $\bullet$};
		\node at (j) {\small $\bullet$};
		\node at (k) {\small $\bullet$};
		\node at (l) {\small $\bullet$};
		\node at (m) {\small $\times$};
		\node at (m) [label={[label distance=0.01mm]-90:$\mu'$}] {};

				\end{tikzpicture}
		\caption{\label{hey3} \small In type $A_3$, the polytope $P_{5\varpi_1+4\varpi_2+6\varpi_3}^{s_2s_1}$. Root lines are shown in red for some weights $\mu \in P_{5\varpi_1+4\varpi_2+6\varpi_3}^{s_1s_2s_1}$ but not in $P_{5\varpi_1+4\varpi_2+6\varpi_3}^{s_2s_1}$. In this example $\mu=-6\varpi_1+5\varpi_2+9\varpi_3$, $\mu''=-2\varpi_1+3\varpi_2+9\varpi_3$ and $\mu'=8\varpi_1-2\varpi_2+9\varpi_3$. }
	\end{center}
\end{figure}
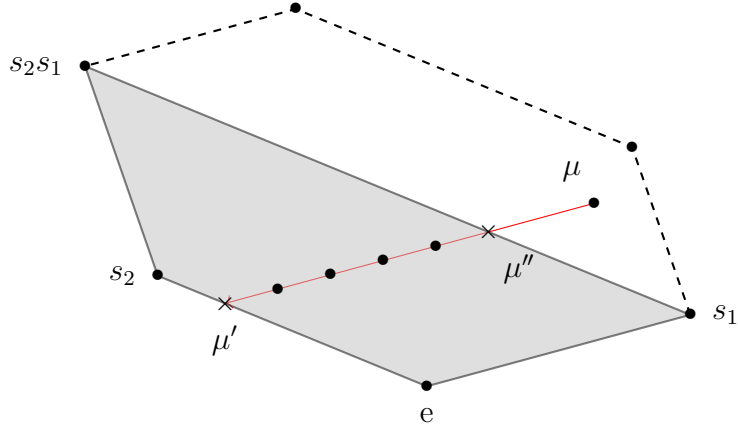
 
 \item Otherwise, $\dim P_\lambda^w = r$. Take $\mu, \mu', \mu''$ as in Lemma \ref{typebcdhalfint}. We consider two further cases: ~\\
 \begin{enumerate}[label=(\roman*)]
 \item If $\dim P_\lambda^{s_{i_1}w}<r$, then in particular $\dim P_\lambda^{s_{i_1}w} = r-1$ (the dimension can only increase by one passing from $P_\lambda^{s_{i_1}w}$ to $P_\lambda^w$.) By Lemma $\ref{str}$, there is a unique index $i$ such that $i\not\in \op{supp}(\overline{s_{i_1} w})$, and for this index, 
 \setcounter{theorem}{\value{theorem}+1}
\begin{align}\label{eq'n-2-use}
\langle \lambda, x_i\rangle = \langle \mu_1, x_i\rangle
\end{align}
for all $\mu_1\in P_\lambda^{s_{i_1}w}$. Since $\dim P_\lambda^w = r$, $i\in \op{supp}(\bar w)$. But the change from $\op{supp}(\overline{s_{i_1}w})$ to $\op{supp}(\bar w)$ can only be in the index $i_1$. Therefore $i = i_1$. See Figure \ref{hey4}. ~\\
 \tdplotsetmaincoords{70}{115}

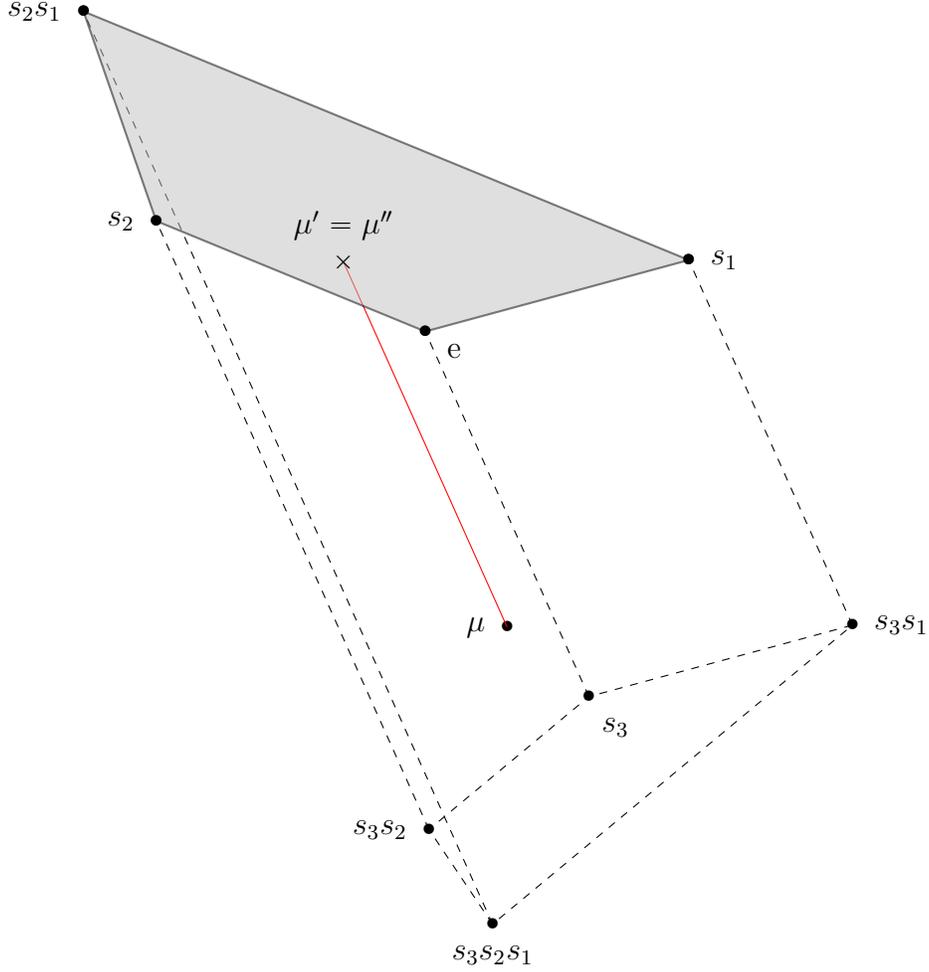
\begin{figure}
	\begin{center}
		\begin{tikzpicture}[tdplot_main_coords,scale=.4]
		\draw[dashed,black] (5,10,-6) -- (5,4,6);
		\draw[dashed,black] (-5,15,-6) -- (-5,9,6);
		\draw[dashed,black] (4,6,-15) -- (4,-9,15);
		\draw[dashed,black] (9,6,-10) -- (9,-4,10);

			\node at (a) {\small $\bullet$};
		\node at (a) [label={[label distance=0.01mm]-10:e}] {};
		\node at (b) {\small $\bullet$};
		\node at (b) [label={[label distance=0.01mm]0:$s_1$}] {};
		\node at (c) {\small $\bullet$};
		\node at (c) [label={[label distance=0.01mm]180:$s_2$}] {};
		\node at (d) {\small $\bullet$};
		\node at (d) [label={[label distance=0.01mm]180:$s_2s_1$}] {};
		\coordinate(e) at (5,10,-6);
		\node at (e) [label={[label distance=0.01mm]-80:$s_3$}] {};
		\coordinate(f) at (-5,15,-6);
		\node at (f) [label={[label distance=0.01mm]0:$s_3s_1$}] {};
		\coordinate(g) at (4,6,-15);
		\node at (g) [label={[label distance=0.01mm]-90:$s_3s_2s_1$}] {};
		\coordinate (h) at (9,6,-10);
		\node at (h) [label={[label distance=0.01mm]180:$s_3s_2$}] {};
		\coordinate (i) at (5,7,-4);
		\node at (i) {\small $\bullet$};
		\node at (i) [label={[label distance=0.01mm]180:$\mu$}] {};
		\coordinate (j) at (5,1,8);

		\draw[dashed,black] (g) --  (h) -- (e) -- (f) -- (g);
		\draw[red] (i) -- (j);

		\draw[thick,fill=lightgray,opacity=0.5] (a) -- (b) -- (d) -- (c) -- (a);

		\node at (a) {\small $\bullet$};
		\node at (b) {\small $\bullet$};
		\node at (c) {\small $\bullet$};
		\node at (d) {\small $\bullet$};
		\node at (e) {\small $\bullet$};
		\node at (f) {\small $\bullet$};
		\node at (g) {\small $\bullet$};
		\node at (h) {\small $\bullet$};

				\node at (j) {\small $\times$};
		\node at (j) [label={[label distance=0.01mm]90:$\mu'=\mu''$}] {};
		
		\end{tikzpicture}
		\caption{\label{hey4} \small In type $A_3$, the polytope $P_{5 \varpi_1+4\varpi_2+6\varpi_3}^{s_2s_1}$. The polytope for $P_{5 \varpi_1+4\varpi_2+6\varpi_3}^{s_3s_2s_1}$ is shown with dashed lines. An $\alpha_3$ root line is shown in red joining some weights in $P_{5 \varpi_1+4\varpi_2+6\varpi_3}^{s_3s_2s_1}$ but not in $P_{5 \varpi_1+4\varpi_2+6\varpi_3}^{s_2s_1}$. Here $\mu=5\varpi_1+7\varpi_2-4\varpi_3$ and $\mu'=\mu''=5\varpi_1+\varpi_2+8\varpi_3$.}
	\end{center}
\end{figure}

Examining (\ref{eq'n-2-use}) with $\mu_1 = \mu'$, we see that 
\begin{align*}
\langle \lambda,x_i\rangle &= \langle \mu, x_i\rangle + k' \langle \alpha_i, x_i\rangle \\
\langle \lambda - \mu, x_i\rangle &= k',
\end{align*}
so since $\lambda-\mu\in Q$, $k'$ is an integer and $\mu_{int}' = \mu'$ exists inside $\mu''.\mu'$ (moreover, $\mu'' = \mu'$). By inductive assumption, $V_\lambda^{s_{i_1}w}(\mu') \ne0$; hence $V_\lambda^w (\mu) \ne0$ by Proposition \ref{indsketch}. ~\\

 \item Otherwise, $\dim P_\lambda^{s_{i_1}w} = r$ as well. We distinguish two final possibilities: either $\mu' \neq \mu''$ or $\mu'=\mu''$. See Figure \ref{hey5}. 
  \tdplotsetmaincoords{70}{115}

\begin{figure}
	\begin{center}
		\begin{tikzpicture}[tdplot_main_coords,scale=.4]

	\node at (a) {\small $\bullet$};

	\node at (b) {\small $\bullet$};
	\node at (b) [label={[label distance=0.01mm]0:$s_1$}] {};
	\node at (c) {\small $\bullet$};
	\node at (c) [label={[label distance=0.01mm]180:$s_2$}] {};
	\node at (d) {\small $\bullet$};
	\node at (d) [label={[label distance=0.01mm]180:$s_2s_1$}] {};
	\coordinate(e) at (5,10,-6);
	
	\coordinate(f) at (-5,15,-6);
	\node at (f) [label={[label distance=0.01mm]0:$s_3s_1$}] {};
	\coordinate(g) at (4,6,-15);
	\node at (g) [label={[label distance=0.01mm]-90:$s_3s_2s_1$}] {};
	\coordinate (h) at (9,6,-10);

		\coordinate (l) at (4,10,-7);
		\coordinate (m) at (15,-10,4);
		\node at (m) {\small $\bullet$};
		\node at (m) [label={[label distance=0.01mm]180:$s_2s_3$}] {};
		\coordinate (n) at (15,-6,-4);
		\node at (n) {\small $\bullet$};
		\node at (n) [label={[label distance=0.01mm]180:$s_2s_3s_2$}] {};
		\coordinate (o) at (10,-6,-9);
		\node at (o) {\small $\bullet$};
		\node at (o) [label={[label distance=0.01mm]180:$s_2s_3s_2s_1$}] {};
		\coordinate (p) at (10,-15,9);
		\node at (p) {\small $\bullet$};
		\node at (p) [label={[label distance=0.01mm]180:$s_2s_3s_1$}] {};

		\draw[thick,fill=lightgray,opacity=0.5] (e) -- (f) -- (g) -- (h) -- (e);
		\draw[thick,fill=lightgray,opacity=0.2] (e) -- (f) -- (g) -- (h) -- (e);
		\draw[thick,fill=lightgray,opacity=0.8] (d) -- (g) -- (h) -- (c) -- (d);
		\draw[thick,fill=lightgray,opacity=0.4] (a) -- (b) -- (f) -- (e) -- (a);
		\draw[thick,fill=lightgray,opacity=0.3] (a) -- (e) -- (h) -- (c) -- (a);
		\draw[thick,fill=lightgray,opacity=0.5] (a) -- (b) -- (d) -- (c) -- (a);
		\draw[dashed] (d) -- (c) -- (m) -- (p) -- (d);
		\draw[dashed] (h) -- (g) -- (o) -- (n) -- (h);
		\draw[dashed] (m) -- (n);
		\draw[thick,fill=lightgray,opacity=0.3] (m) -- (p) -- (o) -- (n) -- (m);
		\coordinate (q) at (10,-2,-9);
		\node at (q) {\small $\bullet$};
		\node at (q) [label={[label distance=0.01mm]180:$\mu$}] {};
		\coordinate (r) at (6,6,-13);
		\node at (r) {\small $\times$};
		\node at (r) [label={[label distance=0.01mm]0:$\mu'$}] {};
		\coordinate (s) at (10,-6,-1);
		\node at (s) {\small $\bullet$};
		\node at (s) [label={[label distance=0.01mm]180:$\nu$}] {};
		\coordinate (t) at (6,2,-5);
		\node at (t) {\small $\times$};
		\node at (t) [label={[label distance=0.01mm]210:$\nu''$}] {};
		\coordinate (u) at (2,10,-9);
		\node at (u) {\small $\times$};
		\node at (u) [label={[label distance=0.01mm]30:$\nu'$}] {};
		\draw[red] (q) -- (r);
		\draw[red] (s) -- (t);
		\draw[red,dashed] (t) -- (u);
		
		\node at (e) [label={[label distance=0.01mm]80:$s_3$}] {};
		\node at (a) [label={[label distance=0.01mm]-10:e}] {};
		\node at (h) [label={[label distance=0.03mm]0:$s_3s_2$}] {};

		\node at (a) {\small $\bullet$};
		\node at (b) {\small $\bullet$}; 
		\node at (c) {\small $\bullet$};
		\node at (d) {\small $\bullet$};
		\node at (e) {\small $\bullet$};
		\node at (f) {\small $\bullet$};
		\node at (g) {\small $\bullet$};
		\node at (h) {\small $\bullet$};

		\end{tikzpicture}
		\caption{\label{hey5} \small In type $A_3$, the polytope $P_{5\varpi_1+4\varpi_2+6\varpi_3}^{s_3s_2s_1}$. The polytope for $P_{5\varpi_1+4\varpi_2+6\varpi_3}^{s_2s_3s_2s_1}$ is shown with the dashed edges. Some root lines in the $\alpha_2$ direction are shown in red, associated to weights $\mu$ in $P_{5\varpi_1+4\varpi_2+6\varpi_3}^{s_2s_3s_2s_1}$ but not in $P_{5\varpi_1+4\varpi_2+6\varpi_3}^{s_3s_2s_1}$. In particular, here $\mu=10\varpi_1-2\varpi_2-9\varpi_3$, and $\mu''=\mu'=6\varpi_1+6\varpi_2-13\varpi_3$. Since the dimension of both polytopes is 3, we observe that if $\mu''=\mu'$ then $\mu'$ and $\mu$ both lie on a face of $P_{5\varpi_1+4\varpi_2+6\varpi_3}^{s_2s_3s_2s_1}$. On the other hand, we have $\nu=10\varpi_1-6\varpi_2-\varpi_3$ in the interior of $P_{5\varpi_1+4\varpi_2+6\varpi_3}^{s_2s_3s_2s_1}$, and thus $\nu''=6\varpi_1+2\varpi_2-5\varpi_3 \neq \nu'=2\varpi_2+10\varpi_2-9\varpi_3$. }
	\end{center}
\end{figure}
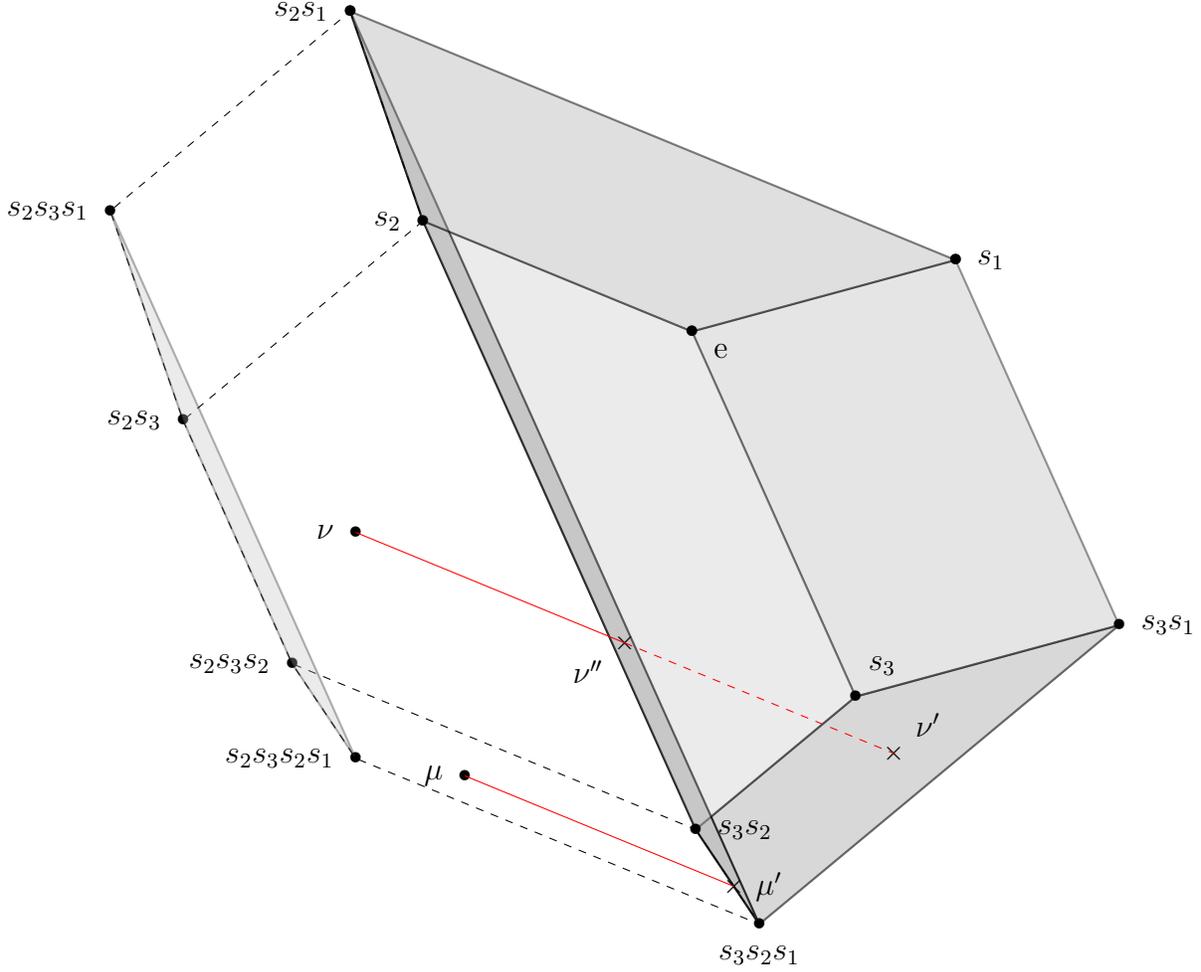
 The first case is handled by Corollary \ref{segmentcor} and Proposition \ref{indsketch}. 
 
	Now suppose $\mu'=\mu''$. Set
	$$
F(\mu') = \{(v,P_i): v\in W^{P_i}, \langle \lambda, (s_{i_1}w)^{-1}*v x_i\rangle = \langle \mu', v x_i\rangle\}, 
	$$
	which records the faces of $P_\lambda^{s_{i_1}w}$ that contain $\mu'$. 
	Since $\mu'+\epsilon \alpha_{i_1}\not\in P_\lambda^{s_{i_1}w}$ for any $\epsilon>0$, there exists some pair $(v,P_i)\in F(\mu')$ such that $\langle \alpha_{i_1},v x_i\rangle<0$. Likewise, by the minimality of $\mu''$, there is a pair $(u,P_j)\in F(\mu')$ such that $\langle \alpha_{i_1}, ux_j \rangle >0$. ~\\
	
	Let $a,b$ be positive constants such that $a\langle \alpha_{i_1}, vx_i\rangle + b \langle \alpha_{i_1},ux_j\rangle = 0$. Observe that, for every $\mu_1\in P_{\lambda}^{s_{i_1}w}$, the inequality 
	\setcounter{theorem}{\value{theorem}+1}
	\begin{align}\label{make-a-face}
	a\langle \lambda , (s_{i_1}w)^{-1}*v x_i\rangle+b \langle \lambda , (s_{i_1}w)^{-1}*u x_j\rangle \le \langle \mu_1, a vx_i+bux_j\rangle
	\end{align}
	holds. Moreover, it holds on $P_\lambda^w$: for if $\mu_2\in P_\lambda^w$, then one can find $t\in \Q$ such that $\mu_2+t \alpha_{i_1}\in P_\lambda^{s_{i_1}w}$. Then by construction
	$$
	\langle \mu_2, avx_i+bux_j\rangle = \langle \mu_2+t\alpha_{i_1}, avx_i+bux_j\rangle,
	$$
	so since $\mu_2+t\alpha_{i_1}$ satisfies (\ref{make-a-face}), so does $\mu_2$. 
	~\\
	
	Finally, since $\mu'$ satisfies (\ref{make-a-face}) with equality, so does $\mu$. Thus $\mu$ lies on a face $\mathcal{F}'$ of dimension $\leq r-1$ of the polytope $P_{\lambda}^w$. 
	Now we use Proposition \ref{facetDemazurestructure2} to assert that faces of Demazure modules have the structure of (conjugated) Demazure modules for Levi subgroups. So by the inductive assumption on rank, and Lemma \ref{simple-factors}, we see that $\mathcal{F}'$ is saturated, so $\mu$ is a nontrivial weight of $V_{\lambda}^w$. Thus saturation holds for $P_{\lambda}^w$. \qedhere	
 \end{enumerate}
 \end{enumerate}
\end{proof}

\begin{lemma}\label{simple-factors}
Suppose $\mathfrak{g} = \mathfrak{g}_1\oplus \mathfrak{g}_2$, with each $\mathfrak{g}_i$ a reductive Lie algebra with chosen Borel and maximal toral subalgebras $\mathfrak{h}_i\subseteq \mathfrak{b}_i$. Set $\mathfrak{b} = \mathfrak{b}_1\oplus \mathfrak{b}_2$ and $\mathfrak{h} = \mathfrak{h}_1\oplus \mathfrak{h}_2$. 

Each dominant weight $\lambda$ in $\mathfrak{h}^*$ is uniquely a sum of dominant weights $\lambda_1+\lambda_2$ coming from $\mathfrak{h}_i^*$. Each Weyl group element $w$ for $\mathfrak{g}$ can be decomposed uniquely as $w_1w_2 = w_2w_1$ where $w_i$ is a Weyl group element for $\mathfrak{g}_i$. 

Demazure modules of $\mathfrak{g}, \mathfrak{g}_1,\mathfrak{g}_2$ are related as follows:
\begin{enumerate}
\item For each $\lambda = \lambda_1+\lambda_2$ and $w = w_1w_2$, we have an isomorphism of $\mathfrak{b}$-modules
$$
V_\lambda^w = V_{\lambda_1}^{w_1}\otimes V_{\lambda_2}^{w_2}.
$$
\item Therefore $\op{char}(V_\lambda^w) = \op{char}(V_{\lambda_1}^{w_1})\cdot \op{char}(V_{\lambda_2}^{w_2})$, 
\item $P_\lambda^w = P_{\lambda_1}^{w_1}\times P_{\lambda_2}^{w_2}$, and 
\item $\op{char}(V_\lambda^w)$ is saturated if and only if $\op{char}(V_{\lambda_1}^{w_1})$ and $\op{char}(V_{\lambda_2}^{w_2})$ are saturated. 
\end{enumerate}
\end{lemma}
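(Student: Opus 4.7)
The plan is to prove (1) directly from the standard tensor-product decomposition of irreducible modules over a direct sum of reductive Lie algebras; then (2), (3), and (4) all follow by unwinding the tensor product and the additive structure of weights, the root lattice, and the dual space.

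For (1), the key input is the classical fact that the irreducible $\mathfrak{g}$-module of highest weight $\lambda = \lambda_1 + \lambda_2$ decomposes as $V_\lambda \cong V_{\lambda_1}\otimes V_{\lambda_2}$, with highest weight vector $v_\lambda = v_{\lambda_1}\otimes v_{\lambda_2}$ and with $\mathfrak{g}_i$ acting only on the $i$-th tensor factor. Lift $w_1, w_2$ to elements in the corresponding connected groups $G_1, G_2$; since these commute inside $G_1\times G_2$, a lift of $w = w_1w_2$ sends $v_\lambda$ to $\dot w_1 v_{\lambda_1}\otimes \dot w_2 v_{\lambda_2}$. Using the PBW isomorphism $U(\mathfrak{b}) \cong U(\mathfrak{b}_1)\otimes U(\mathfrak{b}_2)$ together with the fact that $U(\mathfrak{b}_i)$ acts trivially on the tensor factor not labeled by $i$, one obtains
$$
V_\lambda^w = U(\mathfrak{b}).(\dot w_1 v_{\lambda_1}\otimes \dot w_2 v_{\lambda_2}) = \bigl(U(\mathfrak{b}_1).\dot w_1 v_{\lambda_1}\bigr)\otimes \bigl(U(\mathfrak{b}_2).\dot w_2 v_{\lambda_2}\bigr) = V_{\lambda_1}^{w_1}\otimes V_{\lambda_2}^{w_2}.
$$

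From (1), assertion (2) is immediate because $T = T_1\times T_2$, so characters of tensor products factor; each weight $\mu$ uniquely writes as $\mu_1+\mu_2$ with $\mu_i\in X^*(T_i)$ and $e^\mu = e^{\mu_1}e^{\mu_2}$. Assertion (3) follows by noting that the support of the product character is precisely $\{\mu_1+\mu_2 : V_{\lambda_i}^{w_i}(\mu_i)\ne 0\}$; since $X^*(T)_\Q = X^*(T_1)_\Q\oplus X^*(T_2)_\Q$, taking convex hulls commutes with this set-theoretic product, giving the Cartesian product of polytopes. For (4), observe that the root lattice splits as $Q = Q_1\oplus Q_2$, so the condition $\lambda-\mu\in Q$ is equivalent to the pair of conditions $\lambda_i-\mu_i\in Q_i$; combined with (3), a lattice point $\mu\in P_\lambda^w$ is precisely a pair $(\mu_1,\mu_2)$ with $\mu_i\in P_{\lambda_i}^{w_i}\cap (\lambda_i+Q_i)$, and by (1) the weight space $V_\lambda^w(\mu)$ is nonzero exactly when both $V_{\lambda_i}^{w_i}(\mu_i)$ are. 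Thus the saturation property passes through the product in both directions.

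The proof is essentially bookkeeping, with no substantive obstacle; the one point requiring mild care is the lifting of $w = w_1w_2$ to a compatible product of lifts in $G_1\times G_2$, which is harmless because $G_1$ and $G_2$ commute. Since $V_\lambda^w$ depends only on the line spanned by $\dot w v_\lambda$, any consistent choice of lifts yields the same Demazure module, so no scaling ambiguity affects the argument.
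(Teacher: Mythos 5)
Your proof is correct, and it is the standard argument the authors evidently had in mind: the paper states Lemma~\ref{simple-factors} without proof, explicitly calling it ``straightforward,'' and your route through $V_\lambda\cong V_{\lambda_1}\otimes V_{\lambda_2}$, the decomposition $U(\mathfrak{b})\cong U(\mathfrak{b}_1)\otimes U(\mathfrak{b}_2)$, and the resulting identity $U(\mathfrak{b}).(a\otimes b)=(U(\mathfrak{b}_1).a)\otimes(U(\mathfrak{b}_2).b)$ is exactly the expected one. The only point worth flagging is that in part (4) you should note that, because $X^*(T)=X^*(T_1)\oplus X^*(T_2)$, the decomposition $\mu=\mu_1+\mu_2$ is unique, so the $\mu$-weight space of the tensor product is the single term $V_{\lambda_1}^{w_1}(\mu_1)\otimes V_{\lambda_2}^{w_2}(\mu_2)$ rather than a sum over decompositions; this is what makes ``$V_\lambda^w(\mu)\ne 0$ iff both factors are nonzero'' hold, and your write-up implicitly uses it but does not quite say it.
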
	

\section{Exceptional Types and Calculations}\label{exceptionals}

In this section, we prove Theorem \ref{lattice} for types $F_4$ and $G_2$ using methods quite different from the preceding sections. Because of the low ranks, we are able to accomplish this by computer-assisted calculations. For type $E$, we conjecture that the saturation property will also hold. 

\subsection{Computational approach in the exceptional types}

Let $G$ be of any simple Lie type. Fix $w\in W$. Consider the cone 
$$
\mathcal{C}_w = \{(\lambda,\mu)\in (X^*(T)_\Q)^2 : \lambda \text{ is a dominant rational weight and }\mu \in P_\lambda^w\}\subseteq (X^*(T)_\Q)^2.
$$
Then $\mathcal{C}_w$ is a rational polyhedral cone inside $(X^*(T)_\Q)^2$. Its extremal rays are generated by pairs $(\omega_i, v\omega_i)$, such that $\omega_i$ is a fundamental weight and $v\le w$. Its inequalities are those of Theorem \ref{necsufineqs}. The statement of Theorem \ref{lattice} can be rephrased as follows:

\begin{theorem}\label{case-by-case}
Suppose that $G$ is of type $A_r, B_r, C_r, D_r, F_4$, or $G_2$. Then for dominant integral $\lambda$, $V_\lambda^w(\mu)\ne 0$ if and only if $\lambda-\mu\in Q$ and $(\lambda,\mu)\in \mathcal{C}_w$. 
\end{theorem}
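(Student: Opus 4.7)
The plan is to handle the classical and exceptional types separately. The forward direction is immediate: if $V_\lambda^w(\mu) \neq 0$, then $\mu$ is a weight of $V_\lambda$ (forcing $\lambda - \mu \in Q$) and $\mu \in \op{wt}(V_\lambda^w) \subseteq P_\lambda^w$ by definition, so $(\lambda,\mu) \in \mathcal{C}_w$. For the converse when $G$ is of type $A_r, B_r, C_r$, or $D_r$, the statement is precisely the saturation of $\op{char}(V_\lambda^w)$ already established in Theorems \ref{A-result} and \ref{BCD-result}, so no new argument is needed.

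The substantive case is $F_4$ and $G_2$, and I would reduce this to a finite check. Form the affine semigroup
\[
\Gamma_w := \{(\lambda,\mu) \in \mathcal{C}_w \cap (X^*(T) \times X^*(T)) : \lambda - \mu \in Q\},
\]
which lies inside a pointed rational polyhedral cone and hence admits a unique minimal Hilbert basis $H_w$. The key reduction is Proposition \ref{PropP}, which relates saturation of $\op{char}(V_\lambda^w)$ to the realizability of the generators of $\Gamma_w$ as honest weights of Demazure modules; it implies that to prove the theorem it suffices to verify $V_\lambda^w(\mu) \neq 0$ for every $w \in W$ and every $(\lambda,\mu) \in H_w$.

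With this reduction in hand, the argument becomes a finite computation. For each $w$ in $W(F_4)$ or $W(G_2)$, the recipe is: (i) use Theorem \ref{necsufineqs} to list the inequalities cutting out $\mathcal{C}_w$, which requires computing the Demazure products $w^{-1} * v$ and the pairings $\langle \lambda, u x_i\rangle$ for $u = \overline{w^{-1} * v}$, over $v \in W^{P_i}$ and each maximal parabolic $P_i$; (ii) run a standard Hilbert basis algorithm (e.g., Normaliz or 4ti2) on $\Gamma_w$; and (iii) for each $(\lambda,\mu) \in H_w$, confirm $V_\lambda^w(\mu) \neq 0$ by directly checking that the coefficient of $e^\mu$ in $D_{i_1} \cdots D_{i_k}(e^\lambda)$ is positive, via the Demazure character formula of Theorem \ref{dem-char}. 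I expect the main obstacle to be purely one of computational scale: $G_2$ is almost trivial, but $|W(F_4)| = 1152$ and the Hilbert basis for a given $w$ can be sizeable, so careful implementation is needed to keep the enumeration tractable. The strategy is uniform in the rank, so in principle the same approach would settle $E_6, E_7, E_8$ given adequate computing resources, which is the source of the conjecture for those types.
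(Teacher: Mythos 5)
Your proposal is correct and follows essentially the paper's route: the forward implication is immediate, the classical types are handled by Theorems \ref{A-result} and \ref{BCD-result}, and types $F_4$ and $G_2$ reduce to a finite Hilbert basis verification via \texttt{Normaliz} and the Demazure character formula. One small citation note: the tool that directly reduces the theorem to checking Hilbert basis generators is Lemma \ref{additive} (nonvanishing of $V_\lambda^w(\mu)$ is closed under addition of pairs $(\lambda,\mu)$, by tensoring $T$-invariant sections), whereas Proposition \ref{PropP} is an equivalence whose forward direction itself invokes Lemma \ref{additive} and additionally requires the computed basis to consist of fundamental-weight pairs---a fact your step (iii) alone does not establish, so either cite Lemma \ref{additive} directly or also record that the output of the Hilbert basis computation has the special shape demanded by property $\mathcal{P}$.
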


This is a statement about every lattice point inside $\mathcal{C}_w$, assuming we take the ambient lattice to be 
$$
\Lambda = \{(\lambda,\mu)\in (X^*(T))^2: \lambda-\mu\in Q\}.
$$
Let $\mathcal{S}_w = \mathcal{C}_w\cap \Lambda$. Then $\mathcal{S}_w$ is the semigroup of lattice points inside a rational polyhedral cone. By the general theory \cite{Schrijv}*{Theorem 16.4}, $\mathcal{S}_w$ has a finite minimal generating set, normally called its \emph{Hilbert basis}. 

\begin{lemma} \label{additive}
Suppose that for every $(\lambda,\mu)$ in the Hilbert basis of $\mathcal{S}_w$, $V_\lambda^w(\mu)\ne 0$. Then for every $(\lambda,\mu)\in \mathcal{S}_w$, $V_\lambda^w(\mu)\ne 0$. 
\end{lemma}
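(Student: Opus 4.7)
The plan is to show that the subset $\mathcal{T}_w := \{(\lambda,\mu) \in \mathcal{S}_w : V_\lambda^w(\mu) \ne 0\}$ is additively closed inside $\mathcal{S}_w$. Combined with the hypothesis that every Hilbert basis element of $\mathcal{S}_w$ lies in $\mathcal{T}_w$, together with the fact that every lattice point in $\mathcal{S}_w$ is a nonnegative integer combination of Hilbert basis elements, this immediately yields $\mathcal{T}_w = \mathcal{S}_w$ by a straightforward induction on the sum of Hilbert coefficients expressing a given $(\lambda,\mu)$.

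To establish additivity, I would pass to global sections via the Borel--Weil identification $V_\lambda^w \cong H^0(X_w, L_w(\lambda))^*$ recalled in Section \ref{DemMod}. Tracking weight spaces, this gives
$$V_\lambda^w(\mu) \ne 0 \iff H^0(X_w, \C_\mu \otimes L_w(\lambda))^T \ne 0,$$
since a $T$-invariant section of $\C_\mu \otimes L_w(\lambda)$ is precisely a weight-$(-\mu)$ section of $L_w(\lambda)$, which pairs nontrivially with the weight-$\mu$ subspace of $V_\lambda^w$. (This is the $n=1$ refinement of Lemma \ref{borel-weil}, which the same argument yields directly.) Given $(\lambda_i,\mu_i) \in \mathcal{T}_w$ for $i = 1, 2$, pick nonzero $T$-invariant sections $\sigma_i \in H^0(X_w, \C_{\mu_i} \otimes L_w(\lambda_i))^T$; then the product $\sigma_1 \cdot \sigma_2$ is a $T$-invariant section of $\C_{\mu_1+\mu_2} \otimes L_w(\lambda_1+\lambda_2)$, and its nonvanishing would witness $V_{\lambda_1+\lambda_2}^w(\mu_1+\mu_2) \ne 0$, giving the desired additivity.

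The main (and essentially only) obstacle is to verify that $\sigma_1 \cdot \sigma_2 \ne 0$. This reduces to showing that the natural multiplication map
$$H^0(X_w, \mathbb{L}_1) \otimes H^0(X_w, \mathbb{L}_2) \to H^0(X_w, \mathbb{L}_1 \otimes \mathbb{L}_2)$$
sends nonzero simple tensors to nonzero sections, which follows from $X_w$ being integral, i.e., irreducible and reduced. This is a classical property of Schubert varieties --- indeed, they are known to be normal and Cohen--Macaulay. Once this input is granted, the additive closure of $\mathcal{T}_w$, and hence the lemma, follows at once.
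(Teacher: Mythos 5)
Your proposal is correct and follows essentially the same argument as the paper: reduce to showing that the set of pairs $(\lambda,\mu)$ with $V_\lambda^w(\mu)\neq0$ is closed under addition, translate via Borel--Weil to $T$-invariant global sections on $X_w$, multiply sections, and use integrality of the Schubert variety to conclude the product is nonzero. The only cosmetic difference is that you spell out that nonvanishing of the product requires $X_w$ to be integral (not merely irreducible), which is a fair point of extra care but is standard for Schubert varieties.
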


\begin{proof}
It suffices to show that the property $V_\lambda^w(\mu)\ne 0$ is closed under addition. Suppose that $V_{\lambda_1}^w(\mu_1)\ne 0$ and $V_{\lambda_2}^w(\mu_2)\ne 0$; we claim that $V_{\lambda_1+\lambda_2}^w(\mu_1+\mu_2)\ne0$. 

Recalling the discussion of the Borel-Weil theorem prior to Lemma \ref{borel-weil}, for each $i=1,2$ we have $H^0(X_w, \C_{\mu_i}\otimes L_w(\lambda_i))^T\ne0$. Let $\sigma_i$ be a nonzero $T$-invariant global section in $H^0(X_w, \C_{\mu_i}\otimes L_w(\lambda_i))$. Then $\sigma_1\otimes \sigma_2$ is nonzero (since $X_w$ is irreducible) and belongs to
$$
H^0(X_w, \C_{\mu_1+\mu_2}\otimes L_w(\lambda_1+\lambda_2))^T.
$$
Therefore $V_{\lambda_1+\lambda_2}^w(\mu_1+\mu_2)\ne0$. 
\end{proof}

So in order to verify Theorem \ref{case-by-case} for any particular group $G$ and Weyl group element $w\in W$, one can in principle follow this algorithm: 
\begin{enumerate}
\item Find the Hilbert basis $\{(\lambda_1,\mu_1), \hdots, (\lambda_t,\mu_t)\}$ of the semigroup $\mathcal{S}_w$.
\item Verify that $V_{\lambda_i}^w(\mu_i)\ne 0$ for each $i=1,\hdots,t$. 
\end{enumerate}

As long as the weights $\lambda_i$ are not too large, step (2) can be done with Demazure's character formula. Specialized computer programs such as {\tt LiE} \cite{lie} are useful here. 

The more challenging step seems to be (1), but actually this is a standard problem in linear programming: to start from a rational, polyhedral cone $\mathcal{C}$ with known inequalities and produce the Hilbert basis of $\mathcal{C}\cap \Lambda$. Several computer programs are available, such as {\tt Normaliz} \cite{nmz}. 

Before describing some of the computational results below, we remark that, based on computer-generated examples in various types, the Hilbert basis of $\mathcal{S}_w$ seems to consist entirely of pairs $(\lambda, \mu)$ such that $\lambda$ is a fundamental weight.  Let us call this property $\mathcal{P}$. 
$$
\mathcal{P}:~~\text{ the Hilbert basis of $\mathcal{S}_w$ is exactly $\{(\omega_j,\mu): 1\le j\le n, V_{\omega_j}^w(\mu)\ne 0\}$} 
$$
Clearly the Hilbert basis must contain the above set, so property $\mathcal{P}$ is asserting that the Hilbert basis of $\mathcal{S}_w$ is as small as possible. In fact, for any $G$ and $w\in W$, the validity of property $\mathcal{P}$ is equivalent to the validity of Theorem \ref{case-by-case} or equivalently Theorem \ref{lattice} for $w$.

\begin{proposition} \label{PropP} Let $w \in W$. Then property $\mathcal{P}$ holds for $w$ if and only if, for any $(\lambda, \mu) \in \mathcal{S}_w$, $V_\lambda^w(\mu) \neq 0$. 
\end{proposition}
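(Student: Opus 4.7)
The plan is to prove both implications separately.

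For the forward direction ($\mathcal{P} \Rightarrow$ saturation), given $(\lambda,\mu) \in \mathcal{S}_w$, property $\mathcal{P}$ lets me write $(\lambda,\mu) = \sum_{i=1}^k (\omega_{j_i}, \mu_i)$ with each $V_{\omega_{j_i}}^w(\mu_i)\ne 0$. Iterated applications of Lemma \ref{additive} then yield $V_\lambda^w(\mu) \ne 0$.

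For the reverse direction, I assume saturation holds. The inclusion $\{(\omega_j, \mu) : V_{\omega_j}^w(\mu) \ne 0\} \subseteq$ Hilbert basis is already noted in the excerpt: any $\mathcal{S}_w$-decomposition $(\omega_j, \mu) = (\lambda_1, \mu_1) + (\lambda_2, \mu_2)$ forces $\lambda_1 + \lambda_2 = \omega_j$ with both $\lambda_i$ dominant, hence some $\lambda_i = 0$, and then the corresponding $\mu_i = 0$ because $P_0^w = \{0\}$. The task is to show the reverse inclusion, namely that every $(\lambda, \mu) \in \mathcal{S}_w$ lies in the subsemigroup generated by the fundamental-weight pairs. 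I will carry this out by induction on $|\lambda| := \sum_j n_j$, where $\lambda = \sum_j n_j \omega_j$.

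The inductive step is the crux. Given $|\lambda| \ge 2$, choose $j$ with $n_j \ge 1$ and write $\lambda = \omega_j + \lambda'$ with $\lambda'$ dominant and nonzero. The key representation-theoretic input is the ``Demazure-level'' Cartan inclusion
$$
V_\lambda^w \;\hookrightarrow\; V_{\omega_j}^w \otimes V_{\lambda'}^w,
$$
obtained by restricting the standard Cartan inclusion $V_\lambda \hookrightarrow V_{\omega_j} \otimes V_{\lambda'}$ (sending $v_\lambda \mapsto v_{\omega_j} \otimes v_{\lambda'}$). This restriction is well-defined because, by $G$-equivariance, the cyclic $B$-generator $v_{w\lambda}$ of $V_\lambda^w$ is sent, up to a nonzero scalar, to $v_{w\omega_j} \otimes v_{w\lambda'}$, which already lies inside the $B$-submodule $V_{\omega_j}^w \otimes V_{\lambda'}^w$ of $V_{\omega_j} \otimes V_{\lambda'}$. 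By saturation, $V_\lambda^w(\mu) \ne 0$, so passing to $T$-weight $\mu$ components forces the existence of a decomposition $\mu = \mu_1 + \mu_2$ with both $V_{\omega_j}^w(\mu_1) \ne 0$ and $V_{\lambda'}^w(\mu_2) \ne 0$. The lattice conditions $\mu_1 \in \omega_j + Q$ and $\mu_2 \in \lambda' + Q$ are automatic since weights of $V_{\omega_j}^w$ (resp.\ $V_{\lambda'}^w$) lie in $\omega_j - Q^+$ (resp.\ $\lambda' - Q^+$). Thus $(\omega_j, \mu_1)$ is a fundamental-weight generator in $\mathcal{S}_w$, while $(\lambda', \mu_2) \in \mathcal{S}_w$ decomposes further by the inductive hypothesis, completing the step.

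The main obstacle is the Demazure-level Cartan inclusion above. Once one unwinds the definition of $V_\lambda^w$ as a cyclic $B$-module and combines it with $G$-equivariance of the Cartan inclusion, the step is quite short, but it is the only genuinely representation-theoretic ingredient in an otherwise semigroup-theoretic argument, so packaging it cleanly is the main technical point.
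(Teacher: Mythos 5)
Your proof is correct and follows essentially the same approach as the paper: the forward direction uses additivity (Lemma~\ref{additive}), and the reverse direction uses the Cartan embedding of $V_\lambda^w$ into a tensor product of fundamental-weight Demazure modules to extract a decomposition of $(\lambda,\mu)$ into fundamental-weight pairs. The only (cosmetic) difference is that you organize the reverse direction as an induction on $|\lambda|$, peeling off one fundamental weight at a time via $V_\lambda^w \hookrightarrow V_{\omega_j}^w\otimes V_{\lambda'}^w$ and reading off a nonzero summand in the weight-$\mu$ component, whereas the paper embeds $V_\lambda^w$ into the full tensor product $\bigotimes_{i\in I} V_{\omega_i}^w$ in a single step and selects a nonzero pure-tensor summand there.
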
 

\begin{proof} 
($\Rightarrow$) Suppose property $\mathcal{P}$ holds, and let $(\lambda, \mu) \in \mathcal{S}_w$. Since each Hilbert basis element is of the form $(\omega_i, \nu)$ with $V_{\omega_i}^w(\nu) \neq 0$, and $(\lambda, \mu)$ can be expressed as a positive integer sum of such pairs, by the additivity Lemma \ref{additive}, we immediately conclude that $V_\lambda^w(\mu) \neq 0$.

($\Leftarrow$) Conversely, suppose that for any $(\lambda, \mu) \in \mathcal{S}_w$ we have that $V_\lambda^w(\mu) \neq 0$. Write $\lambda=\sum_{i \in I} \omega_i$ as a sum of fundamental weights over a set of (not necessarily distinct) indices $I$. Then there is a $\mathfrak{g}$-module inclusion 
$$
V(\lambda) \hookrightarrow \bigotimes_{i \in I} V(\omega_i),
$$
sending the highest weight vector $v_\lambda \in V_\lambda$ to $\bigotimes_{i \in I} v_{\omega_i}$. The extremal weight vector $v_{w\lambda}$ can be expressed as $\bigotimes_{i \in I} v_{w \omega_i}$ for appropriate extremal weight vectors $v_{w \omega_i} \in V_{\omega_i}(w \omega_i)$ for each $i \in I$. Since the inclusion is a $\mathfrak{g}$-module map, we can get a realization of the Demazure module $V_\lambda^w$ via $U(\mathfrak{b}). \left(\bigotimes_{i \in I} v_{w \omega_i} \right)$.

Now, since $V_\lambda^w(\mu) \neq 0$, we can find a nonzero vector of weight $\mu$ of the form $E.v_{w\lambda}$ for some $E \in U(\mathfrak{b})$. Consider the image 
$$
E.\left(\bigotimes_{i \in I} v_{w \omega_i} \right)
$$
under the inclusion. This, in general, will be a sum of many tensor factors. But, since we know that it is nonzero, select any nonzero summand, which will be of the form 
$$
\bigotimes_{i \in I} E_i. v_{w \omega_i},
$$
with each $E_i \in U(\mathfrak{b})$. This vector still has total weight $\mu$, and each $E_i.v_{w\omega_i} \in V_{\omega_i}^w(\nu_i)$ for some nontrivial weight $\nu_i$ of the fundamental Demazure module $V_{\omega_i}^w$. Thus, we can decompose 
$$
(\lambda, \mu) = \sum_{i \in I} (\omega_i, \nu_i);
$$
since such pairs $(\omega_i, \nu_i)$ are necessarily in the Hilbert basis and by the above argument suffice to express any $(\lambda, \mu) \in S_w$ with $V_\lambda^w(\mu) \neq 0$, these pairs precisely form the Hilbert basis of $\mathcal{S}_w$. Thus, property $\mathcal{P}$ holds. 
\end{proof}

\subsection{Type $F_4$}

The Weyl group of type $F_4$ has order $|W| = 1152$.  Using {\tt Normaliz}, we found the Hilbert basis $\mathcal{H}_w$ of each one of the 1152 semigroups $\mathcal{S}_w$. In every instance, $\mathcal{H}_w$ has property $\mathcal{P}$. 
Then, using {\tt LiE}, for each $w\in W$ we found that $(\lambda,\mu)\in \mathcal{H}_w\implies V_\lambda^w(\mu)\ne0$. This confirms Theorem \ref{case-by-case} for type $F_4$. 

Here is a sample of the data we produced. We follow the conventions of Bourbaki \cite{Bour}. For instance, with $w = s_1s_2s_3s_4$, both the Hilbert basis and set of (primitive elements on) extremal rays coincide with the following set of $14$ vectors. 
$$
\begin{array}{|c|c|}\hline
 (\lambda,\mu) & (\lambda,\mu)  \\ \hline
 (\omega_4,-\omega_1+\omega_3) & (\omega_4,\omega_4) \\ 
 (\omega_4,\omega_3-\omega_4) & (\omega_4,\omega_2-\omega_3) \\ 
 (\omega_4, \omega_1 - \omega_2 + \omega_3) & (\omega_3, -\omega_1+\omega_3+\omega_4) \\
 (\omega_3,\omega_3) & (\omega_3, \omega_2 - \omega_3+\omega_4) \\ 
 (\omega_3, \omega_1 - \omega_2+\omega_3+\omega_4 ) & ( \omega_2, -\omega_1+2\omega_3) \\
 (\omega_2,\omega_2) & (\omega_2, \omega_1 - \omega_2+2\omega_3)\\
 (\omega_1,-\omega_1+\omega_2) & (\omega_1,\omega_1) \\\hline
 \end{array}
$$

\subsection{Type $G_2$}

Once again we follow the conventions of Bourbaki \cite{Bour}. The Weyl group of type $G_2$ has order $|W| = 12$. Using {\tt Normaliz}, we found the Hilbert basis $\mathcal{H}_w$ for each of the 12 semigroups $\mathcal{S}_w$. Each $\mathcal{H}_w$ has property $\mathcal{P}$. Since space allows, we list the numbers of Hilbert basis elements and extremal rays for each $\mathcal{S}_w$. 

$$
\begin{array}{|l|c|c|}\hline
w & \text{\# Hilbert basis elements} &\text{\# Extremal rays} \\\hline
e & 2 &  2 \\
s_1 & 3 & 3 \\ 
s_2 & 3 & 3  \\
s_1s_2 & 7 & 5 \\
s_2s_1 & 5 & 5 \\
s_1s_2s_1 & 10 & 7 \\
s_2s_1s_2 & 11 & 7 \\
s_1s_2s_1s_2 & 17 & 9 \\
s_2s_1s_2s_1 & 14 & 9 \\
s_1s_2s_1s_2s_1 & 19 & 11 \\
s_2s_1s_2s_1s_2 & 19 & 11 \\
w_0 & 20 & 12 \\\hline
\end{array}
$$
~\\
Using {\tt LiE}, for each $w\in W$ we found that $(\lambda,\mu)\in \mathcal{H}_w\implies V_\lambda^w(\mu)\ne0$. This confirms Theorem \ref{case-by-case} for type $G_2$.

\subsection{Types $E_6$, $E_7$ and $E_8$}

Since $E_6$ and $E_7$ appear as Levi subtypes to $E_8$, the extension of Theorem \ref{case-by-case} to type $E$ could in theory be established by applying the above algorithm to type $E_8$. However, this is a formidable task due to 
$$
|W_{E_8}| = 696729600.
$$
Even for type $E_6$ we have $|W_{E_6}|=51840$, a significant undertaking. 
For the time being, our methods do not cover the case of type $E$, and we leave it as a conjecture. 
\begin{conjecture}
If $G$ is simple of type $E$, then for any $w\in W$ and dominant $\lambda$, then $\op{char}(V_\lambda^w)$ is saturated. 
\end{conjecture}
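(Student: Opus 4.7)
The plan is to extend, in two complementary strands, the methods already developed for the other simple types. The first strand is directly computational, mirroring the treatment of $F_4$ and $G_2$ in Section \ref{exceptionals}: for each $w\in W$, use \texttt{Normaliz} (or a comparable polyhedral package) to compute the Hilbert basis $\mathcal{H}_w$ of the semigroup $\mathcal{S}_w$, then use \texttt{LiE} to verify property $\mathcal{P}$ of Proposition \ref{PropP} by checking nonvanishing of $V_{\omega_j}^w(\mu)$ for each Hilbert basis element $(\omega_j,\mu)$. Since all first coordinates are fundamental weights, the individual character computations are lightweight; the bottleneck is the size of the Weyl group. For $E_6$ with $|W|=51840$ this is considerably heavier than the $F_4$ case but still plausibly feasible given adequate resources. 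For $E_7$ and especially $E_8$ it is infeasible without further reduction.

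The second strand is structural, aiming to reduce $E_6$ and $E_7$ to $E_8$. The key input is Corollary \ref{facetDemazurestructure1} together with Proposition \ref{facetDemazurestructure2}: faces of Demazure polytopes are themselves Demazure polytopes for Levi subgroups, and the corresponding module inclusions preserve the saturation property. Since $E_6$ and $E_7$ arise as Levi subtypes of $E_8$ (paired with abelian factors), one would attempt to realize every Demazure polytope $P_{L,\mu}^y$ for $L$ of type $E_6$ or $E_7$ as the face $\mathcal{F}(v,P_i)$ of some suitably chosen $P_{\lambda}^w$ in $E_8$, via the recipe $y = (w^{-1}*v)^{-1} u$ of Section \ref{faces}. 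Combined with the analog of Lemma \ref{simple-factors} for the abelian-plus-simple Levi decomposition, this would let a proof of Theorem \ref{lattice} for $E_8$ alone imply the conjecture in all type-$E$ cases.

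A third, purely combinatorial strand would try to extend the inductive argument of Sections \ref{sectiontypeA}--\ref{sectiontypesbcd} directly. The inductive step, isolated in Proposition \ref{indsketch}, demands a lattice point on the $\alpha_{i_1}$-segment $\mu''.\mu'$. In types $B_r,C_r,D_r$ this was extracted from Lemma \ref{typebcdhalfint}, which used the uniform bound $|\langle v^{-1}\alpha_{i_1}, x_i\rangle|\le 2$. In type $E_r$ the analogous pairings are bounded only by the maximal coefficient of a simple root in an arbitrary root: $3$ for $E_6$, $4$ for $E_7$, and $6$ for $E_8$. Thus the endpoints $\mu',\mu''$ lie a priori only in $\lambda+\tfrac{1}{6}Q$, and a segment short enough to contain no element of $\lambda+Q$ is entirely possible. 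To salvage this approach one would need a refined analysis of exactly which denominators occur on which faces of $P_\lambda^w$ (using Theorem \ref{necsufineqs}), perhaps ruling out the pathological short-segment configurations by exploiting incidences between multiple facets through $\mu'$, or else replacing the endpoint analysis with an explicit construction of a weight vector of weight $\mu$ in $V_\lambda^w$ via Littelmann's path model or Kashiwara's Demazure crystals.

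The main obstacle is precisely this breakdown of the half-integrality argument. Of the three strands, the structural reduction seems the most promising, as it concentrates the difficulty into a single case ($E_8$), but it still requires a proof of Theorem \ref{lattice} for $E_8$, where neither direct computation nor the segment-based induction immediately applies. A decisive resolution will most likely demand either a substantially optimized computer search, or a genuinely new representation-theoretic or crystal-theoretic ingredient that produces explicit weight vectors without passing through the line-segment reduction at all.
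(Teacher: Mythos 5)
This statement is stated in the paper as a conjecture, not a theorem: the authors offer no proof, and you have not produced one either. Your proposal is, in effect, a survey of why the conjecture remains open, and you are candid about the fact that none of the three strands closes the gap. As a survey it is accurate and largely tracks the paper's own remarks. In particular, your third strand correctly identifies the essential obstruction: the half-integrality argument of Lemma \ref{typebcdhalfint}, which produces a lattice point on the segment $\mu''.\mu'$ in types $B_r,C_r,D_r$, fails in type $E$ because $\langle v^{-1}\alpha_{i_1}, x_i\rangle$ can take values up to $6$, so the endpoints a priori lie only in $\lambda+\tfrac{1}{6}Q$. This is precisely the content of the Remark following Corollary \ref{segmentcor}, where the authors note that in $E_8$ one can a priori have $\mu'\ne \mu''$ with $\mu''.\mu'\cap(\lambda+Q)=\varnothing$.

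A couple of refinements to your structural strand are worth flagging. First, the reduction ``realize $E_6$ or $E_7$ Demazure polytopes as faces of $E_8$ ones'' runs the induction in the wrong direction: the useful induction of Sections \ref{sectiontypeA}--\ref{sectiontypesbcd} proceeds \emph{upward} in rank, with faces of $P_\lambda^w$ handled via strictly smaller Levi subgroups (Corollary \ref{facetDemazurestructure1}, Proposition \ref{facetDemazurestructure2}, Lemma \ref{simple-factors}). For type $E$ the natural order of attack is therefore $E_6$ first (whose proper Levi subtypes are all of types $A$ and $D$, already covered), then $E_7$ using $E_6$, then $E_8$ using $E_7$; you do not need $E_8$ to get $E_6$ or $E_7$. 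However, this does not rescue the argument, because the hard step is not case (a) or (b)(ii)-with-$\mu'=\mu''$ of the proof of Theorem \ref{BCD-result} (those reduce to lower rank), but case (b)(ii)-with-$\mu'\ne\mu''$, which relied on half-integrality and does not reduce in rank. Second, your first strand is indeed the paper's actual method for $F_4$ and $G_2$, and Proposition \ref{PropP} guarantees it is logically sufficient if completed; for $E_6$ with $|W|=51840$ it is plausibly within reach of a determined computation, but nothing in the paper or your proposal establishes feasibility for $E_7$ or $E_8$. In short, your assessment is correct: the conjecture is genuinely open, and a resolution will require either a large-scale computation or a new structural ingredient replacing the segment argument.
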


\section{Statements and Declarations}

On behalf of all authors, the corresponding author states that there is no conflict of interest.

Our manuscript has no associated data.



\end{document}